\theoremstyle{plain}
\newtheorem{theor}{Theorem}[section]
\newtheorem*{theoA}{Theorem~A}
\newtheorem*{theoA'}{Theorem~$A^\star$}
\newtheorem*{theoB}{Theorem~B}
\newtheorem{prop}[theor]{Proposition}
\newtheorem{definition}[theor]{Definition}
\newtheorem*{corA}{Corollary~A}
\newtheorem{lemma}[theor]{Lemma}
\theoremstyle{remark}
\newtheorem{rem}[theor]{Remark}
\DeclareMathOperator{\Comp}{Comp}
\DeclareMathOperator{\Incomp}{Incomp}
\DeclareMathOperator{\LCD}{LCD}
\def\R{{\mathbb R}}
\def\P{{\mathbb P}}
\def\E{{\mathbb E}}
\def\F{{\mathcal F}}
\def\N{{\mathbb N}}
\def\Z{{\mathbb Z}}
\def\K{{\mathcal K}}
\def\Net{{\mathcal N}}
\def\dist{{\rm dist}}
\def\Exp{{\mathbb E}}
\def\Vol{{\rm Vol}}
\def\supp{{\rm supp}}
\def\Diag{\mathcal D}
\def\DiagDyadic{{\mathcal D}^{2}}
\def\eps{\varepsilon}
\def\Event{{\mathcal E}}
\def\Levy{{\mathcal L}}
\def\CNumber{{\bf N}}
\begin{document}

\title{
Coverings of random ellipsoids, and invertibility of matrices with i.i.d.\ heavy-tailed entries
}
\author{Elizaveta Rebrova\thanks{E.R. was partially supported by U.S. Air Force grant F035062.}\quad
and\quad Konstantin Tikhomirov\thanks{K.T. was partially supported by PIMS
Graduate Scholarship and by Dean's Excellence Award,
Faculty of Science, UofA.}
}

\newcommand\address{\noindent\leavevmode

\medskip

\noindent
Elizaveta Rebrova\\
Department of Mathematics,\\
University of Michigan,\\
Ann Arbor, MI  48109-1043\\
\texttt{\small e-mail: erebrova@umich.edu}

\medskip

\noindent
Konstantin Tikhomirov\\
Dept.~of Math.~and Stat.~Sciences,\\
University of Alberta, \\
Edmonton, Alberta, Canada, T6G 2G1.\\
Current address: Dept.\ of Mathematics, Princeton University, Princeton NJ 08544\\
\texttt{\small e-mail: kt12@math.princeton.edu}
}

\maketitle

\begin{abstract}
Let $A=(a_{ij})$ be an $n\times n$
random matrix with i.i.d.\ entries such that $\E a_{11} = 0$ and $\E {a_{11}}^2 = 1$.
We prove that for any $\delta>0$ there is $L>0$ depending only on $\delta$,
and a subset $\Net$ of $B_2^n$ of cardinality at most $\exp(\delta n)$
such that with probability very close to one we have
$$A(B_2^n)\subset \bigcup_{y\in A(\Net)}\bigl(y+L\sqrt{n}B_2^n\bigr).$$
In fact, a stronger statement holds true. As an application, we show
that for some $L'>0$ and $u\in[0,1)$
depending only on the distribution law of $a_{11}$, the smallest singular value $s_n$
of the matrix $A$ satisfies
$\P\{s_n(A)\le \varepsilon n^{-1/2}\}\le L'\varepsilon+u^n$ for all $\varepsilon>0$.
The latter result generalizes a theorem of Rudelson and Vershynin which
was proved for random matrices with subgaussian entries.
\end{abstract}

\section{Introduction}

In this paper, we consider random matrices $A$ satisfying
\begin{equation}\tag{*}\label{probab model}
\mbox{$A$ is $n\times n$; the entries of $A$ are i.i.d., with $\Exp a_{ij}=0,\;\;\Exp{a_{ij}}^2=1$.}
\end{equation}
We are concerned with the following question:
{\it how many translates of a Euclidean ball $\sqrt{n}B_2^n$ (or its constant multiple)
are needed to cover the random ellipsoid
$A(B_2^n)$?} Being geometrically natural,
this problem, as we will see later, has an application to studying
invertibility properties of the matrix $A$.

If the entries of $A$ have a bounded fourth moment then the operator norm $\|A\|_{2\to 2}$
satisfies $\|A\|_{2\to 2}\leq L\sqrt{n}$
with probability close to one (see
\cite{YBK} and \cite{Latala} for precise statements),
whence $\P\{A(B_2^n)\subset L\sqrt{n}B_2^n\}\approx 1$.
If, moreover, the entries of $A$ are subgaussian then
for some $L>0$ depending only on the subgaussian moment we have
$\P\{A(B_2^n)\subset L\sqrt{n}B_2^n\}\geq 1-\exp(-n)$.
On the other hand, for heavy-tailed entries the operator norm of $A$
may have a higher order of magnitude compared to $\sqrt{n}$ with probability close to one,
so the trivial argument given above is not applicable.
The first main result of the paper is the following theorem:
\begin{theoA}
Let $\delta\in(0,1/4]$ and $n\geq \frac{1}{4\delta}$.
Then there is a (non-random) collection $\mathcal C$ of parallelepipeds in $\R^n$ with
$|\mathcal C|\leq \exp(13 n \delta \ln\frac{2e}{\delta})$ having the following property:
For any random matrix $A$ satisfying \eqref{probab model},
with probability at least $1-4\exp(-\delta n/8)$ we have
$$\forall \;x\in B_2^n\;\;\;\;\exists P\in\mathcal C\mbox{ such that } x\in P\mbox{ and }A(P)\subset Ax+\frac{C\sqrt{n}}{\delta}B_2^n.$$
Here, $C>0$ is a universal constant.
\end{theoA}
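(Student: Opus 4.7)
Observe first that the condition $A(P)\subset Ax+(C\sqrt{n}/\delta)B_2^n$ for any $x\in P$ is equivalent to $\sup_{\|z\|_\infty\leq 1}\|AWz\|_2\leq C\sqrt{n}/\delta$, where $W$ is the diagonal matrix of half-widths of $P$. My plan is to build $\mathcal C$ so that for every $x\in B_2^n$ there is a parallelepiped whose width vector is \emph{adapted} to $x$: wide on coordinates where $|x_j|$ is small and narrow on those where $|x_j|$ is large. The allowed budget $|\mathcal C|\leq \exp(13\delta n\log(2e/\delta))$ is of the order of $\binom{n}{\delta n}$ times $O(1)^{\delta n}$, which dictates that each parallelepiped be indexed by a subset $J\subset[n]$ of size $\leq\delta n$ together with an $O(1)^{|J|}$-sized pattern of side data on $J$.

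\textbf{Probabilistic input.} Since only two moments are assumed, $\|A\|_{2\to 2}$ may greatly exceed $\sqrt n$. The probabilistic engine of the proof should be a regularization statement: on an event of probability at least $1-4\exp(-\delta n/8)$, one can decompose $A=A_1+A_2$ with $\|A_1\|_{2\to 2}\leq C\sqrt{n/\delta}$ and $A_2$ supported on a random subset of at most $\delta n/4$ columns (and possibly $\delta n/4$ rows). The standard recipe is to truncate the entries at a level $M=M(\delta)$ chosen so that $\P(|a_{11}|>M)$ is suitably small, apply a Chernoff-type bound to localize the untruncated entries into few rows and columns, and use a classical bounded-entry operator-norm estimate on $A_1$.

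\textbf{Construction of the net and matching to $x$.} For each $J\subset[n]$ with $|J|\leq \delta n$ and each $\alpha$ from a grid of cardinality $(C'/\delta)^{|J|}$ covering $[-1,1]^J$, I would define $P_{J,\alpha}$ to be the product of intervals of half-width $\sqrt{2/(\delta n)}$ on the coordinates in $J^c$ and short intervals centered at $\alpha_j$ on each $j\in J$. The total count is $\binom{n}{\delta n}(C'/\delta)^{\delta n}\leq \exp(13\delta n\log(2e/\delta))$. Given $A$ and the decomposition above, for any $x\in B_2^n$ I would take $J=J_x\cup J_A$, where $J_x:=\{j:|x_j|>\sqrt{2/(\delta n)}\}$ has cardinality at most $\delta n/2$ by $\|x\|_2\leq 1$, and $J_A$ is the bad-column set from the regularization. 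Taking $\alpha$ as the grid point nearest $x_J$, the difference $A(y)-Ax=A_1(y-x)+A_2(y-x)$ splits into a piece controlled by $\|A_1\|_{2\to 2}$ and the $\ell_2$-size of $y-x$ (small because widths on $J^c$ are $\sqrt{2/(\delta n)}$), and a piece supported on the columns of $J_A\subset J$ that depends only on $(y-x)_J$.

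\textbf{Main obstacle.} The hardest point is bounding $A(y-x)$ on the coordinates of $J$: the operator norm of $A$ restricted to a random subset of columns of size $\delta n$ can still be enormous for heavy-tailed $A$, and a naive triangle or operator-norm estimate is too lossy, paying an unwanted factor of $\log n$ or $\sqrt n$ in the radius. The way around this, which I expect to be the main technical step, is to trade the grid spacing on $J$ against the (random) column norms through a second layer of enumeration, adding at most a factor $O(1)^{|J|}$ to $|\mathcal C|$. Calibrating this layer so that the cardinality still fits within the budget $\exp(13\delta n\log(2e/\delta))$ while preserving the failure probability $4\exp(-\delta n/8)$ is the delicate quantitative step, and is where I expect most of the work to concentrate.
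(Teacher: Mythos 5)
Your overall template — parallelepipeds that are wide on most coordinates and narrow on a small ``bad'' set $J$, with $|\mathcal C|$ of order $\binom{n}{\delta n}(C/\delta)^{\delta n}$ — is the right shape, and it does match the cardinality in the statement. But two of the load-bearing steps do not go through as stated.

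First, the proposed probabilistic input fails under only two moments. You want $A=A_1+A_2$ with $\|A_1\|_{2\to 2}\le C\sqrt{n/\delta}$ on an event of probability $1-4\exp(-\delta n/8)$, with $A_2$ supported on $\le\delta n/4$ columns. If you truncate at a level $M$ chosen so that $\P\{|a_{11}|>M\}\le\delta/n$ (which you must do to make the large entries fit into $\delta n$ columns), Chebyshev forces $M\gtrsim\sqrt{n/\delta}$, and then the bounded-entry piece $A_1$ is far from subgaussian with a constant subgaussian parameter: the concentration of $\|A_1-\E A_1\|_{2\to 2}$ around its mean at scale $\sqrt{n/\delta}$ has failure probability of order $\exp(-c\,n/(\delta M^2))=\exp(-c)$, a constant, not $\exp(-\delta n/8)$. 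Conversely, if $M=M(\delta)$ is a constant, then the entries exceeding $M$ are not localized: a positive proportion of all $n$ columns contain such an entry, so $A_2$ cannot be supported on $\delta n$ columns. In short, there is no truncation level that simultaneously (i) localizes the heavy tail, (ii) gives an $O(\sqrt{n/\delta})$ operator-norm bound for the rest, and (iii) holds with exponential probability.

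Second, the step you yourself flag as the ``main obstacle'' — controlling $A$ on the columns in $J$ — is exactly where the new idea is needed, and the paper's resolution is materially different from a ``second layer of enumeration'' on the grid. The paper never works with $\|\cdot\|_{2\to 2}$ at all: it constructs a \emph{random diagonal contraction} $D$ with entries in $\{1\}\cup\{2^{-2^k}\}_{k\ge0}$ such that \emph{every row} of $AD$ has Euclidean norm $\lesssim\sqrt{n/\delta}$ deterministically, while $\E(\det D)^{-1/2}\le\exp(\delta n)$ (Proposition~\ref{bounded rows and det}, via the coupling Lemma~\ref{majorant lemma}). The row-norm control is then upgraded to an $\|\cdot\|_{\infty\to 2}$ bound — not an $\|\cdot\|_{2\to 2}$ bound — by a symmetrization-plus-union-bound over the $2^n$ vertices of $B_\infty^n$ (Propositions~\ref{parallelepipeds sym}, \ref{permutation model}, and the permutation de-symmetrization in Theorem~\ref{parallelepiped norm estimate}). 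The determinant condition $\det D\ge\exp(-\delta n)$ is what caps the volume cost at $\exp(\delta n)$, and the dyadic multi-scale values of $D$ (rather than a binary in-$J$/out-of-$J$ split) are what allow the cardinality count in Lemma~\ref{quantity}. Your binary split loses both the multi-scale shrinkage and the crucial switch from $\ell_2\to\ell_2$ to $\ell_\infty\to\ell_2$, which is what makes exponential probability attainable with only second moments. So the skeleton is right, but the two mechanisms you would need to fill the gap are precisely the novel content of Sections~2 and~3, and neither appears in the proposal.
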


In particular, the above theorem implies the following more elegant 
\begin{corA}
For any $\delta\in(0,1/4]$ and $n\geq \frac{1}{4\delta}$
there exists a non-random subset $\Net\subset B_2^n$
of cardinality at most $\exp(13 n \delta \ln\frac{2e}{\delta})$ such that for any $n\times n$ matrix $A$
satisfying \eqref{probab model}, we have
$$\P\Big\{A(B_2^n)\subset \bigcup_{y\in A(\Net)}\bigl(y+\frac{C'\sqrt{n}}{\delta}B_2^n\bigr)\Big\}\geq 1-4\exp(-\delta n/8)$$
for some universal constant $C'>0$.
\end{corA}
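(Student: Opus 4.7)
The plan is to deduce Corollary~A from Theorem~A essentially for free, by extracting a single point out of each parallelepiped in the collection $\mathcal{C}$ supplied by the theorem. Concretely, for every $P\in\mathcal{C}$ with $P\cap B_2^n\neq\emptyset$, I would fix an arbitrary ``anchor'' $y_P\in P\cap B_2^n$, and then set
\[
\Net := \{\,y_P : P\in\mathcal{C},\ P\cap B_2^n\neq\emptyset\,\} \subset B_2^n.
\]
Since $\mathcal{C}$ is non-random and depends only on $\delta$ and $n$, so is $\Net$, and one gets the immediate cardinality bound $|\Net|\leq |\mathcal{C}|\leq \exp(13n\delta\ln(2e/\delta))$ demanded by the corollary.

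Next I would condition on the high-probability event furnished by Theorem~A and show that the required covering then holds deterministically. Fix any $x\in B_2^n$. The theorem produces some $P=P(x)\in\mathcal{C}$ with $x\in P$ and $A(P)\subset Ax+(C\sqrt n/\delta)B_2^n$. Because $x\in P\cap B_2^n$, this $P$ contributed an anchor $y_P\in\Net$ with $y_P\in P$; applying $A$ to $y_P$ and using the displayed inclusion yields $Ay_P\in Ax+(C\sqrt n/\delta)B_2^n$, i.e.\ $\|Ax-Ay_P\|_2\leq C\sqrt n/\delta$, and hence equivalently $Ax\in Ay_P+(C\sqrt n/\delta)B_2^n$. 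Thus every point of $A(B_2^n)$ lies within $C\sqrt n/\delta$ of some point of $A(\Net)$ on this event, so the covering claim holds with $C'=C$, and the probability estimate transfers verbatim from Theorem~A.

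In terms of difficulty, there is essentially no obstacle beyond what Theorem~A already handles: all analytic content (the construction of $\mathcal{C}$, the covering property, and the probability bound) is absorbed in the theorem, and the corollary is a purely set-theoretic compression of a parallelepiped cover to a point cover. The single observation driving the argument is that membership of both $x$ and $y_P$ in one parallelepiped $P$ forces $Ax$ and $Ay_P$ to lie in a common $\ell_2$-ball of radius $C\sqrt n/\delta$, which is symmetric in the two points and hence passes from ``$Ay_P$ close to $Ax$'' to ``$Ax$ close to $Ay_P$'' at no cost.
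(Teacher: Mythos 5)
Your proposal is correct and follows essentially the same route as the paper: pick one anchor $y_P\in P\cap B_2^n$ for each parallelepiped in the collection $\mathcal C$ from Theorem~A (the paper implicitly assumes each $P$ meets $B_2^n$, whereas you spell out the restriction to those that do), and then read off the covering and probability bound directly on the event of Theorem~A. The only thing worth noting is that the paper's version of this argument is equally terse, so nothing is missing.
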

Both results have geometric interpretation in terms of covering numbers. Recall that for two subsets $S$ and $K$ of a vector space the \emph{covering number}
$\CNumber(S,K)$ is defined as the smallest number of parallel translates of $K$ sufficient to cover $S$. By Theorem~A, $\CNumber(A(B_2^n), \frac{C\sqrt{n}}{\delta} B_2^n)\leq \exp\bigl(13\delta n\ln\frac{2e}{\delta}\bigr)$
with probability at least $1-4\exp(-\delta n/8)$. 

Another interpretation of these results, that will be of use for us, is related to the net refinement (see Theorem~A* in Section~5).
Given a metric space $X$, an $\varepsilon$\emph{-net} $\Net$ on $X$ is a subset of $X$ such that any point of $X$ is within distance at
most $\varepsilon$ from a point of $\Net$.
It is easy to see that with probability at least $1-4\exp(-\delta n/8)$ the set $\Net$ from Corollary~A is a $\frac{C\sqrt{n}}{\delta}$--net
on $B_2^n$ with respect to the pseudometric $d(x,y):=\|A(x-y)\|$ ($x,y\in B_2^n)$. Here and further, $\|\cdot\|$ denotes the
standard Euclidean norm in $\R^n$.

A crucial feature of these results is that the set $\mathcal C$ in the theorem is non-random.
Moreover, $\mathcal C$ (as well as the set $\Net$ from Corollary~A)
provides a ``universal'' covering which is independent of the distribution of the entries of $A$. 

Finally, compared to Corollary~A, the statement of Theorem~A is more flexible as it enables us
to choose the ``anchor'' points within the parallelepipeds when constructing corresponding $\varepsilon$-net
(this matter is covered in detail at the beginning of Section~\ref{ssv section}).

\bigskip 

Let us briefly describe the main idea of the proof.
The collection $\mathcal C$ of parallelepipeds is constructed using a special subset
${\mathcal D}$ of diagonal operators with diagonal elements
in the interval $(0,1]$. Namely, we define ${\mathcal D}$
as the set of all diagonal operators with diagonal entries in $\{1\}\cup\{2^{-2^k}\}_{k=0}^\infty$
and with determinants bounded from below by $\exp(-\delta n)$.
Then, for every operator $D$ from ${\mathcal D}$, we take a covering of the ball $B_2^n$
by appropriate translates of parallelepiped $D(L'' n^{-1/2} B_\infty^n)$
(for some $L''=L''(\delta)$),
and let $\mathcal C$ be the union of such coverings over ${\mathcal D}$.
It turns out that Theorem~A follows almost immediately from the following relation:
\begin{equation}\label{intro bound of infty to 2}
\begin{split}
\P\Bigl\{&\exists\mbox{ diagonal matrix }D\mbox{ with diagonal entries in }\{1\}\cup\{2^{-2^k}\}_{k=0}^\infty
\mbox{ such that}\\
&\det D\ge\exp(-\delta n)\mbox{ and }\|AD\|_{\infty\to 2}\le \frac{Cn}{\sqrt{\delta}}\Bigr\}\ge 1-4\exp(-\delta n/8).
\end{split}
\end{equation}
In Section~\ref{parallel section}, we show that \eqref{intro bound of infty to 2}
holds true under condition \eqref{probab model}; see Theorem~\ref{parallelepiped norm estimate}.
Geometrically, this property means that
it is possible to construct a random parallelepiped $P\subset[-1,1]^n$ with sides parallel to
the standard coordinate axes, such that $\Vol (P)\ge \exp(-\delta n)$
and $A$ maps $P$ inside the Euclidean ball $\frac{Cn}{\sqrt{\delta}}B_2^n$ with probability at least $1-4\exp(-\delta n/8)$.
Note that parallelepiped $P$ will be ``narrow'' along directions $w\in S^{n-1}$ for which $\|Aw\|$ is large.

\bigskip

As we already mentioned above, Theorem~A has a direct application to the problem
of obtaining quantitative (non-asymptotic) estimates for the smallest singular value of $A$.
Recall that, given an $m\times n$ ($m\ge n$) matrix $M$, its smallest singular value can be defined as
$s_{n}(M)=\inf\limits_{y\in S^{n-1}}\|My\|$. An argument based on Theorem~A and results of Rudelson
and Vershynin from \cite{RV square, RV rectangular}, yields:
\begin{theoB}
For any $\widetilde v\in(0,1]$ and $\widetilde u\in(0,1)$ there are
numbers $L>0$, $u\in(0,1)$ and $n_0\in\N$ depending only on $\widetilde v$ and $\widetilde u$ with the following property:
Let $n\ge n_0$ and let $A=(a_{ij})$ be an $n\times n$ random matrix satisfying \eqref{probab model},
such that $\sup\limits_{\lambda\in\R}\P\{|a_{11}-\lambda|\le \widetilde v\}\le \widetilde u$.
Then for any $\varepsilon>0$ we have 
$$
\P\bigl\{s_n(A)\le\varepsilon n^{-1/2}\bigr\} \le L \varepsilon + u^n.
$$
\end{theoB}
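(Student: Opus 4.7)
The plan is to adapt to the heavy-tailed regime the \emph{invertibility via distance} framework of Rudelson and Vershynin \cite{RV square, RV rectangular}. I would partition the unit sphere as $S^{n-1}=\Comp(\rho,\eta)\cup\Incomp(\rho,\eta)$ for small parameters $\rho,\eta$ depending only on $\widetilde v$ and $\widetilde u$, where $\Comp(\rho,\eta)$ consists of unit vectors within Euclidean distance $\eta$ of some $\rho n$-sparse unit vector, and bound the contributions of the two sets to $s_n(A)=\inf_{x\in S^{n-1}}\|Ax\|$ separately. In the subgaussian version of this program the operator-norm bound $\|A\|\le K\sqrt n$ is essential in the compressible step; in our heavy-tailed setting it is unavailable, and this is exactly where Theorem~A enters.

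For the incompressible part, no new ingredient is needed beyond \cite{RV square, RV rectangular}. The standard deterministic reduction
$$
\inf_{x\in\Incomp(\rho,\eta)}\|Ax\|\ge\frac{\rho}{\sqrt n}\min_{k\le n}\dist(A_k,H_k),
$$
where $A_k$ is the $k$th column and $H_k$ the span of the remaining columns, reduces matters to lower-bounding each $\dist(A_k,H_k)=|\langle A_k,u_k\rangle|$, for $u_k$ a unit normal to $H_k$ independent of $A_k$. Applying the Rudelson--Vershynin machinery to the $(n-1)\times n$ submatrix with $A_k$ removed, one shows that with probability at least $1-u^n$ the vector $u_k$ is incompressible and has exponentially large $\LCD$; the Littlewood--Offord-type anti-concentration inequality then gives $\P\{\dist(A_k,H_k)\le\varepsilon\}\le C\varepsilon+u^n$. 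Summation over $k\le n$ produces the required bound. This step uses only the small-ball hypothesis $\sup_\lambda\P\{|a_{11}-\lambda|\le\widetilde v\}\le\widetilde u$ and is insensitive to the tails of $a_{11}$.

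For the compressible part I would establish $\P\{\inf_{x\in\Comp(\rho,\eta)}\|Ax\|\le c\sqrt n\}\le u^n$ by combining Theorem~A with a tensorized small-ball estimate. Apply Theorem~A with a small $\delta=\delta(\widetilde v,\widetilde u)$ to obtain a non-random family $\mathcal C$ of at most $\exp(13\delta n\ln(2e/\delta))$ parallelepipeds covering $B_2^n$; exploiting the freedom to choose the anchor inside each $P\in\mathcal C$, select $x_P\in P\cap\Comp(\rho,\eta)$ whenever this intersection is non-empty. With probability at least $1-4e^{-\delta n/8}$, every $x\in P$ then satisfies
$$
\|Ax\|\ge\|Ax_P\|-\frac{C\sqrt n}{\delta}.
$$
Tensorization of the small-ball hypothesis yields $\|Ax_P\|\ge c_1\sqrt n$ with probability at least $1-u_1^n$ for any single anchor $x_P\in\Comp(\rho,\eta)$, with constants $c_1>0$ and $u_1\in(0,1)$ depending only on $\widetilde v,\widetilde u$. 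A union bound over $\mathcal C$, together with a choice of $\delta$ small enough that both $C/\delta\le c_1/2$ and $|\mathcal C|\cdot u_1^n\le u_1^{n/2}$, then completes the compressible estimate; combining it with the incompressible bound yields Theorem~B.

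The hard part is the balancing of parameters in the compressible step. The ``fluctuation'' $C\sqrt n/\delta$ produced by Theorem~A must be strictly dominated by the one-point lower bound $c_1\sqrt n$ on $\|Ax_P\|$, which forces $\delta$ to be not too small; simultaneously, the cardinality $\exp(13\delta n\ln(2e/\delta))$ of $\mathcal C$ must be overpowered by $u_1^n$, which forces $\delta$ to be small. Showing that these two constraints are compatible for appropriate $\rho,\eta$, and that the anchors $x_P\in P\cap\Comp(\rho,\eta)$ can be selected in a measurable non-random way, is the heart of the argument.
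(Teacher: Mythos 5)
The compressible-part strategy you sketch matches the paper's Proposition on compressible vectors: refine a Euclidean net on $\Comp$ via Theorem~A (in the form of Theorem~$A^\star$), union-bound against a tensorized small-ball estimate, and tune $\delta$ so that the fluctuation $C\sqrt n/\delta$ is dominated by the one-point lower bound while $|\mathcal C|$ is overpowered by the per-point probability. That part is correct.

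The gap is in your treatment of the incompressible part. You assert that ``no new ingredient is needed beyond \cite{RV square, RV rectangular}'' and that the step establishing that the random normal $u_k$ has exponentially large $\LCD$ ``is insensitive to the tails of $a_{11}$.'' This is not so: the Rudelson--Vershynin proof that the random normal has large $\LCD$ proceeds by covering each level set $S_k=\{x\in\Incomp:\, k\le \LCD(x)<2k\}$ by a Euclidean net $\Net_k$ and passing from a per-point bound on $\|A'y\|$, $y\in\Net_k$, to a uniform one over $S_k$ via
$\inf_{x\in S_k}\|A'x\|\ge\inf_{y\in \Net_k}\|A'y\|-\varepsilon\|A'\|_{2\to 2}$.
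This requires the operator-norm bound $\|A'\|_{2\to 2}\lesssim\sqrt n$, which is exactly what fails under \eqref{probab model} without subgaussian tails. The paper's Lemma~\ref{net on level sets} repairs this by applying Theorem~$A^\star$ to each level-set net $\Net_k$ --- i.e., Theorem~A is needed in the incompressible (LCD) step just as much as in the compressible step, and is used there through Theorem~\ref{random normal}. Without this refinement your argument for the large-$\LCD$ claim breaks down; the only piece of the incompressible analysis that is truly tail-insensitive is Proposition~\ref{invertibility via distance} and the anti-concentration estimate (Theorem~\ref{small ball probability}) given the $\LCD$ bound. (A minor separate issue: the deterministic inequality $\inf_{x\in\Incomp}\|Ax\|\ge\frac{\rho}{\sqrt n}\min_k\dist(A_k,H_k)$ you write is not correct as stated; the incompressibility gives spread coordinates only on a positive-density subset of indices, and the paper uses the averaged probabilistic version of this reduction.)
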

Note that any random variable $\alpha$ with $\Exp\alpha=0$ and $\Exp \alpha^2=1$
obviously satisfies 
$\sup\limits_{\lambda\in\R}\P\{|\alpha-\lambda|\le \widetilde v\}\le \widetilde u$ for some
$\widetilde v>0$ and $\widetilde u\in(0,1)$ determined by the law of $\alpha$. Thus, the above statement does not require any additional assumptions on the matrix apart from
\eqref{probab model};
by introducing the quantities $\widetilde v$ and $\widetilde u$ we make the dependence of $L$ and $u$ on
the law of $a_{11}$ more explicit.

Let us put Theorem~B in the context of known results.

Convergence of (appropriately normalized) smallest singular values for a sequence of random rectangular matrices
with i.i.d.\ entries and growing dimensions was established by Bai and Yin \cite{BY}
(see also \cite{T limit}, where the result is proved under optimal moment assumptions).
For non-asymptotic results in this direction,
we refer the reader to papers \cite{LPRT, RV rectangular} for the case of i.i.d.\ entries
(see also \cite{Tikhomirov} where no moment conditions are assumed);
\cite{ALPT, ALPT2} for log-concave distributions of rows
and \cite{SV, MP, KM, Yas14, GLPT} for more general isotropic distributions.
We refer to surveys \cite{RV congress, Vershynin} (see also \cite{Rudelson})
for more information.

For random square matrices with independent standard Gaussian entries,
the limiting distribution of the smallest singular value was computed by Edelman \cite{Edelman};
universality of this result was established in \cite{TV distribution of smin}.
Further, for matrices with i.i.d.\ entries
it was shown in \cite{Tao Vu Circular law}
and \cite{TV smooth analysis} that, given any $K>0$ there are $R,L>0$ depending only on $K$ and the law of $a_{11}$
such that $\P\{s_n(A+B)\le n^{-L}\}\le Rn^{-K}$ for any non-random matrix $B$ satisfying $\|B\|_{2\to 2}\le n^K$
(we note that analogous results were recently obtained for more general
models of randomness allowing some dependence between the entries of $A$;
see, in particular, \cite{NR} and \cite{GNT}).
In the case $B={\bf 0}$ which we study in this paper, those papers do not provide
optimal estimates for $s_n(A)$. A much more precise statement was proved in \cite{RV square}
under the additional assumption that the entries of $A$ are subgaussian; namely,
Rudelson and Vershynin showed that
$s_n(A)$ satisfies a small ball probability estimate
$$\P\{s_n(A)\le \varepsilon n^{-1/2}\}\le L\varepsilon+u^n,\;\;\varepsilon>0,$$
where $L>0$ and $u\in(0,1)$ depend only on the subgaussian moment of $a_{ij}$'s. Note that Theorem~B gives an estimate of exactly the same form, but for the matrices with heavy-tailed entries.

\bigskip

The idea of the proof of Theorem~B can be described as follows. Denote by $A'$ the transpose of the first $n-1$ columns of $A$. A principal component of the proof of \cite{RV square} is an analysis of the arithmetic structure
of null vectors of $A'$,
which is described with the help of the notion of the least common denominator (LCD).
To show that null vectors of $A'$ typically have an exponentially large LCD,
the authors of \cite{RV square}
consider subsets $S$ of the unit sphere corresponding to vectors with small LCD,
and show that $\inf\limits_{x\in S}\|A'x\|>0$ with a large probability.
For this, they use the standard $\varepsilon$-net argument,
when the infimum is estimated by
taking a Euclidean $\varepsilon$-net $\Net$ on $S$ and applying relation
$\inf\limits_{x\in S}\|A'x\|\ge\inf\limits_{y\in \Net}\|A'y\|-\varepsilon\|A'\|_{2\to 2}$
together with the estimate $\|A'\|_{2\to 2}\leq C\sqrt{n}$ which holds with probability
very close to one under the subgaussian
moment assumptions on the entries.
In our setting, the principal difficulty consists in the fact that the condition \eqref{probab model} does
not guarantee a good upper bound for the operator norm $\|A'\|_{2\to 2}$. To deal with this fundamental issue,
we ``refine'' the nets constructed in \cite{RV square} by applying Theorem~A.
Indeed, it can be shown that Theorem~A implies that, given an $\varepsilon$-net $\Net$ on
$S$, it is possible to construct a subset $\widetilde \Net\subset S$ of cardinality
at most $\exp\bigl(13\delta n\ln\frac{2e}{\delta}\bigr)|\Net|$ which is an $L'\varepsilon \sqrt{n}$-net
on $S$ (for some $L'=L'(\delta)$) with respect to the pseudometric $d(x,y) = \|A'(x-y)\|$
with probability at least $1-4\exp(-\delta n/8)$. Then,
$\inf\limits_{x\in S}\|A'x\|\ge\inf\limits_{y\in \widetilde\Net}\|A'y\|-L'\varepsilon\sqrt{n}$,
so the argument does not depend any more on the value of $\|A'\|_{2\to 2}$.

\bigskip

The paper is organized as follows:
Sections~\ref{vector section} and~\ref{parallel section} are devoted to proving the main novel element of the paper
--- Theorem~A. Then, in Section~\ref{sec prelim}, we collect
some results from \cite{RV square}, and, in Section~\ref{ssv section}, prove Theorem~B.

Finally, let us discuss notation.
Given a finite set $S$, by $|S|$ we denote its cardinality.
By $e_1,e_2,\dots,e_n$ we denote the canonical basis in $\R^n$.
The standard inner product in $\R^n$
shall be denoted by $\langle\cdot,\cdot\rangle$.
Given $p\in[1,\infty]$, $\|\cdot\|_p$ is the standard $\ell_p$-norm.
For $\ell_2$, will will simply write $\|\cdot\|$.
Given an $m\times n$ matrix $M$ and $p,q\in[1,\infty]$, by $\|M\|_{p\to q}$ we shall denote the operator norm of $M$
considered as the mapping from $(\R^n,\|\cdot\|_p)$ to $(\R^m,\|\cdot\|_q)$.
Universal positive constants shall be denoted by $C,c$. Sometimes, to avoid confusion, we shall add a
numerical subscript to the name of a constant or function defined within a statement.

\section{Fitting a random vector into an $\ell_p^n$-ball}\label{vector section}

Throughout the paper, by $\Diag_n$ we denote the set of all $n\times n$ diagonal matrices with
diagonal elements belonging to the interval $(0,1]$ (we will sometimes refer to such matrices
as positive diagonal contractions). Further,
denote by $\DiagDyadic_n$ the set of all $n\times n$ positive diagonal contractions whose diagonal entries belong to the set
$\{1\}\cup\{2^{-2^{k}}\}_{k=0}^\infty$.
The set $\DiagDyadic_n$ can be regarded as a discretization of $\Diag_n$.
 
 \bigskip
 
In this section, we consider the following problem:
Let $X$ be a random vector in $\R^n$ with i.i.d.\ coordinates.
We want to find a random diagonal operator $D$ taking values in $\Diag_n$ such that
$D(X)$ is contained in an appropriate (fixed) multiple of the $\ell_p^n$-ball {\it everywhere} on the probability space
and at the same time the determinant of $D$ is typically ``not too small''.
The statement to be proved is
\begin{prop}\label{vector theorem}
For any $\alpha\in(0,1)$ there is a number $L=L(\alpha)>0$ with the
following property:
Let $\delta\in(0,1]$, $p\in[1,\infty)$ and let $X=(x_1,x_2,\dots,x_n)$ be a random vector on $(\Omega,\Sigma,\P)$ with
i.i.d.\ coordinates such that $\Exp |x_i|^p<\infty$.
Then there is a random positive diagonal contraction $D$ taking values in $\Diag_n$ such that
$$\|DX\|_p^p\le \frac{L}{\delta}\Exp\|X\|_p^p\;\;\mbox{everywhere
on the probability space, and}\;\;\Exp(\det D)^{p\alpha-p}\le\exp(\delta).$$
\end{prop}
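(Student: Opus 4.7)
The plan is to construct $D$ by an explicit water-filling procedure on $X$, and to bound the inverse-determinant moment via AM--GM together with Markov's and Jensen's inequalities. Concretely, set $K := (L/\delta)\,\E\|X\|_p^p$ with $L=L(\alpha)$ to be determined. For each realisation of $X$ let $T = T(X) \in (0,\infty]$ be the unique threshold satisfying $\sum_{i=1}^n \min(|x_i|^p, T^p) = \min(K,\|X\|_p^p)$, and set $d_i := \min(1, T/|x_i|) \in (0,1]$. On the good event $\{\|X\|_p^p \le K\}$ we have $T=\infty$ and $D=I$, while on the bad event $T<\infty$ is the optimal uniform shrinking threshold which saturates $\|DX\|_p^p=K$. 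The norm bound $\|DX\|_p^p \le (L/\delta)\,\E\|X\|_p^p$ then holds everywhere on the probability space, directly from $\sum\min(|x_i|^p,T^p) \le K$.

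For the determinant, on the good event $(\det D)^{p\alpha-p}=1$, while on the bad event, with $N := \#\{i:|x_i|^p > T^p\}$, one has $(\det D)^{p\alpha-p} = \prod_{|x_i|^p > T^p}(|x_i|^p/T^p)^{1-\alpha}$. Applying AM--GM to the $N$ heavy ratios $|x_i|^p/T^p \ge 1$, together with the identity $NT^p + \sum_{|x_i|^p \le T^p}|x_i|^p = K$ (so $NT^p \le K$), gives $(\det D)^{p\alpha-p} \le (1 + (\|X\|_p^p - K)/(NT^p))^{N(1-\alpha)}$. Combined with the Markov estimate $\P(\|X\|_p^p>K) \le \delta/L$ and Jensen's inequality in the form $\E\|X\|_p^{p\gamma} \le (n\,\E|x_1|^p)^\gamma$ for $\gamma\in[0,1]$, the aim is to show that the expectation of this bound on the bad event is $O(\delta)$, whence $\E(\det D)^{p\alpha-p} \le 1 + O(\delta) \le e^\delta$ for $L$ large enough depending only on $\alpha$.

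The main obstacle will be actually bounding the expectation of $(1+(\|X\|_p^p-K)/(NT^p))^{N(1-\alpha)}$ on the bad event using only the hypothesis $\E|x_1|^p < \infty$. The naive relaxation $(1+u/N)^N \le e^u$ combined with the deterministic lower bound $T^p \ge K/n$ would require controlling the moment generating function of $\|X\|_p^p$, which need not exist here. The fix must exploit that on the bad event $T^p$ is typically much larger than $K/n$: since $NT^p \le K$, whenever the number $N$ of heavy coordinates is small the threshold $T^p$ is comparable to $K/N$, and conversely $N$ can only be large if $\|X\|_p^p$ is spread across many coordinates. A case split on $N$ (equivalently, on the rank at which $T^p$ lands in the sorted sequence $|x_{(1)}|^p\ge\dots\ge|x_{(n)}|^p$), combined with the first-order Jensen bound above, is what should avoid the MGF requirement and produce an $L$ depending only on $\alpha$.
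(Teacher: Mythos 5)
Your water-filling construction is a genuinely different approach from the paper's, and the norm bound $\|DX\|_p^p \le K$ goes through cleanly. The paper instead builds $D$ as an infinite product $\prod_k D^{(k)}$, one factor per dyadic level $\tau_k$ of $|x_1|^p$ (after first coupling $Y=(|x_i|^p)_i$ to a dominating vector $Z$ built from independent two-valued variables). The crucial structural advantage of that construction is that $\det D^{(k)}$ depends only on the binomial count $\nu_k = |\{i : \xi_i^k \ne 0\}|$, so $\E(\det D)^{\alpha-1} = \prod_k \E\eta_k^{1-\alpha}$ factors over levels and each factor is bounded via binomial tail estimates, uniformly in $n$. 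Your single global threshold $T$ destroys this independence: the factor $\prod_{\text{heavy}}(|x_i|^p/T^p)^{1-\alpha}$ couples all coordinates through $T$, and this is exactly where the argument stalls.

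The gap is real and you have correctly located it, but I do not think the proposed case split on $N$ closes it with only $\E|x_1|^p<\infty$. Two concrete obstructions. First, the naive relaxation you flagged, $(1+u/(NT^p))^{N(1-\alpha)} \le \exp((1-\alpha)u/T^p)$ combined with $T^p \ge K/n$, indeed requires an exponential moment of $\|X\|_p^p$, which is unavailable. Second, the finer route of conditioning on $N=m$ and on which index set $I$ of size $m$ is heavy (so that $T^p=(K-\sum_{j\notin I}|x_j|^p)/m$ is a function of the light coordinates and the heavy ones become conditionally independent) yields, after integrating out the heavy coordinates, a bound of the form $\binom{n}{m}\,\E\bigl[(\mu/T^p)^m\,\mathbf 1(\cdots)\bigr]$ with $\mu=\E|x_1|^p$. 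On the typical part of the event where $T^p \asymp K/m$ this sums nicely over $m$; but on the part where $T^p$ is near its worst-case floor $K/n$, the only uniform bound available is $(\mu/T^p)^m \le (n\mu/K)^m=(\delta/L)^m$, and after multiplying by $\binom{n}{m}$ the $m=1$ term already carries an unavoidable factor of $n$. The tail estimate $\P\{T^p<t\}\le nt/K$ is only linear in $t$, which is not fast enough to absorb this. In other words, the event that $T^p$ is small (many medium-sized coordinates) is rare but not rare enough for a bound depending only on $\alpha$.

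So to complete your argument you would need either a substantially better lower tail for $T^p$ than Markov gives under a bare $p$-th moment assumption, or some decoupling of the heavy coordinates from $T^p$ that I do not see how to extract from a single threshold. The paper's multilevel decomposition (and the coupling Lemma that reduces to two-valued coordinates) is precisely engineered to replace this single hard-to-control threshold by a family of independent binomial counts, one per dyadic scale, each of which has subgaussian-type tails. If you want to salvage your approach, I would suggest trying a multi-threshold version of water-filling: apply a separate threshold at each dyadic quantile of $|x_1|^p$, so that the determinant again factors over levels. At that point you are essentially rediscovering the paper's construction. As written, the proposal is an honest sketch with a correctly identified obstruction, but the critical moment estimate is not established.
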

\begin{rem}
Proposition~\ref{vector theorem} is a foundation block of our paper. In Section~\ref{parallel section},
we will amplify this result (the case $p=2$) by proving its ``matrix version'' (Theorem~\ref{parallelepiped norm estimate}).
The case $p\neq 2$ in this section is considered just for completeness.
\end{rem}
\begin{rem}
Note that a trivial definition of the diagonal operator $D=(d_{ij})$ by setting
$${d_{jj}}^p:=\min\Bigl(1,\frac{L}{\delta}\frac{\Exp\|X\|_p^p}{\|X\|_p^p}\Bigr),\;\;j=1,2,\dots,n,$$
gives an unsatisfactory distribution of the determinant.
For example, if the entries of $X$ are $\{0,1\}$-valued with probability of taking value $1$ equal to $1/n$,
then $\Exp\|X\|_p^p=1$, and for any $m\le n$ we have
$$\P\{\|X\|_p^p= m\}={n\choose m}n^{-m}\Bigl(1-\frac{1}{n}\Bigr)^{n-m}
\ge \frac{1}{4m^m}.$$
Thus, the above definition of $D$ would give $\P\{\det D\le 2^{-n}\}
\ge \frac{1}{4}\lceil 2^pL/\delta\rceil^{-\lceil 2^pL/\delta\rceil}$.

\medskip

Our construction of the required operator is more elaborate.
Let us first describe the idea informally. 
Assume that $p=1$ and that $X$ is our random vector with non-negative i.i.d.\ coordinates with unit expectations.
We consider a sequence of non-negative numbers ({\it levels}) such that
each coordinate exceeds $k$-th level with probability $2^{-k}$.
The main observation is that $X$ ``does not fit'' into the $\ell_1^n$-ball $\frac{Ln}{\delta}B_1^n$ only if for some $k$ there are
much more than $2^{-k}n$ coordinates of $X$ exceeding the level. We define
the required operator $D$ so that its restriction to the ``bad'' coordinates is an appropriate
dilation, while on all other coordinates it acts isometrically.
If there exist several ``bad'' levels the operator $D$ will be defined as a product of several diagonal operators. Moreover,
it will be more convenient to ``replace''
the vector $X$ by a sum of independent vectors of two-valued variables, such that the sum is
a majorant for $X$ on the entire probability space. We construct the majorant in the
coupling Lemma~\ref{majorant lemma} stated below.
\end{rem}

Given a non-negative random variable $\xi$ with an everywhere continuous
cumulative distribution function (in particular, $\P\{\xi=0\}=0$), define numbers $\tau_k(\xi)$ {\it (levels)} as
$$\tau_k(\xi):=\inf\bigl\{\tau\ge 0:\P\{\xi\ge\tau\}= 2^{-k}\bigr\},\;\;k\ge 0.$$
Note that
\begin{equation}\label{expectation in terms of tau}
\Exp\xi\ge\sum\limits_{k=0}^\infty 2^{-k-1}\tau_k(\xi).
\end{equation}

We will need the following standard fact:
\begin{lemma}[{see, for example, \cite[Chapter~1, Theorem~3.1]{Thorisson}}]\label{coupling fact}
Let $\xi_1,\xi_2$ be two random variables on a probability space $(\Omega,\Sigma,\P)$, and assume that
$\P\{\xi_1> t\}\geq \P\{\xi_2>t\}$ for all $t\in\R$ (that is, $\xi_2$ is stochastically dominated by $\xi_1$).
Then there is a probability space $(\widetilde\Omega,\widetilde\Sigma,\widetilde\P)$ and random variables
$\widetilde\xi_1,\widetilde\xi_2$ on $(\widetilde\Omega,\widetilde\Sigma,\widetilde\P)$
such that 1) $\widetilde \xi_i$ is equidistributed with $\xi_i$, $i=1,2$, and 2) $\widetilde \xi_1\geq \widetilde \xi_2$
everywhere on $\widetilde\Omega$.
\end{lemma}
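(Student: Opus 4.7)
The plan is to use the quantile coupling, a standard device that realises both variables as monotone functions of a single uniform random variable on the unit interval. Specifically, let $F_i(t):=\P\{\xi_i\le t\}$ denote the cumulative distribution function of $\xi_i$ for $i=1,2$, and define the generalised inverse
\[
F_i^{-1}(u):=\inf\{t\in\R:F_i(t)\ge u\},\qquad u\in(0,1).
\]
I would take $(\widetilde\Omega,\widetilde\Sigma,\widetilde\P)$ to be $[0,1]$ equipped with the Borel $\sigma$-algebra and Lebesgue measure, let $U$ be the identity map on $[0,1]$ (a uniform random variable), and set $\widetilde\xi_i:=F_i^{-1}(U)$ for $i=1,2$.

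The first key step is to verify conclusion (1), namely that each $\widetilde\xi_i$ is equidistributed with $\xi_i$. This is the textbook fact that, since $F_i$ is right-continuous and non-decreasing, one has the equivalence $\{F_i^{-1}(U)\le t\}=\{U\le F_i(t)\}$ up to a null set, and therefore $\widetilde\P\{\widetilde\xi_i\le t\}=F_i(t)$. The second key step is to use the stochastic dominance hypothesis to obtain conclusion (2). From $\P\{\xi_1>t\}\ge\P\{\xi_2>t\}$ one gets $F_1(t)\le F_2(t)$ for every real $t$. Consequently, for every $u\in(0,1)$ the set $\{t:F_2(t)\ge u\}$ contains the set $\{t:F_1(t)\ge u\}$, so taking infima yields $F_2^{-1}(u)\le F_1^{-1}(u)$. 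Applying this to $u=U(\widetilde\omega)$ shows $\widetilde\xi_2\le\widetilde\xi_1$ pointwise on $\widetilde\Omega$.

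I do not anticipate any substantial obstacle: the whole argument rests on the order-reversing property of the generalised inverse, which follows in one line from its definition. The only point requiring mild care is tracking the direction of the inequality when passing from $\P\{\xi_1>t\}\ge\P\{\xi_2>t\}$ to a comparison between $F_1^{-1}$ and $F_2^{-1}$, since two sign reversals are involved (complementation of the survival function, and the order-reversal of the generalised inverse). Beyond that, no measurability subtleties arise because $F_i^{-1}$ is a Borel-measurable monotone function, so the pushforward of Lebesgue measure under $\widetilde\xi_i$ is automatically well defined.
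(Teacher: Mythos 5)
Your quantile-coupling argument is correct and is the standard proof of this fact; the paper itself does not prove the lemma but merely cites Thorisson, and what is there is essentially the same construction. One tiny stylistic point: the equivalence $\{F_i^{-1}(U)\le t\}=\{U\le F_i(t)\}$ actually holds exactly for $u\in(0,1)$ by right-continuity of $F_i$, not just up to a null set, so you can drop that hedge; otherwise the sign-tracking and the order-reversal of the generalised inverse are handled correctly.
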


\begin{lemma}[Coupling]\label{majorant lemma}
Let $Y=(y_1,y_2,\dots,y_n)$ be a random vector on a probability space $(\Omega,\Sigma,\P)$
with i.i.d.\ non-negative coordinates with everywhere continuous cdf and
$\Exp y_i=1$. Further, let $\xi_{i}^k$ ($i\leq n$, $k=0,1,\dots$) be $0\text{-}1$ variables on $(\Omega,\Sigma,\P)$ with
$\P\{\xi_{i}^k=1\}=2^{-k}$, and such that $\xi_i^k$ are jointly independent for all $i\leq n$ and $k\geq 0$,
and set
$$z_i:=\sum\limits_{k=0}^\infty \tau_{k+1}(y_i)\xi_{i}^k,\;\;i=1,2,\dots,n,$$
and $Z:=(z_1,z_2,\dots,z_n)$.
Then there is a probability space
$(\widetilde\Omega,\widetilde\Sigma,\widetilde\P)$ and random vectors
$\widetilde Y=(\widetilde y_1,\widetilde y_2,\dots,\widetilde y_n)$ 
and $\widetilde Z=(\widetilde z_1,\widetilde z_2,\dots,\widetilde z_n)$ on $(\widetilde\Omega,\widetilde\Sigma,\widetilde\P)$
such that
\begin{enumerate}
\item[a)] $\widetilde Y$ and $\widetilde Z$ are equidistributed with $Y$ and $Z$, respectively;
\item[b)] $\widetilde z_{i}\ge \widetilde y_{i}$ for all $i\leq n$ everywhere on $(\widetilde\Omega,\widetilde\Sigma,\widetilde\P)$.
\end{enumerate}
\end{lemma}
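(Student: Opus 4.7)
The plan is to reduce the joint construction of $(\widetilde Y,\widetilde Z)$ to $n$ independent coordinate-wise couplings supplied by Lemma~\ref{coupling fact}. Since the $\xi_i^k$ are jointly independent across $i$ and $k$, the coordinates of $Z$ are i.i.d.; those of $Y$ are i.i.d.\ by hypothesis. Hence it is enough to build, for each $i\le n$, a probability space $(\Omega_i,\Sigma_i,\P_i)$ carrying variables $(\widetilde y_i,\widetilde z_i)$ with the same individual laws as $(y_i,z_i)$ and satisfying $\widetilde z_i\ge \widetilde y_i$ everywhere. The product space $\prod_i(\Omega_i,\Sigma_i,\P_i)$, with the variables lifted componentwise, then gives vectors $\widetilde Y$ and $\widetilde Z$ with marginal distributions equal to those of $Y$ and $Z$, while the pointwise inequality in part~(b) is automatic.

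The core step is thus the coordinate-wise stochastic dominance $\P\{z_i>t\}\ge \P\{y_i>t\}$ for every $t\in\R$ and each $i$. For $t<0$ both sides equal $1$ since $y_i,z_i\ge 0$. For $t\ge 0$, continuity of the cdf of $y_i$ makes $\tau\mapsto \P\{y_i\ge \tau\}$ continuous and non-increasing on $[0,\infty)$, so by the intermediate value theorem the level $\tau_K(y_i)$ is well defined and $\P\{y_i\ge \tau_K(y_i)\}=2^{-K}$ for every $K\ge 0$. If $t\ge \tau_K(y_i)$ for all $K$, then $\P\{y_i>t\}=0$ and there is nothing to check. Otherwise pick the unique $K\ge 0$ with $t\in[\tau_K(y_i),\tau_{K+1}(y_i))$. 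Then $\P\{y_i>t\}=\P\{y_i\ge t\}\le 2^{-K}$; and since the summands defining $z_i$ are non-negative, on the event $\{\xi_i^K=1\}$ we have $z_i\ge \tau_{K+1}(y_i)>t$, which gives $\P\{z_i>t\}\ge 2^{-K}\ge \P\{y_i>t\}$, as required.

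Once dominance is established, Lemma~\ref{coupling fact} produces each coordinate-wise coupling and the product construction from the first paragraph finishes the proof. The main obstacle, such as it is, is essentially bookkeeping around the level structure: one must verify the stochastic dominance at the boundary points $t=\tau_K(y_i)$, where the continuity assumption on the cdf of $y_i$ is used precisely to identify $\P\{y_i>t\}$ with $\P\{y_i\ge t\}$, and one must be careful that the choice of $K$ delivers the strict inequality $t<\tau_{K+1}(y_i)$, so that the event $\{\xi_i^K=1\}$ really places $z_i$ strictly above $t$.
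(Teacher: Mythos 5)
Your proof is correct and follows essentially the same path as the paper's: coordinate-wise stochastic dominance via the level structure, followed by the coupling lemma and a product-space construction. The only cosmetic difference is that you work directly with strict tail probabilities $\P\{\cdot>t\}$ to match the statement of Lemma~\ref{coupling fact} and you parse the case $t\in[\tau_K,\tau_{K+1})$ a bit more explicitly, while the paper phrases the dominance through $k(t):=\max\{k:\tau_k(y_i)\le t\}$ and weak tails; the content is the same.
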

\begin{proof}
Fix for a moment $i\leq n$ and consider the distributions of $y_i$ and $z_i$.
Take any $t>0$. If $\tau_k(y_i)\le t$ for all $k\ge 0$ then, obviously,
$$\P\{z_i\ge t\}\ge 0=\P\{y_i\ge t\}.$$
Otherwise, let $k(t):=\max\{k\ge 0:\, \tau_k(y_i)\le t\}$. Then
$$\P\{z_i\ge t\}
\ge\P\{\tau_{k(t)+1}(y_i)\xi_i^{k(t)}\ge\tau_{k(t)+1}(y_i)\}
=2^{-k(t)}=\P\{y_i\ge\tau_{k(t)}(y_i)\}\ge\P\{y_i\ge t\}.$$
Thus, $y_i$ is stochastically dominated by $z_i$ and, by Lemma~\ref{coupling fact}, there is a 
probability space $(\widetilde\Omega_i,\widetilde\Sigma_i,\widetilde\P_i)$
and variables $\widetilde y_i$ and $\widetilde z_i$ on $(\widetilde\Omega_i,\widetilde\Sigma_i,\widetilde\P_i)$
equidistributed with $y_i$ and $z_i$, respectively, such that $z_i\geq y_i$ everywhere on $\widetilde\Omega_i$.

Finally, by taking $(\widetilde\Omega,\widetilde\Sigma,\widetilde\P)$ to be the product space $\prod_{i}\Omega_{i}$
and naturally extending the variables $\widetilde y_{i},\widetilde z_{i}$ to $(\widetilde\Omega,\widetilde\Sigma,\widetilde\P)$,
we obtain the random vectors $\widetilde Y$, $\widetilde Z$ satisfying the required conditions.
\end{proof}

The next lemma provides an actual construction of the required diagonal operator.

\begin{lemma}\label{the heart}
For any $\alpha\in(0,1)$ there is $L=L(\alpha)>0$ with the following property.
Let $(\tau_k)_{k=1}^\infty$ be an increasing non-negative sequence satisfying
$\sum_{k=1}^\infty \tau_{k}2^{-k}<\infty$, and let
$$\widetilde Z:=\sum\limits_{k=0}^\infty\tau_{k+1}\xi^k,$$
where $\xi^k=(\xi^k_1,\xi^k_2,\dots,\xi^k_n)$ and $\xi^k_i$ ($i\leq n$, $k=0,1,\dots$) are jointly independent $0\text{-}1$
random variables with
$\P\{\xi^k_i=1\}=2^{-k}$.
Further, let $\delta\in(0,1]$.
Then there is a random positive contraction $\widetilde D$ taking values in $\Diag_n$ such that
$$\|\widetilde D\widetilde Z\|_1\leq \frac{L}{\delta}\Exp\|\widetilde Z\|_1
=\frac{Ln}{\delta}\sum\limits_{k=0}^\infty\tau_{k+1}2^{-k}\;\;\mbox{everywhere on the probability space},$$
and $\Exp(\det\widetilde D)^{\alpha-1}\le\exp(\delta)$.
\end{lemma}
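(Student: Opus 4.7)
The plan is to construct $\widetilde D$ as an almost-surely finite product of per-level independent contractions. Fix $C := L(\alpha)/\delta$ for a constant $L(\alpha)$ to be chosen later. For each $k\ge 0$ let $N_k := \{i\le n : \xi_i^k = 1\}$, so $|N_k|$ has the binomial distribution with parameters $n$ and $2^{-k}$. Set the threshold $T_k := C\cdot 2^{-k}n$ and the per-level contraction factor $\gamma_k := \min(1,\,T_k/|N_k|)\in(0,1]$, and let $\widetilde D_k\in\Diag_n$ act as $\gamma_k$ on the coordinates in $N_k$ and as the identity elsewhere. Then $\widetilde D := \prod_{k\ge 0}\widetilde D_k$ has diagonal entries $\widetilde d_{ii} = \prod_{k:\,i\in N_k}\gamma_k$; since the events $\{i\in N_k\}$ are independent in $k$ with $\sum_k 2^{-k}=2$, Borel--Cantelli guarantees an almost-surely finite product for each $i$, so $\widetilde D\in\Diag_n$ almost surely.

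The $\ell_1$-norm inequality is immediate from the construction: whenever $i\in N_k$ we have $\widetilde d_{ii}\le\gamma_k\le T_k/|N_k|$, so $\sum_{i\in N_k}\widetilde d_{ii}\le T_k$ for every $k$. By Fubini,
\[
\|\widetilde D\widetilde Z\|_1 \;=\; \sum_{k\ge 0}\tau_{k+1}\sum_{i\in N_k}\widetilde d_{ii} \;\le\; \sum_{k\ge 0}\tau_{k+1}T_k \;=\; Cn\sum_{k\ge 0}\tau_{k+1}2^{-k} \;=\; C\,\Exp\|\widetilde Z\|_1,
\]
which is the required bound with $L/\delta = C$. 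For the determinant, $\det\widetilde D = \prod_{k\ge 0}\gamma_k^{|N_k|}$; since the $|N_k|$ are mutually independent across $k$ (as the $\xi_i^k$ are jointly independent),
\[
\Exp(\det\widetilde D)^{\alpha-1} \;=\; \prod_{k\ge 0}\Exp\gamma_k^{(\alpha-1)|N_k|}.
\]

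The main work is to bound this product by $\exp(\delta)$. I would write $\Exp\gamma_k^{(\alpha-1)|N_k|}\le 1+\varepsilon_k$ and show that $\sum_k\varepsilon_k\le\delta$. Using the elementary $\binom{n}{m}(2^{-k})^m\le (en\cdot 2^{-k}/m)^m$, each summand with $|N_k|=m>T_k$ reduces after cancellation to at most $((2^{-k}n)/m)^{\alpha m}(e/C^{1-\alpha})^m$, which splits into two regimes. In the \emph{medium} regime $T_k\ge 1$ (i.e.\ $k\le\log_2(Cn)$), the estimate $2^{-k}n/m<1/C$ for $m>T_k$ collapses the summand to $(e/C)^m$, giving $\varepsilon_k\le 2(e/C)^{T_k}$; reindexing $j=\log_2(Cn)-k$ turns the sum into $\sum_{j\ge 0}2(e/C)^{2^j}$, a doubly-geometric series bounded by $4e/C$. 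In the \emph{tail} regime $T_k<1$, every $|N_k|\ge 1$ is ``bad'' and the dominant $m=1$ term yields $\varepsilon_k\le 2e(2^{-k}n)^\alpha/C^{1-\alpha}$; then summing the geometric series $\sum_{k>\log_2(Cn)}2^{-k\alpha}$ lets the factor $n^\alpha$ cancel against $(Cn)^{-\alpha}$, leaving a tail contribution of order $1/C$ (with implicit constant depending on $\alpha$). This last cancellation is the most delicate step and crucially requires $\alpha>0$: without it, the sum over very high levels with one bad coordinate each would diverge. Combining, $\sum_k\varepsilon_k\le L(\alpha)/C$, so the choice $C = L(\alpha)/\delta$ yields $\Exp(\det\widetilde D)^{\alpha-1}\le\exp(\sum_k\varepsilon_k)\le\exp(\delta)$, as required.
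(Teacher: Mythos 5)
Your proposal is correct and follows essentially the same construction and estimate as the paper's proof: the per-level contraction $\widetilde D_k$ and factor $\gamma_k$ are the paper's $D^{(k)}$ and $\min(1,L2^{-k}n/(\delta\nu_k))$ in disguise, the determinant bound uses independence across levels in the same way, and the two-regime split ($T_k\ge 1$ vs.\ $T_k<1$) mirrors the paper's case analysis. Your version is somewhat more explicit about the doubly-geometric summation in the medium regime and the $n^\alpha$ cancellation in the tail, but the underlying idea is identical.
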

\begin{proof}
Let $L\ge 2e$ be a number which we will determine later.
Now, for each $k\ge 0$, define random variables
$$\nu_k:=|\{i:\,\xi^k_i\neq 0\}|$$
and
$$\eta_k:=
\begin{cases}
\Bigl(\frac{\delta\nu_k}{L2^{-k}n}\Bigr)^{\nu_k},&\mbox{if }\delta\nu_k\ge L2^{-k}n;\\
1,&\mbox{otherwise.}\end{cases}$$
As building blocks of the contraction $\widetilde{D}$, let us consider random diagonal matrices $D^{(k)}$ with
$$d^{(k)}_{jj}:=\begin{cases}1,&\mbox{if }\xi^k_j=0;\\
\min\bigl(1,\frac{L2^{-k}n}{\delta\nu_k}\bigr),&\mbox{otherwise,}\end{cases}\;\;\;\;j=1,2,\dots,n.$$
Then $\det D^{(k)}={\eta_k}^{-1}$ and $\|D^{(k)}\xi^k\|_1
\le \frac{L2^{-k}n}{\delta}=\frac{L}{\delta}\Exp\|\xi^k\|_1$ (deterministically).
Note that $D^{(k)}$ acts as a dilation on the span of $\{e_i:\,\xi^k_i\neq 0\}$
provided that $\nu_k\ge \frac{L2^{-k}n}{\delta} = \frac{L}{\delta} \Exp\nu_k$, and as an isometry on the orthogonal complement.
We construct the required contraction $\widetilde{D}$ as the product of contractions $D^{(k)}$ by setting
$\widetilde D :=\prod_{k = 0}^{\infty} D^{(k)}$.
Then
$$\|\widetilde D\widetilde Z\|_1\leq
\Bigl\|\sum\limits_{k=0}^\infty \tau_{k+1}D^{(k)}\xi^k\Bigr\|_1\leq
\frac{Ln}{\delta}\sum\limits_{k=0}^\infty \tau_{k+1}2^{-k}=\frac{L}{\delta}\Exp\|\widetilde Z\|_1.$$
Note that
$$\Exp\bigl(\det \widetilde D\bigr)^{\alpha-1}= \Exp\prod_{k=0}^\infty {\eta_k}^{1-\alpha}
=\prod_{k=0}^\infty\Exp{\eta_k}^{1-\alpha}.$$
Next, for every $k\ge 0$ we have
\begin{align*}
\Exp{\eta_k}^{1-\alpha}&\le 1+\sum\limits_{m=\lceil L2^{-k}n/\delta\rceil}^\infty
\Bigl(\frac{\delta m}{L2^{-k}n}\Bigr)^{m-\alpha m}\P\bigl\{\nu_k=m\bigr\}\\
&\le 1+\sum\limits_{m=\lceil L2^{-k}n/\delta\rceil}^\infty
\Bigl(\frac{e\delta}{L}\Bigr)^m\,\Bigl(\frac{L2^{-k}n}{\delta m}\Bigr)^{\alpha m}.
\end{align*}
In particular, for all $k$ such that $L2^{-k}n/\delta\ge 1$, using the relation $L\ge 2e$, we obtain
$$\Exp{\eta_k}^{1-\alpha}\le 1+2\Bigl(\frac{e\delta}{L}\Bigr)^{\lceil L2^{-k}n/\delta\rceil},$$
and for all $k$ satisfying $L2^{-k}n/\delta< 1$, we get
$$\Exp{\eta_k}^{1-\alpha}\le 1+2\frac{e\delta}{L}\bigl(L2^{-k}n/\delta\bigr)^{\alpha}.$$
Now, let us choose $L=L(\alpha)$ sufficiently large so that both
$$\sum\limits_{k:\,L2^{-k}n/\delta\ge 1}2\Bigl(\frac{e\delta}{L}\Bigr)^{\lceil L2^{-k}n/\delta\rceil}\;\;\;\;
\mbox{and}
\;\;\;\;\sum\limits_{k:\,L2^{-k}n/\delta< 1}2\frac{e\delta}{L}\bigl(L2^{-k}n/\delta\bigr)^{\alpha}$$
are less than $\delta/2$.
Then, multiplying the estimates for $\Exp{\eta_k}^{1-\alpha}$, we get
$$\Exp\Bigl(\prod_{k=0}^\infty {\eta_k}\Bigr)^{1-\alpha}\le \exp(\delta),$$
and the result follows.
\end{proof}

\begin{proof}[Proof of Proposition~\ref{vector theorem}]
Fix admissible $\alpha$, $\delta$ and $p$.
Without loss of generality, the distribution of the coordinates of the random vector $X$ is continuous on the real line.
Indeed, otherwise we can replace every coordinate $x_i$ with $|x_i|+u_i$, where $u_1,u_2,\dots,u_n$
are jointly independent with $x_1,x_2,\dots,x_n$ and each $u_i$ is uniformly distributed on $[0,\theta]$
for a very small parameter $\theta>0$ chosen so that $\Exp (|x_i|+u_i)^p\approx\Exp |x_i|^p$.
Then the random diagonal contraction $D$ constructed for the new vector $X':=(|x_i|+u_i)_{i=1}^n$,
will also satisfy the required properties with respect to $X$.

Set $Y:=(|x_1|^p,|x_2|^p,\dots,|x_n|^p)$ and let
$\widetilde Y,\widetilde Z$ be random vectors
on a  space $(\widetilde \Omega,\widetilde\Sigma,\widetilde\P)$ constructed in Lemma~\ref{majorant lemma}
with respect to $Y$. By Lemma~\ref{the heart} and in view of relation \eqref{expectation in terms of tau},
we can find a random positive contraction $\widetilde D$
on $\widetilde \Omega$ taking values in $\Diag_n$
such that for some $L=L(\alpha)>0$ we have
$$\|\widetilde D\widetilde Y\|_1\le \|\widetilde D\widetilde Z\|_1\le \frac{L}{\delta}\Exp\|\widetilde Z\|_1
\le \frac{4L}{\delta}\Exp\|\widetilde Y\|_1\;\;\;\mbox{everywhere on }\widetilde \Omega$$
and
$$\Exp (\det \widetilde D)^{\alpha-1}\le\exp(\delta).$$
In general, the operator $\widetilde D$ is not a function of $\widetilde Y$,
which creates (purely technical) issues in defining corresponding operator on the original space $(\Omega,\Sigma,\P)$.
For completeness, let us describe an elementrary discretization argument resolving the problem:

Let $\{B_z\}$ be a partition of $\R^n_+$ into Borel subsets, indexed over $z=(z_1,z_2,\dots,z_n)\in(\Z\cup\{-\infty\})^n$
and defined by
$$B_z:=\big\{W\in\R^n_+:\,W_i\in (2^{z_i-1},2^{z_i}]\;\;\mbox{for all }i=1,2,\dots,n\big\}$$
(we set $W_i=0$ for $z_i=-\infty$). Further, for every $z$ we let
$$\widetilde\Omega_z:=\big\{\widetilde\omega\in\widetilde\Omega:\,\widetilde Y(\widetilde \omega)\in B_z\big\}$$
and $Q_z:=\widetilde D(\widetilde\Omega_z)=\{M\in \Diag_n:\,M=\widetilde D(\widetilde\omega)\;\;
\mbox{for some }\widetilde\omega\in\widetilde\Omega_z\}$.
For each $z\in(\Z\cup\{-\infty\})^n$ such that $\widetilde\Omega_z$
is non-empty, choose an operator $D_z$ from {\it the closure} of $Q_z$ such that $\det D_z\geq \det M$
for all $M\in Q_z$ (of course, the choice of $D_z$ does not have to be unique).
Otherwise, if $\widetilde\Omega_z$ is empty then we set
$D_z:=\min\big(1,\frac{4L}{\delta \sum_{i=1}^n 2^{z_i}}\Exp\|\widetilde Y\|_1\big){\rm Id}_n$.
Finally, define a function $h:\R^n_+\to\Diag_n$ by setting
$h(W):=D_z$ for all $W\in B_z$ and $z\in(\Z\cup\{-\infty\})^n$. Observe that $h$ is Borel.
Further, by the choice of $D_z$'s, we have $\det h(\widetilde Y)\geq \det \widetilde D$ everywhere on $\widetilde\Omega$,
whence $\Exp (\det h(\widetilde Y))^{\alpha-1}\le\exp(\delta)$.
Next, by the choice of sets $B_z$, we have $\|M(W)\|_1\leq 2\|M'(W')\|_1$ for {\it any} two couples
$(M,W),(M',W')\in Q_z\times B_z$. Together with the conditions on $\widetilde D$ and the definition of $D_z$'s,
this implies $\|D_z(W)\|_1\leq \frac{8L}{\delta}\Exp\|\widetilde Y\|_1$ for all $W\in B_z$, whence
$$\|h(W)\, W\|_1\leq \frac{8L}{\delta}\Exp\|\widetilde Y\|_1\;\;\;\mbox{everywhere on }\R^n_+.$$

Now, taking $T:=h(Y)$, we obtain a random diagonal contraction on $(\Omega,\Sigma,\P)$ such that
$$\|T^{1/p}X\|_p^p=\|T Y\|_1\le \frac{8L}{\delta}\Exp\|X\|_p^p\;\;\;\mbox{everywhere on $\Omega$}$$
and $\Exp (\det T)^{\alpha-1}\le\exp(\delta)$.
Finally, setting $D:=T^{1/p}$, we get the required operator.
\end{proof}

\bigskip

The above statement can be ``tensorized''. In what follows, we are interested only in the case $p=2$ and $\alpha=1/2$.
\begin{prop}\label{bounded rows and det}
There is a universal constant $C>0$
with the following property.
Let $A=(a_{ij})$ be an $n\times n$ random matrix satisfying \eqref{probab model},
and let $\delta\in(0,1]$.
Then there is a random positive contraction $D$ taking values in $\Diag_n$ such that
the Euclidean norms of the rows of $AD$ are uniformly bounded by
$\frac{C}{\sqrt{\delta}}\sqrt{n}$ everywhere on the probability space, and
$$\Exp\det D^{-1}\le \exp(\delta n).$$
\end{prop}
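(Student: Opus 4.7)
The plan is to apply Proposition~\ref{vector theorem} to each row of $A$ separately and combine the resulting diagonal contractions by taking a coordinatewise minimum. Write $R_1,\dots,R_n$ for the rows of $A$; since the entries of $A$ are i.i.d.\ with mean zero and variance one, each $R_i$ is a random vector in $\R^n$ with i.i.d.\ coordinates satisfying $\E\|R_i\|_2^2=n$, and the rows are mutually independent. I would apply Proposition~\ref{vector theorem} with $p=2$ and $\alpha=1/2$ (so that $p\alpha-p=-1$) and the given $\delta$ to each row individually, obtaining for every $i\le n$ a random positive diagonal contraction $D_i\in\Diag_n$ with
$$\|D_i R_i\|_2^2\le \frac{Ln}{\delta}\;\;\mbox{everywhere and}\;\;\E(\det D_i)^{-1}\le\exp(\delta),$$
where $L=L(1/2)$ is the constant from that proposition.

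The key point is that the construction in the proof of Proposition~\ref{vector theorem} realizes $D_i$ as the value of a (fixed) Borel function $h$ applied to the coordinatewise square $(a_{i1}^2,\dots,a_{in}^2)$ of $R_i$; thus $D_i$ is a deterministic function of $R_i$, and the $D_i$'s inherit mutual independence from the rows. Next, I would define $D\in\Diag_n$ by $D_{jj}:=\min_{1\le i\le n}(D_i)_{jj}$, which indeed takes values in $(0,1]$. The $i$-th row of $AD$ has squared Euclidean norm
$$\sum_{j=1}^n D_{jj}^2 a_{ij}^2\le \sum_{j=1}^n (D_i)_{jj}^2 a_{ij}^2 = \|D_i R_i\|_2^2\le \frac{Ln}{\delta},$$
which yields the required row-norm bound with $C:=\sqrt{L}$.

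For the determinant bound, the elementary inequality $\min_{i}(D_i)_{jj}\ge \prod_{i=1}^n(D_i)_{jj}$, valid for any numbers in $(0,1]$, gives $\det D\ge \prod_{i=1}^n\det D_i$, hence $(\det D)^{-1}\le \prod_{i=1}^n(\det D_i)^{-1}$. Using independence of the $D_i$'s, I would conclude
$$\E(\det D)^{-1}\le \prod_{i=1}^n\E(\det D_i)^{-1}\le \exp(\delta)^n=\exp(\delta n),$$
completing the proof. The only subtle point is the measurable dependence of $D_i$ on $R_i$, which is needed to transfer independence of the rows to independence of the contractions; this is not a genuine obstacle, but it has to be extracted from the explicit Borel construction in the proof of Proposition~\ref{vector theorem} rather than from its statement. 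Aside from that verification, the argument is a direct tensorization of the one-dimensional result.
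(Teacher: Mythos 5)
Your proof is correct and takes essentially the same route as the paper: apply Proposition~\ref{vector theorem} with $p=2$, $\alpha=1/2$ to each row independently and combine the resulting diagonal contractions. The only cosmetic difference is that you take the coordinatewise minimum $D_{jj}=\min_i (D_i)_{jj}$ whereas the paper takes the product $D=\prod_i D_i$; since the diagonal entries lie in $(0,1]$, the product is pointwise $\le$ the minimum, so the paper gets the row-norm bound from $\prod_k(D_k)_{jj}\le (D_i)_{jj}$ and the determinant bound from the exact identity $\det D=\prod_i\det D_i$, while you get the row-norm bound directly from $\min_k(D_k)_{jj}\le (D_i)_{jj}$ and the determinant bound via the extra (harmless) inequality $\min\ge\prod$. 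Your observation about measurability---that the paper's Borel-function construction of $D_i$ from $|R_i|$ is what guarantees joint independence of the $D_i$'s---is exactly the point the paper leans on implicitly when asserting independence, and it is good that you flagged it.
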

\begin{proof}
Indeed, for any $i=1,2,\dots,n$, let $D_i$ be the positive contraction defined with respect to the $i$-th row of $A$
using Proposition~\ref{vector theorem}
(with parameters $\alpha=1/2$, $p=2$), so that $D_1,D_2,\dots,D_n$ are jointly independent.
Then the product of these contractions $D:=\prod_{i=1}^n D_i$ satisfies the required conditions. 
\end{proof}

\begin{rem} \label{expectation remark}
It is not difficult to see that for any positive contraction $M\in\Diag_n$
there is an element $\widetilde M\in\DiagDyadic_n$ such that $\widetilde M\le \sqrt{2}M$ and
$\det \widetilde M^{-1}\le \det M^{-2}$.
Indeed, this follows easily from the fact that for any number $t\in(0,1]$ there is $\widetilde t\in\{1\}\cup\bigl\{2^{-2^k}\bigr\}_{k=0}^\infty$
with $t^2\le\widetilde t\le \sqrt{2}t$ (the constant $\sqrt{2}$ on the right-hand side is achieved for $t= \sqrt{2}/2-o(1)$).
Hence, the above statement implies that, given a matrix $A$ satisfying \eqref{probab model} and a number $\delta >0$,
one can construct a random contraction $\widetilde D$ taking values in $\DiagDyadic_n$ such that each row of $A\widetilde D$ has Euclidean norm
at most $\frac{C}{\sqrt{\delta}}\sqrt{n}$
(for some universal constant $C>0$), and $\Exp\det \widetilde D^{-1/2}\le \exp(\delta n).$
\end{rem}

\section{Coverings of random ellipsoids}\label{parallel section}

The main result of the section is
\begin{theor}\label{parallelepiped norm estimate}
Let $\delta\in(0,1]$ and let $A=(a_{ij})$ be an $n \times n$ random matrix satisfying \eqref{probab model}.
Then
$$\P\Bigl\{\exists D\in \DiagDyadic_n:
\det D\ge\exp(-\delta n)\mbox{ and }\|AD\|_{\infty\to 2}
\le \frac{C_{\text{\tiny\ref{parallelepiped norm estimate}}}}{\sqrt{\delta}}n\Bigr\}\ge 1-4\exp(-\delta n/8),$$
where $C_{\text{\tiny\ref{parallelepiped norm estimate}}}>0$ is a universal constant.
\end{theor}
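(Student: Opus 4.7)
The strategy combines Remark~\ref{expectation remark} with a suitable operator-norm estimate. First, I would apply Remark~\ref{expectation remark} with parameter $\delta/8$ to obtain a random contraction $\widetilde D\in\DiagDyadic_n$ with the deterministic row bound $\|(A\widetilde D)_i\|_2\le C_0\sqrt{n/\delta}$ for every row $i$, together with the moment bound $\Exp\det\widetilde D^{-1/2}\le \exp(\delta n/8)$. Applying Markov's inequality to the latter yields $\det \widetilde D\ge \exp(-\delta n)$ with probability at least $1-\exp(-\delta n/8)$, settling the determinant part of the conclusion.

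It remains to control $\|A\widetilde D\|_{\infty\to 2}$. I would use the elementary estimate $\|A\widetilde D\|_{\infty\to 2}\le\sqrt{n}\,\|A\widetilde D\|_{2\to 2}$, which reduces the task to proving the operator-norm bound $\|A\widetilde D\|_{2\to 2}\le C\sqrt{n/\delta}$ on an event of probability at least $1-2\exp(-\delta n/8)$. The key geometric observation is that the row bound forces each entry to satisfy $|(A\widetilde D)_{ij}|\le C_0\sqrt{n/\delta}$, and each column of $A\widetilde D$ has entries with variance $\widetilde d_{jj}^2\le 1$. This places $A\widetilde D$ in the regime where random-matrix estimates (matrix Bernstein on the decomposition $A\widetilde D=\sum_i e_i (A\widetilde D)_i^{\top}$, or a truncation-and-Hanson-Wright argument applied to $\|A\widetilde D\epsilon\|_2^2=\epsilon^\top \widetilde D A^\top A\widetilde D\,\epsilon$ combined with a union bound over the $2^n$ sign vectors) predict operator norm of order $\sqrt n$, comfortably below the target $\sqrt{n/\delta}$.

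The principal obstacle is the intricate dependence of $\widetilde D$ on $A$: the rows of $A\widetilde D$ are \emph{not} independent, so standard matrix concentration cannot be invoked verbatim. I would handle this via one of two routes. (i) \emph{Discretization and union bound:} the set $\mathcal S:=\{D\in\DiagDyadic_n:\det D\ge \exp(-\delta n)\}$ has cardinality at most $\exp(O(\delta n\log(1/\delta)))$, since a contraction in $\DiagDyadic_n$ is specified by a vector of doubly-exponential values subject to a determinant constraint. For each fixed $D\in\mathcal S$ the matrix $AD$ has independent rows, so one can estimate $\P\{\|AD\|_{2\to 2}>C\sqrt{n/\delta}\text{ and the rows of }AD\text{ are bounded}\}$ by $\exp(-c\delta n\log(1/\delta))$ via matrix concentration after entrywise truncation of $A$, then sum over $\mathcal S$. (ii) \emph{Decoupling via the explicit construction:} the product representation $\widetilde D=\prod_i D_i$ from the proofs of Lemma~\ref{the heart} and Proposition~\ref{bounded rows and det}, in which each $D_i$ is measurable with respect to the $i$-th row of $A$, allows one to control $A\widetilde D$ directly by conditioning on subcollections of rows. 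Combining the operator-norm bound with the determinant bound via a final union bound and a slight readjustment of absolute constants yields the theorem with failure probability at most $4\exp(-\delta n/8)$.
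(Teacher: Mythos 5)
Your determinant step via Markov's inequality is fine, and your cardinality estimate for the discretized set $\mathcal S$ of dyadic contractions (of order $\exp(4\delta n\ln(2e/\delta))$) mirrors Lemma~\ref{quantity}. However, the core of your proposal --- reducing the $\infty\to 2$ estimate to a $2\to 2$ estimate and then invoking matrix concentration --- does not work, and this is not a technical inconvenience but the heart of why the problem is hard for heavy-tailed matrices.

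The passage through $\|A\widetilde D\|_{\infty\to 2}\le\sqrt{n}\,\|A\widetilde D\|_{2\to 2}$ forces you to prove $\|A\widetilde D\|_{2\to 2}\le C\sqrt{n/\delta}$, and none of the tools you cite can deliver this. Writing the row bound as $\|(A\widetilde D)_i\|\le K\sqrt n$ with $K=C_0/\sqrt\delta$, the uniform-bound parameter in matrix Bernstein applied to the row decomposition $\sum_i e_i (A\widetilde D)_i^{\top}$ is $R=K\sqrt{n}$, while the variance proxy is $\sigma^2\le n$. At the target level $t=C\sqrt{n/\delta}=CK\sqrt n$, the Bernstein exponent
$$
-\frac{t^2}{2\sigma^2+\tfrac{2}{3}Rt}=-\frac{C^2K^2 n}{2n+\tfrac{2}{3}CK^2 n}
$$
is a constant independent of $n$, so the resulting tail probability, $2n$ times an $O(1)$ exponential, is not even $o(1)$ --- let alone $\exp(-c\delta n\ln(1/\delta))$ as your union bound over $\mathcal S$ would need. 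This is not a weakness of Bernstein: for heavy-tailed $A$ the quantity $\|A\widetilde D\|_{2\to 2}$ is in general \emph{not} $O(\sqrt{n/\delta})$, and even the theorem being proved only yields $\|A\widetilde D\|_{2\to 2}\le\|A\widetilde D\|_{\infty\to 2}\le Cn/\sqrt\delta$, a factor $\sqrt n$ worse than your intermediate target. Both of your routes (i) and (ii) inherit this obstruction.

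What makes the paper's argument (Propositions~\ref{parallelepipeds sym} and~\ref{parallelepiped norm estimate sym}) work is that it never leaves the $\infty\to 2$ norm: since $\|AD\|_{\infty\to 2}=\max_{v\in\{-1,1\}^n}\|ADv\|$ is a maximum over finitely many weakly concentrated scalars rather than a supremum over the sphere, one can condition on $|A|$ (so the contraction $D$, a Borel function of $|A|$, is frozen), use the symmetry of the signs to recognize each coordinate $\langle(AD)_i,v\rangle=\sum_j a_{ij}d_{jj}v_j$ as a Rademacher sum with coefficient vector of norm at most $K\sqrt n$, apply Khintchine (Lemma~\ref{Khintchine}) to see it is $K\sqrt n$-subgaussian, deduce from Lemma~\ref{sum of subgaussians} that $\|ADv\|^2$ concentrates near $K^2n^2$ with failure probability $\exp(-(1+\ln 2)n)$, and finish by a union bound over the $2^n$ vertices. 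This conditional-symmetrization mechanism, which is exactly what circumvents the unbounded $2\to 2$ norm, is absent from your outline. Finally, your proposal also omits the reduction to symmetric entries: the Khintchine step needs conditionally symmetric signs, and the paper achieves this by taking an independent copy $\widetilde A$, bounding $\|\widetilde A\|_{\infty\to 2}$ on a positive-probability event via a random-permutation concentration argument (Proposition~\ref{permutation model}, resting on Lemma~\ref{cor of azuma}), and applying the symmetric-case bound to $A-\widetilde A$ together with the triangle inequality and a conditioning argument. This is where the factor $4$ in $1-4\exp(-\delta n/8)$ comes from, and it is a substantial piece of the proof that your plan does not address.
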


\begin{rem}
The above theorem can be seen as a way to ``regularize'' the random matrix $A$ by reducing
its norm while preserving its ``structure''. In this connection, let us mention work \cite{LV} where
a very general problem of regularizing random matrices was discussed (see \cite[Section~5.4]{LV}).
\end{rem}

As we have mentioned in the introduction, Theorem~A follows almost immediately from
the above statement; we give the proof of Theorem~A at the very end of the section.
The section is organized as follows.
First, we use $\widetilde{D}$ constructed in Remark~\ref{expectation remark}
to verify Theorem~\ref{parallelepiped norm estimate} under an additional assumption
that the entries of $A$ are symmetrically distributed
(see Proposition~\ref{parallelepiped norm estimate sym}).
Then, we will apply a symmetrization procedure to prove
Theorem~\ref{parallelepiped norm estimate} in full generality.

\bigskip

A random variable $\xi$ is {\it subgaussian} if there exists a number $K>0$ such that
\begin{equation}\label{subgaussian equation}
\P\{|\xi| > t\} \le 2\exp\bigl(-t^2/K^2\bigr),\;\; t>0.
\end{equation}
To put an emphasis on the value of $K$, we will sometimes call $\xi$ $K$-subgaussian.
We note that the smallest value of $K$ satisfying \eqref{subgaussian equation} is equivalent
to {\it the subgaussian norm} of $\xi$ (see, for example, \cite[Lemma~5.5]{Vershynin});
however, the latter notion is less convenient for us and will not be used in this paper.

The next lemma is equivalent to a standard Khintchine--type inequality (see, for example, \cite{Hoeffding}).
\begin{lemma}\label{Khintchine} Let $r_1,r_2,\dots,r_n$
be independent Rademacher random variables.
Then for any vector $y\in S^{n-1}$ the random variable $\sum_{i = 1}^n y_i r_i$
is $C_{\text{\tiny\ref{Khintchine}}}$-subgaussian, where
$C_{\text{\tiny\ref{Khintchine}}} > 0$ is a universal constant.
\end{lemma}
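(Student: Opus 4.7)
The plan is to establish the sub-Gaussian tail bound for $S := \sum_{i=1}^n y_i r_i$ through the standard Laplace transform/Chernoff route, which is the cleanest way to extract a universal constant. I would proceed in four steps.

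First, I would exploit independence of the Rademacher variables to factor the moment generating function: for any $t \in \R$,
\[
\E\exp(tS) = \prod_{i=1}^n \E\exp(t y_i r_i) = \prod_{i=1}^n \cosh(t y_i).
\]
Next, I would invoke the elementary analytic inequality $\cosh(x) \le \exp(x^2/2)$ for all real $x$ (obtained by comparing Taylor series coefficient-by-coefficient: $\frac{1}{(2k)!}\le \frac{1}{2^k k!}$). This yields
\[
\E\exp(tS) \le \prod_{i=1}^n \exp(t^2 y_i^2 / 2) = \exp\bigl(t^2 \|y\|^2/2\bigr) = \exp(t^2/2),
\]
where the last equality uses $y\in S^{n-1}$.

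Second, I would apply Markov's inequality to $\exp(tS)$ for $t>0$ and optimize. For any $s>0$,
\[
\P\{S > s\} \le \exp(-ts)\,\E\exp(tS) \le \exp(-ts + t^2/2),
\]
and the choice $t = s$ gives $\P\{S > s\} \le \exp(-s^2/2)$. Since $S$ has a symmetric distribution (Rademacher variables are symmetric, so $-S$ has the same law as $S$), combining the two one-sided bounds yields
\[
\P\{|S| > s\} \le 2\exp(-s^2/2), \qquad s > 0.
\]
Comparing with the definition \eqref{subgaussian equation}, this shows $S$ is $K$-subgaussian with $K = \sqrt{2}$, so the lemma holds with $C_{\text{\tiny\ref{Khintchine}}} = \sqrt{2}$ (any larger universal constant works as well).

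There is essentially no serious obstacle here; the argument is a textbook Chernoff bound. The only ingredient beyond manipulations is the inequality $\cosh(x) \le e^{x^2/2}$, but this is routine. If one preferred, the same conclusion follows by citing Hoeffding's inequality directly from \cite{Hoeffding} as suggested by the reference in the statement, applied to the bounded independent centered variables $y_i r_i$ with $|y_i r_i| \le |y_i|$ and $\sum y_i^2 = 1$.
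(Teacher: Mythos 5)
Your proof is correct, and it is precisely the standard argument behind the reference the paper cites: the paper itself gives no proof, merely noting that the lemma is equivalent to a Khintchine-type inequality and pointing to Hoeffding \cite{Hoeffding}, whose proof is exactly the Chernoff/MGF computation with $\cosh(x)\le e^{x^2/2}$ that you wrote out. So you have reconstructed the intended argument in full.
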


The sum of squares of subgaussian variables has good concentration properties;
the bound below follows from a standard ``Laplace transform'' argument (see, for
example, \cite[Corollary~5.17]{Vershynin}):
\begin{lemma}\label{sum of subgaussians}
For any $T>0$ there is $L_{\text{\tiny\ref{sum of subgaussians}}}>0$ depending on $T$ with the following property:
Let $\xi_1,\xi_2,\dots,\xi_n$ be independent centered $1$-subgaussian random variables.
Then
$$\P\Bigl\{\sum\limits_{i=1}^n\xi_i^2>L_{\text{\tiny\ref{sum of subgaussians}}}n\Bigr\}\le\exp(-Tn).$$
\end{lemma}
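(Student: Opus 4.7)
The plan is to prove this by the standard Laplace transform (Chernoff) argument: turn the tail bound on $\sum_i\xi_i^2$ into a bound on a moment generating function, exploit independence to factor it, and choose the free parameter to optimize.

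First, I would show that for every centered $1$-subgaussian random variable $\xi$ there exist universal constants $\lambda_0>0$ and $c_0>0$ such that $\Exp\exp(\lambda_0\xi^2)\le\exp(c_0)$. This is a one-line integration-by-parts computation from the tail bound \eqref{subgaussian equation}:
$$\Exp\exp(\lambda\xi^2)=1+\int_0^\infty 2\lambda s\,e^{\lambda s^2}\,\P\{|\xi|>s\}\,ds\le 1+4\lambda\int_0^\infty s\,e^{-(1-\lambda)s^2}\,ds,$$
which is finite for any $\lambda<1$ and tends to $1$ as $\lambda\to 0^+$. In particular one can fix a small universal $\lambda_0\in(0,1)$ for which $\Exp\exp(\lambda_0\xi^2)\le\exp(c_0)$ with some universal $c_0>0$, uniformly over all centered $1$-subgaussian $\xi$.

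Second, by independence of the $\xi_i$'s and Markov's inequality applied to $\exp\bigl(\lambda_0\sum_i\xi_i^2\bigr)$,
$$\P\Bigl\{\sum_{i=1}^n\xi_i^2>Ln\Bigr\}\le e^{-\lambda_0 Ln}\prod_{i=1}^n\Exp\exp(\lambda_0\xi_i^2)\le\exp\bigl((c_0-\lambda_0 L)n\bigr).$$
Given the prescribed $T>0$, I would then set $L_{\text{\tiny\ref{sum of subgaussians}}}:=(T+c_0)/\lambda_0$, which makes the right-hand side at most $\exp(-Tn)$, as required.

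There is no genuine obstacle here: the only ``choice'' is the threshold $\lambda_0$, and it is controlled universally by the tail bound \eqref{subgaussian equation}. The dependence of $L_{\text{\tiny\ref{sum of subgaussians}}}$ on $T$ is necessarily linear, since $\Exp\xi_i^2$ can be of order one (so $\sum\xi_i^2$ is typically of order $n$) and a large deviation estimate of the requested form $\exp(-Tn)$ must scale the threshold accordingly. The argument is precisely the one referenced via \cite[Corollary~5.17]{Vershynin}; I have only spelled out the inputs explicitly for a self-contained account.
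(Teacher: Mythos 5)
Your proof is correct and is exactly the standard Laplace transform argument that the paper invokes (without spelling it out) by citing Vershynin's Corollary~5.17: bound $\Exp\exp(\lambda_0\xi^2)$ uniformly via the subgaussian tail, factor the moment generating function by independence, apply Markov's inequality, and tune $L$ linearly in $T$. No gap; this matches the intended proof.
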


The next proposition implies that for a random matrix $A$ satisfying \eqref{probab model}
with symmetrically distributed entries and the operator $\widetilde D$ from
Remark~\ref{expectation remark}, the norm $\|A\widetilde D\|_{\infty\to 2}$ can be efficiently bounded from above
as long as $\widetilde D$ is a Borel function of $|A|$ (here and further in the text, given a matrix $B=(b_{ij})$, 
by $|B|$ we shall denote the matrix $(|b_{ij}|)$).

\begin{prop}\label{parallelepipeds sym}
Let $K>0$ and let $A$ be an $n \times n$ random matrix satisfying \eqref{probab model}, with symmetrically distributed entries.
Further, let $\F\subset \Diag_n$ be any countable subset. Denote by $\Event$ the event
$$
\Event:=\bigl\{\exists D \in \F:\mbox{ all rows of }AD\mbox{ have Euclidean norms at most } K\sqrt{n}\bigr\}.
$$
Then
$$\P\{\exists D\in \F:\,\|AD\|_{\infty\to 2}\le CK n\}\ge \P(\Event) - \exp(-n),$$
where $C>0$ is a universal constant.
\end{prop}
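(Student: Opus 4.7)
The key idea is to exploit the symmetry of the entries of $A$ by separating signs from absolute values: conditionally on $|A|=(|a_{ij}|)$ we have $a_{ij}=\varepsilon_{ij}|a_{ij}|$, where $\varepsilon_{ij}$ are jointly independent Rademacher variables independent of $|A|$. Since the row norms of $AD$ depend only on $|A|$ and $D$, the event $\Event$ is measurable with respect to $|A|$, and because $\F$ is countable, we can enumerate $\F=\{D_1,D_2,\dots\}$ and measurably select $D^\star=D^\star(|A|)\in\F$ that certifies $\Event$ whenever $\Event$ occurs (and is defined arbitrarily otherwise). The whole game is then to show that, conditionally on $|A|$ and on $\Event$, the random matrix $AD^\star$ has $\|AD^\star\|_{\infty\to 2}\le CKn$ with conditional probability at least $1-\exp(-n)$.

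Having fixed $D^\star$, observe that $\|AD^\star\|_{\infty\to 2}$ equals $\max_{x\in\{-1,+1\}^n}\|AD^\star x\|$ since the $\ell_\infty$ unit ball is a polytope with vertices $\{-1,+1\}^n$. Thus it suffices to fix $x\in\{-1,+1\}^n$, prove a pointwise concentration bound, and then union bound over the $2^n$ choices of $x$. For a fixed $x$, writing $d_j:=D^\star_{jj}$, we have
$$\|AD^\star x\|^2=\sum_{i=1}^n\Bigl(\sum_{j=1}^n\varepsilon_{ij}\,|a_{ij}|\,d_j\,x_j\Bigr)^2.$$
For each $i$, the vector $(|a_{ij}|d_j x_j)_{j=1}^n$ is measurable with respect to $|A|$ and, on $\Event$, has $\ell_2$-norm at most $K\sqrt{n}$ (since $|x_j|\le 1$ and $D^\star$ certifies $\Event$). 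By Lemma~\ref{Khintchine}, conditionally on $|A|$ each inner sum is $C_{\text{\tiny\ref{Khintchine}}}K\sqrt{n}$-subgaussian, and the inner sums are independent across $i$ because the $\varepsilon_{ij}$ are jointly independent.

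Applying Lemma~\ref{sum of subgaussians} with, say, $T=2$ to the normalized variables $\xi_i:=\bigl(C_{\text{\tiny\ref{Khintchine}}}K\sqrt{n}\bigr)^{-1}\sum_{j}\varepsilon_{ij}|a_{ij}|d_j x_j$ yields, on $\Event$,
$$\P\Bigl\{\|AD^\star x\|^2>L_{\text{\tiny\ref{sum of subgaussians}}}C_{\text{\tiny\ref{Khintchine}}}^2 K^2n^2\;\Big|\;|A|\Bigr\}\le\exp(-2n).$$
A union bound over $x\in\{-1,+1\}^n$ contributes a factor of $2^n\le e^n/e^n\cdot e^n$; more precisely $2^n\exp(-2n)\le\exp(-n)$. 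Hence, setting $C:=\sqrt{L_{\text{\tiny\ref{sum of subgaussians}}}}\,C_{\text{\tiny\ref{Khintchine}}}$,
$$\P\bigl\{\|AD^\star\|_{\infty\to 2}>CKn\;\big|\;|A|\bigr\}\cdot\Indic_\Event\le\exp(-n).$$
Integrating over $|A|$ and using that on the complementary event $\{\|AD^\star\|_{\infty\to 2}\le CKn\}\cap \Event$ the matrix $D^\star\in\F$ witnesses the desired conclusion, we obtain
$$\P\{\exists D\in\F:\,\|AD\|_{\infty\to 2}\le CKn\}\ge \P(\Event)-\exp(-n).$$

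The genuinely delicate point is essentially bookkeeping rather than mathematics: we must make sure the selected $D^\star$ really is a function of $|A|$ alone, so that after conditioning on $|A|$ the subgaussian/Rademacher randomness in $A$ remains fully independent and available for the concentration step. Countability of $\F$ handles this cleanly; otherwise measurable selection would be the main obstacle.
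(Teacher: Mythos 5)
Your proof is correct and takes essentially the same approach as the paper: split off the sign randomness via symmetry, measurably select the witnessing $D$ as a function of $|A|$, apply Khintchine plus the $\chi^2$-type subgaussian concentration bound conditionally on $|A|$, and union-bound over the $2^n$ vertices of the cube. The paper packages the conditioning slightly differently---it partitions $\Event$ into the events $\Event_D=\Event\cap\{f(|A|)=D\}$ and bounds $\P_{\Event_D}\{\|ADv\|>t\}$ by a supremum over all deterministic matrices $B$ with the prescribed row-norm bound---but this is the same conditioning on $|A|$ in disguise, and the estimates that follow are identical (the paper uses $T=1+\ln 2$ where you use $T=2$; both absorb the factor $2^n$).
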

\begin{proof}
Fix any admissible $K$ and $\F$. Clearly, for any $n\times n$ matrix $B$ and a diagonal matrix $D$,
the Euclidean norms of rows of $BD$ and $|B|D$ are the same.
Hence, we may assume that there is a Borel function $f:\R_{+}^{n\times n}\to \F$ such that
$$\Event=\bigl\{\mbox{all rows of }A\,f(|A|)\mbox{ have norms at most }K\sqrt{n}\bigr\}.$$
For any $D\in\F$, let
$$\Event_D:=\Event\cap\{f(|A|)=D\}.$$
Without loss of generality, $\P(\Event_D)>0$ for any $D\in\F$.

Next, as the unit cube $[-1,1]^n$ is the convex hull of its vertices $V=\{-1,1\}^n$, we have
\begin{equation}\label{infty to 2 representation}
\|Af(|A|)\|_{\infty\to 2}=\sup\limits_{y \in B^n_{\infty}} \|Af(|A|) y\| =   \sup\limits_{v \in V} \|Af(|A|) v\|.
\end{equation}

Note that, given event $\Event_D$, the entries of $A f(|A|)=AD$ are symmetrically distributed,
so the distribution of $ADv$ given $\Event_D$ is the same for any vertex $v\in V$. Fix a vertex $v$.

Observe that for any $t>0$ we have
\begin{equation}\label{bound via sup}
\P_{\Event_D}\{\|ADv\|>t\}\le\sup\limits_{B}\P\{\|\widetilde BDv\|>t\},
\end{equation}
where by $\P_{\Event_D}$ we denote the conditional probability given $\Event_D$
and the supremum is taken over all matrices $B=(b_{ij})$ such that the rows of $BD$ have
Euclidean norms at most $K\sqrt{n}$,
and $\widetilde B=(r_{ij}b_{ij})$, with $r_{ij}$ being jointly independent Rademacher ($\pm 1$) variables.
Fix any admissible $B=(b_{ij})$. 

Then the variables $\langle\widetilde B Dv,e_i\rangle$, $i=1,2,\dots,n$,
are jointly independent and, in view of Lemma~\ref{Khintchine} and the choice of $B$, each variable
$K^{-1}n^{-1/2}\langle\widetilde B Dv,e_i\rangle$ is $C_{\text{\tiny\ref{Khintchine}}}$-subgaussian.
By Lemma~\ref{sum of subgaussians}, there is a universal constant
$C>0$ such that
$$\P\{\|\widetilde BDv\|>CKn\} =
\P\Bigl\{\frac{1}{n} \sum_{i=1}^n \langle\widetilde B Dv,e_i\rangle^2 > (CK)^2 n \Bigr\} \le \exp\bigl(-(1 + \ln 2) n\bigr).$$
Then, taking a union bound over $2^n$ vertices of the unit cube and
using \eqref{bound via sup} and \eqref{infty to 2 representation}, we get an estimate
$$\P_{\Event_D}\{\|AD\|_{\infty\to 2}>CKn\}\le 2^n \cdot \sup\limits_{B}\P\{\|\widetilde BDv\|>CKn\} \le \exp(-n).$$
Finally, clearly
$$\P\{\|AD\|_{\infty\to 2}>CKn\}\le \P(\Event^c)+\sum\limits_D\P_{\Event_D}\{\|AD\|_{\infty\to 2}>CKn\}\P(\Event_D)
\le \P(\Event^c)+\exp(-n),$$
and the result follows.
\end{proof}

\begin{prop}\label{parallelepiped norm estimate sym}
Let $\delta\in(0,1]$ and let $A=(a_{ij})$ be an $n \times n$ random matrix satisfying \eqref{probab model}, with symmetrically distributed entries.
Then
$$\P\bigl\{\exists D\in \DiagDyadic_n:\det D\ge\exp(-\delta n)\mbox{ and }\|AD\|_{\infty\to 2}\le
\frac{C_{\text{\tiny\ref{parallelepiped norm estimate sym}}}}{\sqrt{\delta}}n\bigr\}
\ge 1-2\exp(-\delta n/4).$$
\end{prop}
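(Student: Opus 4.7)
The plan is to combine Remark~\ref{expectation remark}, Markov's inequality, and Proposition~\ref{parallelepipeds sym}. Remark~\ref{expectation remark} produces a dyadic contraction $\widetilde D$ that keeps the rows of $A\widetilde D$ short and controls $\Exp(\det \widetilde D)^{-1/2}$; Markov converts this into a high-probability lower bound on $\det \widetilde D$; Proposition~\ref{parallelepipeds sym} then promotes the row-wise control to an operator-norm bound on $A\widetilde D$, using symmetry of the entries.

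First, I would invoke Remark~\ref{expectation remark} with the parameter $\delta$ replaced by $\delta/4$. This yields a random contraction $\widetilde D$ taking values in $\DiagDyadic_n$ such that every row of $A\widetilde D$ has Euclidean norm at most $(2C/\sqrt{\delta})\sqrt{n}$ everywhere on the probability space, and $\Exp(\det \widetilde D)^{-1/2}\le\exp(\delta n/4)$. Tracing through the construction in Proposition~\ref{vector theorem} (where $D$ is defined as a Borel function of $(|x_i|^p)_i$) and in Proposition~\ref{bounded rows and det} (where the row-contractions $D_i$ are combined), and noting that the dyadic rounding in Remark~\ref{expectation remark} is deterministic, the matrix $\widetilde D$ is a Borel function of $|A|$ — the measurability hypothesis required to apply Proposition~\ref{parallelepipeds sym}.

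Next, by Markov's inequality applied to $(\det \widetilde D)^{-1/2}$,
\[
\P\bigl\{\det \widetilde D < \exp(-\delta n)\bigr\} = \P\bigl\{(\det \widetilde D)^{-1/2} > \exp(\delta n/2)\bigr\} \le \exp(\delta n/4-\delta n/2) = \exp(-\delta n/4).
\]
Set $\F:=\{D\in\DiagDyadic_n:\det D\ge\exp(-\delta n)\}$, a countable subset of $\Diag_n$ since $\DiagDyadic_n$ itself is countable, and put $K:=2C/\sqrt{\delta}$. Letting $\Event$ denote the event from Proposition~\ref{parallelepipeds sym} associated to this $\F$ and $K$, the random matrix $\widetilde D$ lies in $\F$ and witnesses $\Event$ on $\{\det \widetilde D\ge\exp(-\delta n)\}$, so $\P(\Event)\ge 1-\exp(-\delta n/4)$.

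Finally, Proposition~\ref{parallelepipeds sym} yields
\[
\P\bigl\{\exists D\in\F:\|AD\|_{\infty\to 2}\le C_0 K n\bigr\}\ge \P(\Event)-\exp(-n)\ge 1-\exp(-\delta n/4)-\exp(-n),
\]
and since $\delta\le 1$ forces $\exp(-n)\le\exp(-\delta n/4)$, this is at least $1-2\exp(-\delta n/4)$. The constant $C_0 K = 2CC_0/\sqrt{\delta}$ has the required form. The proof is essentially a clean assembly of earlier results; the only points that deserve attention — neither a real obstacle — are the measurability of $\widetilde D$ with respect to $|A|$ and the countability of $\F$, together with the bookkeeping involved in replacing $\delta$ by $\delta/4$ in Remark~\ref{expectation remark}.
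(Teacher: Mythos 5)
Your proof is correct and follows the same route as the paper: apply Remark~\ref{expectation remark} (with $\delta$ scaled down by a constant factor), use Markov's inequality on $(\det\widetilde D)^{-1/2}$ to lower-bound $\P\{\det\widetilde D\ge\exp(-\delta n)\}$, and then feed the resulting event into Proposition~\ref{parallelepipeds sym} with $\F$ the dyadic contractions of large determinant. One minor remark: the aside about $\widetilde D$ being a Borel function of $|A|$ is not actually a hypothesis of Proposition~\ref{parallelepipeds sym} --- the event $\Event$ there is a pure existence statement over the countable family $\F$, and the measurable selection $f$ is constructed inside the proof of that proposition, not assumed --- so that sentence can be dropped without loss.
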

\begin{proof}
Fix any $\delta\in(0,1]$.
In view of Remark~\ref{expectation remark}, there is
a random contraction $D$ taking values in $\DiagDyadic_n$ such that each row of $AD$ has the Euclidean norm at most
$\frac{C}{\sqrt{\delta}}\sqrt{n}$ and
$\Exp\det D^{-1/2}\le \exp(\delta n/4)$. Denote by $\Event$ the event
$$\Event:=\bigl\{\det D\ge\exp(-\delta n)\bigr\}.$$
In view of the conditions on $D$ and Markov's inequality, we have
$$\P(\Event)\ge 1-\exp(-\delta n/4).$$
Hence, by Proposition~\ref{parallelepipeds sym}, taking $\F$ to be the set of all contractions from $\DiagDyadic_n$ having determinant at least
$\exp(-\delta n)$, we obtain
\begin{align*}
\P\bigl\{\exists D\in \DiagDyadic_n:\det D\ge\exp(-\delta n)
\mbox{ and }\|AD\|_{\infty\to 2}
\le\frac{C_{\text{\tiny\ref{parallelepiped norm estimate sym}}}}{\sqrt{\delta}}n\bigr\}
\ge 1-\exp(-\delta n/4)-\exp(-n)
\end{align*}
for a universal constant $C_{\text{\tiny\ref{parallelepiped norm estimate sym}}}>0$.
\end{proof}

For the next lemma we will need the following definition (essentially taken from \cite{MS}).
Let $S$ be a finite set and $d$ be a pseudometric on $S$. We say that $(S,d)$ is {\it of length at most $\ell$}
(for some $\ell>0$) if there is $n\in\N$, positive numbers $b_1,b_2,\dots,b_n$ with $\|(b_1,b_2,\dots,b_n)\|\le\ell$
and a sequence $(S_k)_{k=0}^n$ of partitions of $S$ such that
\begin{enumerate}
\item $S_0=\{S\}$;
\item $S_n=\{\{s\}\}_{s\in S}$;
\item $S_k$ is a refinement of $S_{k-1}$ for all $k=1,2,\dots,n$;
\item For each $k\in\{1,2,\dots,n\}$ and any $Q,Q'\in S_k$ such that $Q\cup Q'$ is a subset of an element of $S_{k-1}$,
there is a one-to-one mapping $\phi:Q\to Q'$ such that $d(s,\phi(s))\le b_k$ for all $s\in Q$.
\end{enumerate}
In particular, the above conditions on $S_k$ imply that all elements of $S_k$ have the same cardinality.
\begin{theor}[see {\cite[Theorem~7.8]{MS}}]\label{MS theorem}
Let $(S,d)$ be a finite pseudometric space of length at most $\ell$ and let $\mu$ be the normalized counting measure on $S$.
Then for any function $f:S\to\R$ satisfying $|f(s)-f(s')|\le d(s,s')$ ($s,s'\in S$) and all $t>0$ we have
$$\mu\Bigl\{\Bigl|f-\int f\,d\mu\Bigr|\ge t \Bigr\}\le 2\exp\Bigl(-\frac{t^2}{4\ell^2}\Bigr).$$
\end{theor}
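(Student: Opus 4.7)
The plan is to establish this concentration inequality via a Doob-martingale argument along the filtration induced by the partition sequence $(S_k)_{k=0}^n$, followed by an application of Azuma--Hoeffding. First, for each $k$ and $s\in S$, let $Q_k(s)$ denote the unique element of $S_k$ containing $s$, and set
$$f_k(s):=\frac{1}{|Q_k(s)|}\sum_{s'\in Q_k(s)}f(s').$$
By the refinement property of $(S_k)$, the sequence $(f_k)$ is a martingale with respect to $\mu$ and the $\sigma$-algebras generated by the $S_k$. Conditions 1 and 2 give $f_0\equiv\int f\,d\mu$ and $f_n=f$, so the quantity to control is the total increment $f_n-f_0$.

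The heart of the argument is to show $\|f_k-f_{k-1}\|_\infty\le b_k$ for every $k$. I would fix $R\in S_{k-1}$ and write $R=\bigsqcup_j Q_j$ with $Q_j\in S_k$; all $Q_j$ have equal cardinality (forced by condition 4, which produces bijections between them), so $f_{k-1}|_R$ is the unweighted mean of the constants $f_k|_{Q_j}$. For any two indices $j,j'$, condition 4 supplies a bijection $\phi:Q_j\to Q_{j'}$ with $d(s,\phi(s))\le b_k$, and the 1-Lipschitz assumption on $f$ then gives
$$\bigl|f_k|_{Q_j}-f_k|_{Q_{j'}}\bigr|=\Bigl|\frac{1}{|Q_j|}\sum_{s\in Q_j}\bigl(f(s)-f(\phi(s))\bigr)\Bigr|\le b_k.$$
Consequently $f_k|_R$ takes values in an interval of length at most $b_k$; since $f_{k-1}|_R$ is the average of those values, $|f_k-f_{k-1}|\le b_k$ throughout $R$, as desired.

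Finally, the standard Azuma--Hoeffding bound applied to the martingale $(f_k-f_0)_{k=0}^n$ with pointwise increment bounds $b_k$ yields
$$\mu\Bigl\{\Bigl|f-\int f\,d\mu\Bigr|\ge t\Bigr\}\le 2\exp\Bigl(-\frac{t^2}{2\sum_{k=1}^n b_k^2}\Bigr)\le 2\exp\Bigl(-\frac{t^2}{2\ell^2}\Bigr),$$
which is in fact stronger than the stated bound by a factor of two in the exponent (so the looser constant $1/(4\ell^2)$ in the theorem leaves slack for alternative conventions).

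The main obstacle is the bounded-differences step: condition 4 only furnishes \emph{pointwise} comparisons $d(s,\phi(s))\le b_k$ between matched pairs, and one must pass from these to a bound on the oscillation of the conditional averages $f_k|_{Q_j}$ across a single parent cell $R$. This uses both the 1-Lipschitz assumption on $f$ (to convert pointwise metric comparisons into pointwise $f$-comparisons) and the equal-cardinality of the pieces within $R$ (so that the averages align properly under the bijection $\phi$). Once the martingale increments are bounded cell-by-cell, the concentration estimate is a routine invocation of Azuma--Hoeffding and requires no further input from the structure of $(S,d)$.
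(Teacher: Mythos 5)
Your proof is correct. The paper itself does not prove this statement — it simply cites \cite[Theorem~7.8]{MS} and remarks that the extension from metrics to pseudometrics is routine — so there is no in-paper argument to compare against. Your Doob-martingale/Azuma argument is the standard route to this result (and is essentially the one in Milman--Schechtman): the $f_k$ are the conditional expectations of $f$ along the filtration generated by the nested partitions $(S_k)$, the equal-cardinality property and the bijections $\phi$ of condition~4 combine with the $1$-Lipschitz hypothesis to give the pointwise increment bound $\|f_k-f_{k-1}\|_\infty\le b_k$, and Azuma--Hoeffding then delivers a bound with $2\ell^2$ in the denominator, which is even slightly stronger than the stated $4\ell^2$. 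All the hypotheses are used exactly where they must be; no gaps.
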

\begin{rem}
In \cite{MS}, the above theorem is formulated for metric spaces. It is easy to see that passing to pseudometrics
does not change the picture.
\end{rem}

Denote by $\Pi_n$ the set of permutations of $[n]:=\{1,2,\dots,n\}$.

\begin{lemma}\label{cor of azuma}
Let $y=(y_1,y_2,\dots,y_n)$ be a non-zero vector
and $v = (v_1, v_2, \dots, v_n)$ be a vertex of the cube $[-1,1]^n$.
Further, let $\mu$ be the normalized counting measure on $\Pi_n$. Define
a function $f:\Pi_n\to\R$ as
$$f(p):=\sum\limits_{j=1}^n v_{p(j)}y_j,\;\;\;p\in\Pi_n.$$
Then
$$\mu\Bigl\{\Bigl|f-\int f\,d\mu\Bigr|\ge t\Bigr\}\le 2\exp\Bigl(-\frac{t^2}{64\|y\|^2}\Bigr),\;\; t>0.$$
\end{lemma}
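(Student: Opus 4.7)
The plan is to apply Theorem~\ref{MS theorem} to $(\Pi_n,d)$ with $d(p,p'):=|f(p)-f(p')|$, so that $f$ is tautologically $1$-Lipschitz. Everything then reduces to exhibiting a chain of partitions of $\Pi_n$ demonstrating that the length of this pseudometric is at most $4\|y\|$, from which the factor $64\|y\|^2$ in the exponent follows from the $4\ell^2$ denominator in Theorem~\ref{MS theorem}.

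I will first assume, without loss of generality, that $|y_1|\ge|y_2|\ge\cdots\ge|y_n|$: any permutation $\sigma$ applied to the indices of $y$ can be absorbed into the uniform random $p$ by the substitution $p\mapsto p\circ\sigma^{-1}$, which preserves both the law of $f$ under $\mu$ and $\|y\|$. For $k=0,1,\ldots,n$, let $S_k$ be the partition of $\Pi_n$ according to the ordered tuple $(p(1),\ldots,p(k))$. Conditions (1)--(3) of the length definition are immediate. For condition (4), take two distinct cells $Q,Q'\in S_k$ lying inside a common parent cell $R\in S_{k-1}$, and let $a,b$ be the (distinct) values taken by $p(k)$ on $Q$ and $Q'$, respectively. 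Define the bijection
$$\phi\colon Q\to Q',\qquad \phi(p):=p\circ(k,\,p^{-1}(b)).$$
Since $b\notin\{p(1),\ldots,p(k)\}$ for $p\in Q$, the index $m:=p^{-1}(b)$ satisfies $m>k$, so $\phi(p)$ agrees with $p$ on positions below $k$ and sends $k$ to $b$, placing $\phi(p)$ in $Q'$; the inverse map $p'\mapsto p'\circ(k,p'^{-1}(a))$ shows $\phi$ is a bijection. A direct expansion of $f$ gives
$$f(p)-f(\phi(p))=(v_a-v_b)(y_k-y_m),$$
and since $v_a,v_b\in\{-1,1\}$ and $|y_m|\le|y_k|$ by the sorted order,
$$d(p,\phi(p))\le 2(|y_k|+|y_m|)\le 4|y_k|.$$
Hence (iv) holds with $b_k:=4|y_k|$, so $\|(b_1,\ldots,b_n)\|=4\|y\|$ and $(\Pi_n,d)$ has length at most $4\|y\|$. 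Theorem~\ref{MS theorem} then yields the claim at once.

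The one delicate ingredient is the initial sorting of $|y|$. Without it, the bijection $\phi$ still swaps positions $k$ and $m>k$, but $m$ depends on $p$, so the uniform bound for $d(p,\phi(p))$ over $p\in Q$ involves $\max_{m>k}|y_m|$; summing the squares of these suprema over $k$ need not reproduce $16\|y\|^2$. Sorting is exactly what makes $\max_{m\ge k}|y_m|$ collapse to $|y_k|$, so that the squared steps $b_k^2=16|y_k|^2$ telescope cleanly to $16\|y\|^2$.
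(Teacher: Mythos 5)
Your proof is correct and follows essentially the same route as the paper's: sort $|y|$ in decreasing order, take the nested partitions of $\Pi_n$ by the prefix $(p(1),\dots,p(k))$, use the transposition bijection $\phi(p)=p\circ(k,p^{-1}(b))$ between sibling cells, observe $f(p)-f(\phi(p))=(v_a-v_b)(y_k-y_m)$ with $m>k$ so that $d(p,\phi(p))\le 4|y_k|$, and invoke Theorem~\ref{MS theorem} with $\ell=4\|y\|$. Your closing remark about why the sorting is essential is a useful sanity check but is the same consideration the paper implicitly relies on.
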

\begin{proof}
Without loss of generality, we can assume that $|y_j|\ge |y_{j+1}|$ ($j=1,2,\dots,n-1$).
Define a pseudometric $d$ on $\Pi_n$:
for any $p,q\in\Pi_n$ let
\begin{equation*}\label{condition on f}
d(p,q):=|f(p)-f(q)|.
\end{equation*}
Further, we define a sequence of partitions $(\Pi_{n,k})_{k=0}^n$ of $\Pi_n$:
let $\Pi_{n,0}:=\{\Pi_n\}$ and for each $k=1,2,\dots,n$, let
$\Pi_{n,k}$ consist of all subsets of $\Pi_n$ of the form
$$\{p\in\Pi_n:\;p(1)=i_1,p(2)=i_2,\dots,p(k)=i_k\}$$
for all $\{i_1,i_2,\dots,i_k\}\subset [n]$.

Now, let $k\in\{1,2,\dots,n\}$ and let $Q,Q'\in\Pi_{n,k}$ be such that $Q\cup Q'$ is a subset of an element of $\Pi_{n, k-1}$.
Note that there are numbers $i_1,i_2,\dots,i_k$, $i_k'$ such that
$p(j)=i_j$ for all $j<k$ and $p\in Q\cup Q'$; $p(k)= i_k$ for all $p\in Q$ and $p(k)=i_k'$ for all $p\in Q'$.
Define a one-to-one mapping $\phi:Q\to Q'$ by
$$\phi(p)(j):=p(j)\mbox{ for }j\neq k,p^{-1}(i_k');\;\;\phi(p)(k):=i_k';\;\;\phi(p)(p^{-1}(i_k')):=i_k.$$
For any $p\in Q$, we have
$$d(p,\phi(p))\le 2|y_k|+2|y_{p^{-1}(i_k')}|\le 4|y_k|,$$
with the last inequality due to the fact that $p^{-1}(i_k')\ge k$.
Thus, the space $(\Pi_n,d)$ is of length at most $4\|y\|$.
Applying Theorem~\ref{MS theorem}, we get the result.
\end{proof}

The next statement shall be used in a symmetrization argument within the proof of Theorem~\ref{parallelepiped norm estimate};
we think it may be of interest in itself.

\begin{prop}\label{permutation model}
Let $B=(b_{ij})$ be a non-random $n\times n$ matrix such that the Euclidean norm of every row is at most $\sqrt{n}$
and such that
$$\Bigl|\sum\limits_{j=1}^n b_{ij}\Bigr|\le\sqrt{n},\;\;i=1,2,\dots,n.$$
Further, let $\pi_i$ ($i=1,2,\dots,n$) be independent random permutations uniformly distributed on $\Pi_n$, and denote by
$\widetilde B=(\widetilde b_{ij})$ the random $n\times n$ matrix with entries defined by
$$\widetilde b_{ij}:=b_{i,\pi_i(j)}.$$
Then
$$\P\{\|\widetilde B\|_{\infty\to 2}\le C_{\text{\tiny\ref{permutation model}}}n\}\ge 1-\exp(-n)$$
for a universal constant $C_{\text{\tiny\ref{permutation model}}}>0$.
\end{prop}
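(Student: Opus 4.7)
The plan is to reduce the operator norm to a supremum over vertices of the cube, as in \eqref{infty to 2 representation}, and then, for each fixed vertex, combine Lemma~\ref{cor of azuma} with Lemma~\ref{sum of subgaussians}, paying for the union bound with $2^n$.

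First I would fix a vertex $v\in V:=\{-1,1\}^n$ and look at
$$
(\widetilde B v)_i=\sum\limits_{j=1}^n v_j\,b_{i,\pi_i(j)}=\sum\limits_{k=1}^n v_{\pi_i^{-1}(k)}\,b_{ik},\;\;i=1,2,\dots,n.
$$
Since the $\pi_i$ are jointly independent and $\pi_i^{-1}$ is again uniform on $\Pi_n$, the variables $(\widetilde B v)_i$ are independent across $i$. Applying Lemma~\ref{cor of azuma} with the roles of $v$ and $y$ matching the vertex $v$ and the $i$-th row of $B$ (whose Euclidean norm is at most $\sqrt{n}$), one gets, for every $t>0$,
$$
\P\bigl\{\bigl|(\widetilde B v)_i-\Exp(\widetilde B v)_i\bigr|\ge t\bigr\}\le 2\exp\bigl(-t^2/(64n)\bigr),
$$
so that each centered coordinate of $\widetilde B v$ is $C\sqrt{n}$-subgaussian for a universal constant~$C$.

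Next I would control the mean. By symmetry of the uniform permutation,
$$
\Exp(\widetilde B v)_i=\Bigl(\frac{1}{n}\sum\limits_{k=1}^n b_{ik}\Bigr)\sum\limits_{j=1}^n v_j,
$$
which is bounded in absolute value by $\frac{1}{\sqrt{n}}\cdot n=\sqrt{n}$ thanks to the row-sum hypothesis $\bigl|\sum_k b_{ik}\bigr|\le\sqrt{n}$. Hence $\|\Exp \widetilde B v\|\le n$ deterministically, so it remains to bound the centered sum $\sum_i\bigl((\widetilde B v)_i-\Exp(\widetilde B v)_i\bigr)^2$. Applying Lemma~\ref{sum of subgaussians} with $T:=2+\ln 2$ to the independent centered $1$-subgaussian variables $(C\sqrt{n})^{-1}\bigl((\widetilde B v)_i-\Exp(\widetilde B v)_i\bigr)$, we obtain
$$
\P\bigl\{\|\widetilde B v-\Exp \widetilde B v\|>C'n\bigr\}\le\exp\bigl(-(2+\ln 2)n\bigr)
$$
for some universal $C'>0$, and therefore $\P\bigl\{\|\widetilde B v\|>(C'+1)n\bigr\}\le\exp(-(2+\ln 2)n)$.

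Finally, the identity $\|\widetilde B\|_{\infty\to 2}=\sup_{v\in V}\|\widetilde B v\|$ together with a union bound over the $2^n$ vertices of $[-1,1]^n$ gives
$$
\P\bigl\{\|\widetilde B\|_{\infty\to 2}>C_{\text{\tiny\ref{permutation model}}}n\bigr\}\le 2^n\exp\bigl(-(2+\ln 2)n\bigr)\le\exp(-n),
$$
with $C_{\text{\tiny\ref{permutation model}}}:=C'+1$, which is the claimed bound. The main obstacle is the mean-centering step: without the hypothesis $|\sum_j b_{ij}|\le\sqrt{n}$, the expectation $\Exp(\widetilde B v)_i$ could be as large as $\|B\|\cdot\sqrt{n}$ along the all-ones vertex, which would destroy the linear-in-$n$ bound; the row-sum assumption is precisely what allows the deterministic bound $\|\Exp\widetilde B v\|\le n$ that absorbs the shift.
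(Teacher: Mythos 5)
Your proof is correct and follows essentially the same route as the paper: pass to vertices of the cube via \eqref{infty to 2 representation}, use the change of variables $\pi_i\mapsto\pi_i^{-1}$ to identify each coordinate with the permutation sum in Lemma~\ref{cor of azuma}, deduce subgaussianity of the centered coordinates, invoke Lemma~\ref{sum of subgaussians}, bound the deterministic mean using the row-sum hypothesis, and pay for the $2^n$ union bound with the exponent. The only cosmetic difference is that you separate the shift via the triangle inequality on Euclidean norms whereas the paper uses the pointwise bound $\sum_i\xi_i^2\le 2\sum_i(\xi_i-\Exp\xi_i)^2+2\sum_i(\Exp\xi_i)^2$; both yield the same conclusion.
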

\begin{proof}
We will show that for any $v\in\{-1,1\}^n$ we have
$$\P\{\|\widetilde Bv\|> C_{\text{\tiny\ref{permutation model}}}n\}\le \exp(-n-n\ln 2)$$
for a sufficiently large universal constant $C_{\text{\tiny\ref{permutation model}}}$
and then take the union bound over the vertices of the cube.

Fix any $v=(v_1,v_2,\dots,v_n)\in\{-1,1\}^n$ and let $m$ be the number
of ones in $(v_1,\dots,v_n)$.
Clearly, the random variables $\langle \widetilde Bv,e_i\rangle$ ($i=1,2,\dots,n$)
are independent. Next, for a fixed $i$, the distribution of $\langle \widetilde Bv,e_i\rangle$ coincides with
that of the variable $\xi_i:=\sum_{j=1}^n v_{\pi_i(j)}b_{ij}$.
By Lemma~\ref{cor of azuma} and in view of the condition on the rows of $B$, we have
$$\P\bigl\{|\xi_i-\Exp\xi_i|>\tau\bigr\}\le 2\exp\Bigl(-\frac{\tau^2}{64n}\Bigr),\;\;\tau>0.$$
Hence, the variables $n^{-1/2}(\xi_i-\Exp\xi_i)$ ($i=1,2,\dots,n$) are $C$-subgaussian for a universal constant $C>0$.
In view of Lemma~\ref{sum of subgaussians}, we get that
\begin{equation}\label{xi minus exp}
\P\Bigl\{\sum\limits_{i=1}^n (\xi_i-\Exp\xi_i)^2>\widetilde Cn^2\Bigr\}\le\exp(-n-n\ln 2)
\end{equation}
for some constant $\widetilde C>0$. Finally, observe that
$$\sum\limits_{i=1}^n \xi_i^2\le 2\sum\limits_{i=1}^n (\xi_i-\Exp\xi_i)^2+2\sum\limits_{i=1}^n\bigl(\Exp\xi_i\bigr)^2
\;\;\;\mbox{(deterministically)},$$
so, applying the estimate
$$\bigl|\Exp\xi_i\bigr|=\Bigl|\frac{2m-n}{n}\sum\limits_{j=1}^n b_{ij}\Bigr|\le \sqrt{n}$$
and \eqref{xi minus exp}, we obtain
$$\P\bigl\{\|\widetilde Bv\|^2>(2\widetilde C+2)n^2\bigr\}
=\P\Bigl\{\sum\limits_{i=1}^n \xi_i^2>(2\widetilde C+2)n^2\Bigr\}\le\exp(-n-n\ln 2).$$
\end{proof}

\begin{proof}[Proof of Theorem~\ref{parallelepiped norm estimate}]
Let $\widetilde A$ be an independent copy of $A$.
Obviously
$$\Exp\Bigl(\sum\limits_{j=1}^n \widetilde a_{ij}\Bigr)^2=\Exp\sum\limits_{j=1}^n \widetilde a_{ij}^2=n$$
for every $i=1,2,\dots,n$. Then, in view of Markov's inequality, each row of $\widetilde A$ satisfies
$$\Bigl|\sum\limits_{j=1}^n \widetilde a_{ij}\Bigr|\le \sqrt{\frac{32n}{\delta}}\;\;\mbox{and}\;\;
\sum\limits_{j=1}^n {\widetilde a_{ij}}^2\le \frac{32n}{\delta}$$
with probability at least $1-\delta/16> \exp(-\delta/8)$.
Denote by $\widetilde \Event$ the event
$$\widetilde\Event:=\Bigl\{\Bigl|\sum\limits_{j=1}^n \widetilde a_{ij}\Bigr|\le \sqrt{\frac{32n}{\delta}}\;\;\mbox{and}\;\;
\sum\limits_{j=1}^n {\widetilde a_{ij}}^2\le \frac{32n}{\delta}\;\;\mbox{for all }i=1,2,\dots,n\Bigr\}.$$
In view of the above, $\P(\widetilde\Event)\ge\exp(-\delta n/8)$.
Let $\pi_1,\pi_2,\dots,\pi_n$ be random permutations uniformly distributed on $\Pi_n$ and jointly
independent with $\widetilde A$, and denote by $\widetilde B=(\widetilde b_{ij})$ the random matrix with
entries $\widetilde b_{ij}:=\widetilde a_{i,\pi_i(j)}$ ($i,j\leq n$).
Then Proposition~\ref{permutation model} yields
$$\P\bigg\{\|\widetilde B\|_{\infty\to 2}\le C_{\text{\tiny\ref{permutation model}}}\sqrt{n}\max\limits_{i\leq n}
\Big(\sum_{j=1}^n {\widetilde a_{ij}}^2\Big)^{1/2}\;\mid\;\widetilde A\bigg\}\ge 1-\exp(-n),$$
whence, in particular,
$$\P\big\{\|\widetilde B\|_{\infty\to 2}\le C_{\text{\tiny\ref{permutation model}}}\sqrt{32/\delta}\,n\,|\,\widetilde\Event\big\}\ge 1-\exp(-n).$$
But $\widetilde B$ is equidistributed with $\widetilde A$ given $\widetilde\Event$, so that
$$\P\big\{\|\widetilde A\|_{\infty\to 2}\le C_{\text{\tiny\ref{permutation model}}}\sqrt{32/\delta}\,n\,|\,\widetilde\Event\big\}\ge 1-\exp(-n).$$
Clearly, $\|\widetilde AD\|_{\infty\to 2}\le\|\widetilde A\|_{\infty\to 2}$ for any contraction $D\in\Diag_n$ (deterministically),
so we obtain for the event
$\Event_1:=\big\{\|\widetilde A D\|_{\infty\to 2}\le C_{\text{\tiny\ref{permutation model}}}\sqrt{32/\delta}\,
n\;\;\mbox{for all }D\in\Diag_n\big\}$:
$$\P(\Event_1)\ge(1-\exp(-n))\P(\widetilde\Event)\ge\frac{1}{2}\exp(-\delta n/8).$$
Next, the matrix $2^{-1/2}(A-\widetilde A)$ has symmetrically distributed entries, and satisfies conditions
of Proposition~\ref{parallelepiped norm estimate sym}. Hence,
\begin{align*}
\P&\Bigl\{\|(A-\widetilde A)D\|_{\infty\to 2}\leq C_{\text{\tiny\ref{parallelepiped norm estimate sym}}}\sqrt{2/\delta}\,n
\;\;\mbox{for some }D\in\DiagDyadic_n\mbox{ with }\det D\ge\exp(-\delta n)\Bigr\}\\
&\geq 1- 2\exp(-\delta n/4).
\end{align*}
Conditioning on $\Event_1$, we get
\begin{align*}
\P&\Bigl\{\|(A-\widetilde A)D\|_{\infty\to 2}\leq C_{\text{\tiny\ref{parallelepiped norm estimate sym}}}\sqrt{2/\delta}\,n
\;\;\mbox{for some }D\in\DiagDyadic_n\mbox{ with }\det D\ge\exp(-\delta n)\,|\,\Event_1\Bigr\}\\
&\geq 1-\frac{2\exp(-\delta n/4)}{\P(\Event_1)}\\
&\geq 1- 4\exp(-\delta n/8).
\end{align*}
Note that, given $\Event_1$, we have $\|AD\|_{\infty\to 2}\leq \|(A-\widetilde A)D\|_{\infty\to 2}
+C_{\text{\tiny\ref{permutation model}}}\sqrt{32/\delta}\,n$
{\it for all} contractions $D\in\Diag_n$. Combining this with the last formula, we obtain
\begin{align*}
\P\Bigl\{&\|AD\|_{\infty\to 2}\leq C_{\text{\tiny\ref{parallelepiped norm estimate sym}}}\sqrt{2/\delta}\,n
+C_{\text{\tiny\ref{permutation model}}}\sqrt{32/\delta}\,n\\
&\mbox{for some }D\in\DiagDyadic_n\mbox{ with }\det D\ge\exp(-\delta n)\,|\,\Event_1\Bigr\}
\geq 1- 4\exp(-\delta n/8).
\end{align*}
Finally, since $A$ is independent from $\Event_1$, the conditioning in the last estimate can be dropped,
and we obtain the statement.
\end{proof}

To complete the proof of Theorem~A, we will need two more technical lemmas:
\begin{lemma}\label{quantity}
For any $\delta\in(0,1/2]$ and all $n\in\N$ we have
$$\bigl|\bigl\{D\in\DiagDyadic_n:\,\det D\ge\exp(-\delta n)\bigr\}\bigr|\le \Bigl(\frac{2e}{\delta}\Bigr)^{4\delta n}.$$
\end{lemma}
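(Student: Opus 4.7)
The plan is to parametrise each $D\in\DiagDyadic_n$ by its vector of exponents and reduce the count to a classical binomial identity. For $D$ with diagonal entries $d_{ii}\in\{1\}\cup\{2^{-2^k}\}_{k\ge 0}$, set $m_i:=-\log_2 d_{ii}$, so that $m_i$ belongs to $M:=\{0\}\cup\{2^j:j\ge 0\}$. The determinant constraint $\det D\ge \exp(-\delta n)$ is equivalent to $\sum_{i=1}^n m_i\le \delta n/\ln 2$. Since $1/\ln 2<2$, it suffices to bound the number of tuples $(m_1,\dots,m_n)\in M^n$ with $\sum_i m_i\le T$, where $T:=\lfloor\delta n/\ln 2\rfloor\le 2\delta n$.

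I would split the count according to the number $k$ of coordinates on which $m_i\neq 0$. There are $\binom{n}{k}$ ways to choose the positions of these nonzero entries; for each such choice, the values are dyadic and in particular lie in $\{1,2,3,\dots\}$, so the number of admissible value-tuples summing to exactly $s$ is at most the number of compositions of $s$ into $k$ positive parts, i.e.\ $\binom{s-1}{k-1}$. Summing over $k\le s\le T$ and applying the hockey-stick identity, the number of admissible value-tuples is at most $\binom{T}{k}$. Therefore
$$\bigl|\bigl\{D\in\DiagDyadic_n:\,\det D\ge e^{-\delta n}\bigr\}\bigr|\le \sum_{k=0}^{T}\binom{n}{k}\binom{T}{k}=\binom{n+T}{T},$$
the last equality being Vandermonde's identity.

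It remains to estimate $\binom{n+T}{T}\le \bigl(e(n+T)/T\bigr)^T$. Since $T\le \delta n/\ln 2$, one has $e(n+T)/T=e+en/T\le e+e\ln 2/\delta$, and an elementary check shows $e+e\ln 2/\delta\le 2e/\delta$ whenever $\delta\le 2-\ln 2$, which comfortably covers the range $\delta\in(0,1/2]$. Consequently
$$\binom{n+T}{T}\le \Bigl(\frac{2e}{\delta}\Bigr)^{T}\le \Bigl(\frac{2e}{\delta}\Bigr)^{2\delta n}\le \Bigl(\frac{2e}{\delta}\Bigr)^{4\delta n},$$
using $2e/\delta\ge 1$ in the last step. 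The main (and essentially only) obstacle is recognising the combinatorial structure that unlocks Vandermonde's identity; after that the argument is routine. A small amount of care is needed to deal with the floor when $T$ is not an integer, but the generous gap between the exponents $2\delta n$ and $4\delta n$ easily absorbs any such loss.
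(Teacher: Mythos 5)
Your reduction to counting $\binom{n+T}{T}$ via compositions, the hockey-stick identity, and Vandermonde is a genuinely different (and arguably slicker) route than the paper's, which for each $k\ge 0$ observes that at most $\lfloor 2^{-k+1}\delta n\rfloor$ diagonal entries can equal $2^{-2^k}$ and then multiplies the resulting binomial factors over $k$. However, there is a real error in your final estimate. You write ``Since $T\le\delta n/\ln 2$, one has $e(n+T)/T=e+en/T\le e+e\ln 2/\delta$,'' but $T\le\delta n/\ln 2$ gives $n/T\ge\ln 2/\delta$, not $\le$: shrinking $T$ makes the base $e(n+T)/T$ larger, not smaller. As written, that chain of inequalities does not hold (and it is vacuous at $T=0$, where you divide by zero).

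The conclusion is nonetheless correct and the gap closes easily. Dispose of $T=0$ by noting the set is then a singleton, and for $T\ge 1$ observe that $T\mapsto\bigl(e(n+T)/T\bigr)^T$ is increasing in $T$, since
$$\frac{d}{dT}\Bigl[T\bigl(1+\ln(n+T)-\ln T\bigr)\Bigr]=\ln\Bigl(1+\frac{n}{T}\Bigr)+\frac{T}{n+T}>0.$$
You may therefore replace $T$ by its upper bound $\delta n/\ln 2$ and get
$$\binom{n+T}{T}\le\Bigl(e\bigl(1+\tfrac{\ln 2}{\delta}\bigr)\Bigr)^{\delta n/\ln 2}\le\Bigl(\frac{2e}{\delta}\Bigr)^{\delta n/\ln 2}\le\Bigl(\frac{2e}{\delta}\Bigr)^{2\delta n}\le\Bigl(\frac{2e}{\delta}\Bigr)^{4\delta n},$$
using $1+\ln 2/\delta\le 2/\delta$ for $\delta\le 2-\ln 2$, then $1/\ln 2<2$ and $2e/\delta\ge 1$. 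With this repair the argument is correct; the combinatorial core (compositions $\to$ hockey-stick $\to$ Vandermonde) is sound and is a nice alternative to the paper's term-by-term product bound.
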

\begin{proof}
Denote ${\mathcal S}:=\{D\in\DiagDyadic_n:\,\det D\ge\exp(-\delta n)\bigr\}$.
Note that for any matrix $D\in {\mathcal S}$
and for any $k\ge 0$, the number of diagonal elements of $D$ equal to $2^{-2^k}$
is less than $2^{-k+1}\delta n$. Hence, the cardinality of ${\mathcal S}$ can be estimated as
$$|{\mathcal S}|\le \prod_{k=0}^\infty {n\choose [2^{-k+1}\delta n]}\le
\prod_{k=0}^\infty\Bigl(\frac{e}{\delta}\Bigr)^{2^{-k+1}\delta n}2^{k2^{-k+1}\delta n}
=\Bigl(\frac{e}{\delta}\Bigr)^{4\delta n}2^{4\delta n}
=\Bigl(\frac{2e}{\delta}\Bigr)^{4\delta n}.$$
\end{proof}

\begin{lemma}\label{cube extended lemma}
For any $n\in\N$ and $K\in[2,2\sqrt{n}]$, the unit Euclidean ball $B_2^n$ can be covered
by at most $(2eK^2)^{8n/K^2}$ translates of the cube $\frac{K}{\sqrt{n}}B_\infty^n$.
\end{lemma}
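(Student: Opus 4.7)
The plan is to construct an explicit $\ell_\infty$-net $\Net$ of $B_2^n$ at scale $s:=K/\sqrt{n}$, by decomposing each $x\in B_2^n$ into its heavy and light coordinates and discretizing the heavy part on a lattice. If $K\ge\sqrt n$ then $sB_\infty^n\supset B_\infty^n\supset B_2^n$, so a single cube suffices and the bound $(2eK^2)^{8n/K^2}\ge 1$ holds trivially. Assume henceforth $K<\sqrt n$ and set $m_0:=\lfloor n/K^2\rfloor\ge 1$. For each $x\in B_2^n$, the heavy set $J(x):=\{i:|x_i|>s\}$ obeys $|J(x)|s^2\le\sum_{i\in J(x)}x_i^2\le 1$, hence $|J(x)|\le m_0$. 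Zeroing the coordinates outside $J(x)$ thus produces an $s$-approximation of $x$ in $\ell_\infty$ supported on at most $m_0$ indices.

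To discretize the heavy part, for each $J\subset[n]$ of size $\le m_0$ define
\[
G_J:=\bigl\{y\in\R^n:\ y_i\in(2s)\Z\ \text{for}\ i\in J,\ y_i=0\ \text{for}\ i\notin J,\ \|y\|_2\le 2\bigr\},
\]
and set $\Net:=\bigcup_J G_J$. Given $x\in B_2^n$ with $J:=J(x)$, rounding each $x_i$ ($i\in J$) to the nearest multiple of $2s$ yields $y\in\R^n$ with $\|x-y\|_\infty\le s$; the norm condition $\|y\|_2\le 2$ is met since $\|y\|_2\le\|x_J\|_2+s\sqrt{|J|}\le 1+s\sqrt{m_0}\le 2$, the decisive inequality $s\sqrt{m_0}\le 1$ being exactly the regime we are in. So $\Net$ is indeed an $s$-net of $B_2^n$.

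To count $|G_J|$ I argue by volume: the half-open cubes $y+[-s,s)^{|J|}$ for $y\in G_J$ are pairwise disjoint and each lies in $2B_2^{|J|}+sB_\infty^{|J|}\subset 3B_2^{|J|}$ (using $s\sqrt{|J|}\le 1$ once more). Combined with Stirling's estimate $\Vol(B_2^m)\le(2\pi e/m)^{m/2}$, this gives
\[
|G_J|\le\frac{\Vol(3B_2^{|J|})}{(2s)^{|J|}}\le\Bigl(\frac{9\pi e\,n}{2K^2|J|}\Bigr)^{|J|/2}.
\]
Since both $\binom{n}{k}$ and $(9\pi e n/(2K^2k))^{k/2}$ are increasing in $k$ on $1\le k\le m_0$, the sum over $J$ is controlled by the term $|J|=m_0$. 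Using $m_0\ge n/(2K^2)$ so that $n/(K^2m_0)\le 2$, and $\binom{n}{m_0}\le(en/m_0)^{m_0}\le(2eK^2)^{m_0}$, I obtain
\[
|\Net|\le (m_0+1)\,(2eK^2)^{m_0}\,(9\pi e)^{m_0/2}.
\]

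A direct numerical check, using $K\ge 2$, $m_0\ge 1$ and $\log(m_0+1)\le m_0$, shows this last quantity is at most $(2eK^2)^{8m_0}\le(2eK^2)^{8n/K^2}$, finishing the proof. The argument is not conceptually difficult; the main obstacle is bookkeeping the constants. The genuinely enabling inequality throughout is $s\sqrt{m_0}\le 1$, which keeps $3B_2^{|J|}$ within a constant factor of $B_2^{|J|}$ and makes the volume estimate for $|G_J|$ exponentially small in $|J|$ rather than polynomially large in $1/s$; tracking constants carefully enough to arrive at the precise exponent $8$ is the only finicky step.
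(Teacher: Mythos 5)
Your proof is correct and follows essentially the same strategy as the paper's: split each vector into a light part (coordinates of size at most $\approx K/\sqrt{n}$, which contribute nothing to the $\ell_\infty$ error) and a sparse heavy part supported on at most $O(n/K^2)$ coordinates, then control the heavy part by a volume argument and a binomial coefficient for the choice of support. The only cosmetic difference is in how the heavy part is discretized: the paper covers the set of $\lceil 4n/K^2\rceil$-sparse unit vectors by translates of the half-scale cube $\frac{K}{2\sqrt{n}}B_\infty^n$ and uses the triangle inequality, whereas you round the heavy coordinates to the lattice $(2s)\Z$ inside $2B_2^{|J|}$ and count the lattice points by a packing/volume bound — two minor variants of the same volumetric estimate, both invoking $\Vol(B_2^m)\le(2\pi e/m)^{m/2}$.
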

\begin{proof}
First, note that for any $y\in B_2^n$ we have
$$\Bigl|\Bigl\{i\le n:\,|y_i|\ge \frac{K}{2\sqrt{n}}\Bigr\}\Bigr|\le\frac{4n}{K^2}.$$
Hence, it is sufficient to show that the set $|\{y\in B_2^n:\,|\supp(y)|\le\frac{4n}{K^2}\}|$
can be covered by at most $(2eK^2)^{8n/K^2}$ translates of $\frac{K}{2\sqrt{n}}B_\infty^n$.
A simple volumetric argument, together with an estimate $\Vol(B_2^m)\le \bigl(\frac{2\pi e}{m}\bigr)^{m/2}$,
implies that $B_2^m$ can be covered by at most $7^m$ translates of $\frac{1}{\sqrt{m}}B_\infty^m$
(for any $m\in\N$). As a consequence, we obtain a covering of
$B_2^{\lceil 4n/K^2\rceil}$ by at most $7^{\lceil 4n/K^2\rceil}$ translates of $\frac{K}{2\sqrt{n}}B_\infty^n$.
Finally, the cardinality of the optimal covering of $|\{y\in B_2^n:\,|\supp(y)|\le\frac{4n}{K^2}\}|$
can be estimated from above by
$${n\choose \lceil 4n/K^2\rceil}7^{\lceil 4n/K^2\rceil}\le\bigl(2eK^2\bigr)^{8n/K^2}.$$
\end{proof}

\begin{proof}[Proof of Theorem~A]
Let $\delta\in(0,1/4]$ and $n \ge \frac{1}{4\delta}$.
First, applying Lemma~\ref{cube extended lemma} with $K=1/\sqrt{\delta}$,
we see that $B^n_2$ can be covered by $(2e/\delta)^{8n\delta}$ translates
of the dilated cube $\frac{1}{\sqrt{n \delta}}B^n_{\infty}$. Let
$$
  {\mathcal Q} = \{D\in\DiagDyadic_n:\,\det D\ge\exp(-\delta n)\}.
$$
Then, in view of Lemma~\ref{quantity}, we get that $B_\infty^n$ can be covered by at most
$(2e/\delta)^{4\delta n}\exp(\delta n)$ parallelepipeds in such a way that
for any $y\in B_\infty^n$ and $D\in{\mathcal Q}$, $y$ is covered by a translate of $D(B_\infty^n)$.
Combining the two coverings, we get a collection $\mathcal C$ of parallelepipeds covering $B_2^n$
such that
$$|\mathcal C|\leq (2e/\delta)^{4\delta n}\exp(\delta n)\cdot(2e/\delta)^{8n\delta} = \exp(\delta n + 12 n \delta \ln\frac{2e}{\delta}),$$
and for any $y\in B_2^n$ and $D\in{\mathcal Q}$, the set $\mathcal C$ contains
a translate of $D(\frac{1}{\sqrt{n \delta}}B_\infty^n)$ covering $y$.
Finally, applying Theorem~\ref{parallelepiped norm estimate}, we get that with probability
at least $1-4\exp(-\delta n/8)$ for some $D\in{\mathcal Q}$ we have
$AD(B^n_{\infty}) \subset
\frac{C_{\text{\tiny\ref{parallelepiped norm estimate}}}n}{\sqrt{\delta}} B^n_2$, implying
\begin{align*}
\P&\Big\{\forall\; x\in B_2^n\;\;\exists \;P\in\mathcal C\;\;\mbox{such that}\;\;x\in P\mbox{ and }
A(P)\subset Ax+\frac{2\cdot C_{\text{\tiny\ref{parallelepiped norm estimate}}}\sqrt{n}}{\delta}B_2^n\Big\}\\
&\geq 1-4\exp(-\delta n/8).
\end{align*}
(the multiple ``$2$'' in the last formula
appears because the translation $-Ax+A(P)$ is not origin-symmetric in general).
\end{proof}

\begin{proof}[Proof of Corollary~A]
Fix $n$ and $\delta$, and let $\mathcal C$ be the collection of parallelepipeds defined in Theorem~A.
For each $P\in\mathcal C$, choose a point $y_p\in P\cap B_2^n$, and let $\Net:=\{y_P:\,P\in\mathcal C\}$.
Then, clearly,
$$
|\Net| = |\mathcal C|\leq \exp(\delta n + 12 n \delta \ln\frac{2e}{\delta}),
$$ 
and with probability at least $1-4\exp(-\delta n/8)$ 
for every $x\in B_2^n$ there is $y=y(x)\in\Net$ with $-Ax+Ay\in \frac{C\sqrt{n}}{\delta}B_2^n$.
In short,
$$\P\Big\{A(B_2^n)
\subset \bigcup_{y\in A(\Net)}\bigl(y
+\frac{C\sqrt{n}}{\delta}B_2^n\bigr)\Big\}\geq 1-4\exp(-\delta n/8).$$
\end{proof}

\section{The smallest singular value --- Preliminaries}\label{sec prelim}

As we already mentioned in the introduction, the proof of Theorem~B heavily
relies on results obtained by Rudelson and Vershynin in papers \cite{RV square} and \cite{RV rectangular}.
In this section, we will state several intermediate results from those papers that we will need
in Section~\ref{ssv section} to complete our proof.

A crucial step in the proof of \cite[Theorem~1.2]{RV square} is a decomposition of the unit sphere into
sets of ``compressible'' and ``incompressible'' vectors.
\begin{definition}[Sparse, compressible and incompressible vectors]
Fix parameters $\theta, \rho \in (0, 1)$. A vector $x \in \R^n$ is called {\it $\theta n$-sparse} if $|\supp(x)| \le \theta n$.
A vector $x \in S^{n-1}$ is called {\it compressible} if $x$ is within Euclidean distance $\rho$
from the set of all $\theta n$-sparse vectors. Otherwise, $x$ will be called {\it incompressible}.
The set of all compressible unit vectors will be denoted by $\Comp_n(\theta,\rho)$, and the set of incompressible
 vectors --- by $\Incomp_n(\theta,\rho)$. Sometimes, when the dimension $n$ or the parameters $\theta,\rho$ are
clear from the context, we will simply write $\Comp$, $\Incomp$ to denote the sets.
\end{definition}
\begin{rem}
A similar decomposition of the unit sphere was already introduced in an earlier paper \cite{LPRT} for the purpose
of bounding the smallest singular value of rectangular matrices.
\end{rem}

Obviously, for any $\varepsilon>0$ we have
$$
\P\{s_{n}(A) < \varepsilon n^{-1/2}\}  \le
\P\bigl\{\inf_{y \in \Comp} \|Ay\| < \varepsilon n^{-1/2}\bigr\}
+  \P\bigl\{\inf_{y \in \Incomp} \|Ay\| < \varepsilon n^{-1/2}\bigr\}.
$$
Treatment of the compressible vectors is simpler due to the fact the the set $\Comp$ is ``small'';
we will deal with this set in the first part of Section~\ref{ssv section}. Let us remark that, unlike in the subgaussian
result of \cite{RV square},
where an estimate for compressible vectors follows almost directly from an analogue of Lemma~\ref{tensorization lemma}
(see below) together
with a standard covering argument, in our case we will still need to use additional results (proved in Section~\ref{parallel section})
as the norm $\|A\|_{2\to 2}$ may be ``too large''.
We will need the following simple lemma:
\begin{lemma}\label{estimating comp}
For any $\theta,\rho\in(0,1]$ the set $\Comp=\Comp_n(\theta,\rho)$ admits a Euclidean $3\rho$-net $\Net\subset \Comp$
of cardinality $|\Net|\le (e/\theta)^{\theta n}\bigl(\frac{5}{\rho}\bigr)^{\theta n}$.
\end{lemma}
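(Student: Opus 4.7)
The plan is a standard two-level covering: enumerate all possible supports of the sparse vectors, and for each support build a Euclidean net on the unit sphere restricted to those coordinates. Concretely, set $m:=\lfloor\theta n\rfloor$, and for every index set $S\subset[n]$ with $|S|=m$ choose a Euclidean $\rho$-net $\Net_S$ on the sphere $S^{n-1}\cap\spn\{e_i:\,i\in S\}$. Standard volumetric arguments give $|\Net_S|\le(1+2/\rho)^m\le (3/\rho)^{\theta n}$. Finally declare $\Net:=\bigcup_{|S|=m}\Net_S$. Each element of $\Net$ is a unit vector that is already $\theta n$-sparse, so every $y\in\Net$ trivially belongs to $\Comp_n(\theta,\rho)$ (it is at distance $0$ from the set of $\theta n$-sparse vectors), which ensures the ``$\Net\subset\Comp$'' requirement.

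The main verification is that $\Net$ is a $3\rho$-net of $\Comp$. The case $\rho\ge 1/3$ (say) can be treated trivially since $\Comp\subset S^{n-1}$ and any single point of $\Comp$ gives a $2$-net. So assume $\rho<1$. Fix $x\in\Comp$. By definition there exists a $\theta n$-sparse vector $y$ with $\|x-y\|\le\rho$, and since $x\in S^{n-1}$ we have $\|y\|\ge 1-\rho>0$, so the normalization $z:=y/\|y\|$ is well defined. Then $z$ is a $\theta n$-sparse unit vector, and using $\bigl|\,\|y\|-1\bigr|\le\|y-x\|\le\rho$ we obtain
$$\|x-z\|\le\|x-y\|+\|y-z\|\le\rho+\bigl|\,\|y\|-1\bigr|\le 2\rho.$$
Now pick any $S\supset\supp(z)$ with $|S|=m$; then $z\in S^{n-1}\cap\spn\{e_i:\,i\in S\}$, so there exists $w\in\Net_S\subset\Net$ with $\|z-w\|\le\rho$, hence $\|x-w\|\le 3\rho$, as desired.

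For the cardinality bound we combine the number of supports with the size of each $\Net_S$:
$$|\Net|\le\binom{n}{m}\cdot(3/\rho)^{\theta n}\le(e/\theta)^{\theta n}\cdot(3/\rho)^{\theta n}\le(e/\theta)^{\theta n}\bigl(5/\rho\bigr)^{\theta n},$$
where we used the standard estimate $\binom{n}{m}\le(en/m)^m\le(e/\theta)^{\theta n}$. There is no real obstacle here; the only mildly delicate point is ensuring $\Net\subset\Comp$ (rather than just $\Net\subset B_2^n$), which is why I chose to net the restricted unit spheres instead of restricted Euclidean balls—then every point of the net is automatically $\theta n$-sparse, unit norm, and thus compressible.
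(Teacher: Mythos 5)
Your proof is correct and follows essentially the same route as the paper's: approximate each compressible $x$ by a normalized $\theta n$-sparse unit vector $z$ at distance at most $2\rho$, then cover the sparse unit vectors by a union over supports of $\rho$-nets on low-dimensional spheres, and bound the cardinality by $\binom{n}{\lfloor\theta n\rfloor}$ times $(3/\rho)^{\lfloor\theta n\rfloor}$. One small glitch: your dismissal of the case $\rho\ge 1/3$ via a one-point net is off, since a single point of $\Comp$ is a $2$-net but $3\rho\ge 2$ only when $\rho\ge 2/3$; however, this is harmless because your main argument only needs $\rho<1$ (to ensure $\|y\|>0$), and the genuinely degenerate case is just $\rho=1$, where $3\rho=3>\diam(S^{n-1})$ and a single point does suffice.
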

\begin{proof}
Note that the definition of $\Comp$ implies that for any $y\in\Comp$ there is $y'\in S^{n-1}$
such that $|\supp(y')|\le \theta n$ and $\|y-y'\|\le 2\rho$.
Hence, it is enough to show that one can find a Euclidean $\rho$-net $\Net$ on the set of $\theta n$-sparse unit vectors,
with the required estimate on $|\Net|$.
This follows from a standard estimate on the cardinality of an optimal $\rho$-net on $S^{\lfloor\theta n\rfloor-1}$, together
with a bound for the binomial coefficient ${n\choose \lfloor\theta \rfloor}$.
\end{proof}

Incompressible vectors have the important property that a significant portion of their coordinates are of order $n^{-1/2}$.
In paper \cite{RV square}, this property was referred to as ``incompressible vectors are spread''.
For reader's convenience, we provide a proof of this fact below (let us note once again that analogous concepts were already considered
in \cite{LPRT}).
\begin{lemma}[{\cite[Lemma~3.4]{RV square}}]\label{incompressible are spread}
For any $\theta,\rho\in(0,1)$
and for any vector $x\in\Incomp_n(\theta,\rho)$
there is a subset of indices $\sigma(x)\subset\{1,2,\dots,n\}$
of cardinality at least $\frac{1}{2}\rho^2 \theta n$
such that for all $i \in \sigma(x)$ we have
$$
\frac{\rho}{\sqrt{2n}} \le x_i \le \frac{1}{\sqrt{\theta n}}.
$$
\end{lemma}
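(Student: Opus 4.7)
The plan is to exhibit the set $\sigma(x)$ directly by decomposing the coordinates of $x$ into three groups according to magnitude: ``large'' ($|x_i|>1/\sqrt{\theta n}$), ``small'' ($|x_i|<\rho/\sqrt{2n}$), and ``medium'' (the complement, which will be $\sigma(x)$). I will then bound the sizes of the large and small groups using, respectively, the unit-norm constraint and the incompressibility hypothesis, and conclude that the medium group must be correspondingly large.

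First I would let $\sigma_{\text{big}}:=\{i\le n:|x_i|>1/\sqrt{\theta n}\}$. Since $\|x\|=1$, Markov's inequality (or a direct comparison) gives $|\sigma_{\text{big}}|\cdot\frac{1}{\theta n}<\sum_{i\in\sigma_{\text{big}}}|x_i|^2\le 1$, hence $|\sigma_{\text{big}}|<\theta n$. Define $z\in\R^n$ by $z_i=x_i$ for $i\in\sigma_{\text{big}}$ and $z_i=0$ otherwise; then $z$ is $\theta n$-sparse. Because $x\in\Incomp_n(\theta,\rho)$, the distance from $x$ to any $\theta n$-sparse vector exceeds $\rho$, so in particular
$$
\rho^2<\|x-z\|^2=\sum_{i\notin\sigma_{\text{big}}}|x_i|^2.
$$

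Next I would split the complement of $\sigma_{\text{big}}$ into $\sigma_{\text{small}}:=\{i:|x_i|<\rho/\sqrt{2n}\}$ and $\sigma(x):=\{i:\rho/\sqrt{2n}\le|x_i|\le 1/\sqrt{\theta n}\}$. The small coordinates contribute at most
$$
\sum_{i\in\sigma_{\text{small}}}|x_i|^2<n\cdot\frac{\rho^2}{2n}=\frac{\rho^2}{2},
$$
so the previous bound forces $\sum_{i\in\sigma(x)}|x_i|^2>\rho^2/2$. Each summand is bounded above by $1/(\theta n)$, giving $|\sigma(x)|\cdot\frac{1}{\theta n}>\rho^2/2$, i.e.\ $|\sigma(x)|>\tfrac{1}{2}\rho^2\theta n$, which is the desired conclusion.

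There is essentially no obstacle here: the argument is a bookkeeping exercise balancing the $\ell_2$ budget against the incompressibility condition. The only point that requires a small amount of care is choosing the threshold $\rho/\sqrt{2n}$ for the small coordinates so that their total $\ell_2^2$ contribution is at most $\rho^2/2$, leaving room for the medium coordinates to account for the remaining $\rho^2/2$ of the excess guaranteed by incompressibility. One should also verify that the vector $z$ used above is a legitimate witness for the distance from $x$ to the sparse set (it is, since strict inequality $|\sigma_{\text{big}}|<\theta n$ gives $|\supp(z)|\le\theta n$).
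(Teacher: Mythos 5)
Your proof is correct and follows essentially the same route as the paper's: both decompose the index set into large, small, and medium coordinates, bound the large set via $\|x\|=1$, invoke incompressibility to lower-bound the $\ell_2$ mass off the large set, subtract the small-coordinate contribution, and conclude by a pointwise upper bound on the medium coordinates. The paper phrases the computation via coordinate projections $P_{\sigma_1}, P_{\sigma_2}, P_\sigma$, but the inequalities are identical.
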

\begin{proof}
For every subset $I\subset\{1,2,\dots,n\}$, let $P_I$ be the coordinate projection onto the span of $\{e_i:\,i\in I\}$.
Let $\sigma=\sigma(x) := \sigma_1 \cap \sigma_2$, where
$$ 
\sigma_1 = \Big\{i\leq n:\, |x_i| \le \frac{1}{\sqrt{\theta n}}\Big\}, \quad \quad
\sigma_2 = \Big\{i\leq n:\, |x_i| \ge \frac{\rho}{\sqrt{2 n}}\Big\}.
$$
Since $\|x\| = 1$, we have $|\sigma_1^c| \le \theta n$,
and $P_{\sigma_1^c}(x)$ is a $\theta n$-sparse vector.
Then the condition that $x$ is incompressible implies
$\|P_{\sigma_1}(x)\| = \|x - P_{\sigma_1^c}(x)\| > \rho$. Hence,
\begin{equation}\label{inc_are_spread_1}
\|P_{\sigma} (x)\|^2 \ge \|P_{\sigma_1}(x)\|^2 - \|P_{\sigma_2^c}(x)\|^2 \ge \rho^2 - n\cdot \|P_{\sigma_2^c}(x)\|^2_{\infty} \ge \rho^2/2. 
\end{equation}
On the other hand, in view of the inclusion $\sigma(x) \subset \sigma_1$, we get
\begin{equation}\label{inc_are_spread_2}
\|P_{\sigma}(x)\|^2 \le \|P_{\sigma}(x)\|_{\infty}^2 \cdot |\sigma| \le \frac{1}{\theta n}\cdot |\sigma|. 
\end{equation}
Together  \eqref{inc_are_spread_1} and \eqref{inc_are_spread_2} imply that $|\sigma| \ge \frac{1}{2}\rho^2\theta n$.
 \end{proof}

For incompressible vectors we will need the following basic estimate from \cite{RV square}.
\begin{prop}[{\cite[Lemma~3.5]{RV square}}]\label{invertibility via distance}
Let $M$ be a random $n\times n$
matrix with column vectors $X^1$, $X^2,\dots, X^n$,
and let $H_j$ ($j=1,2,\dots,n$) be the span of all column vectors except the $j$-th. Then for every $\varepsilon > 0$ we have 
$$
\P\bigl\{\inf\limits_{y \in \Incomp(\theta,\rho)} \|My\|<\varepsilon \rho n^{-1/2}\bigr\}
\le \frac{1}{\theta n} \sum_{j=1}^n\P \{\dist(X^j, H_j) < \varepsilon\}.
$$ 
\end{prop}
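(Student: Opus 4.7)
The plan is to combine an elementary geometric identity with the ``spread'' property of incompressible vectors (Lemma~\ref{incompressible are spread}), concluding by a simple counting/averaging argument.

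The starting observation is deterministic: for any vector $y\in S^{n-1}$ and any index $j$, writing $My=y_j X^j+\sum_{i\neq j}y_i X^i$ and noticing that the latter sum lies in $H_j$, we get
$$
\|My\|\;\ge\;|y_j|\,\dist(X^j,H_j).
$$
Thus if $\|My\|<\varepsilon\rho\, n^{-1/2}$, then for every coordinate $j$ with $|y_j|$ bounded from below by some multiple of $\rho\,n^{-1/2}$ we immediately obtain $\dist(X^j,H_j)<C\varepsilon$ for an appropriate constant. Lemma~\ref{incompressible are spread} supplies, for any $y\in\Incomp(\theta,\rho)$, a set $\sigma(y)\subset[n]$ of size proportional to $\theta n$ (with the precise constant as in the statement of that lemma) on which such a lower bound on $|y_j|$ holds.

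The remainder is a counting step. On the ``bad'' event $\Event:=\{\inf_{y\in\Incomp}\|My\|<\varepsilon\rho\, n^{-1/2}\}$, pick (measurably) a witness $y=y(\omega)\in\Incomp$ realising the bound up to a negligible error, and define the random index set
$$
\mathcal J:=\bigl\{j\in[n]:\,\dist(X^j,H_j)<C\varepsilon\bigr\}.
$$
By the preceding paragraph, $\sigma(y(\omega))\subset\mathcal J(\omega)$ on $\Event$, so that $|\mathcal J|\gtrsim\theta n$ on this event. Taking expectations and using $\Exp|\mathcal J|=\sum_{j=1}^n\P\{\dist(X^j,H_j)<C\varepsilon\}$ together with Markov's inequality then yields
$$
\P(\Event)\,\cdot\,\theta n\;\le\;\sum_{j=1}^n\P\{\dist(X^j,H_j)<C\varepsilon\},
$$
which is the claimed estimate (after absorbing the universal constant $C$, or equivalently adjusting the relation between the $\rho$ in the hypothesis and the $\varepsilon$ in the conclusion to match the precise statement).

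There is no genuine obstacle here: the only technical point to verify is the measurable selection of the witness $y(\omega)$, which can be bypassed by taking a countable dense subset of $\Incomp$ and replacing the infimum with a supremum of a countable family of measurable events, so that $\mathcal J$ is automatically random Borel. All constants propagate transparently from Lemma~\ref{incompressible are spread}.
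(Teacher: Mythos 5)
Your starting identity $\|My\|\ge\dist(My,H_j)=|y_j|\,\dist(X^j,H_j)$ and the overall counting strategy (show that on the bad event many indices $j$ have $\dist(X^j,H_j)$ small, then take expectations) are exactly right. However, routing the count through Lemma~\ref{incompressible are spread} loses constants in two places, and the loss is not cosmetic. That lemma only gives $|\sigma(y)|\ge\frac12\rho^2\theta n$ indices with $|y_j|\ge\rho/\sqrt{2n}$, so on the bad event you get at least $\frac12\rho^2\theta n$ indices with $\dist(X^j,H_j)<\sqrt2\,\varepsilon$ (not $\varepsilon$). The resulting bound is $\P(\Event)\le\frac{2}{\rho^2\theta n}\sum_j\P\{\dist(X^j,H_j)<\sqrt2\,\varepsilon\}$. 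The prefactor $2/\rho^2$ is not a universal constant — it blows up as $\rho\to0$ — and the threshold $\sqrt2\,\varepsilon$ cannot be rescaled away without also changing the $\varepsilon\rho n^{-1/2}$ on the left, so this does not prove the proposition as stated. (For the downstream use in Theorem~B, where $\rho=\theta$ is fixed, the weaker bound would still suffice after adjusting $L$, but it is not a proof of the stated inequality.)

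The argument in \cite{RV square} avoids the spread lemma entirely and gets clean constants by appealing directly to the definition of incompressibility. Suppose $y\in\Incomp(\theta,\rho)$ satisfies $\|My\|<\varepsilon\rho n^{-1/2}$, and set $J:=\{j:\dist(X^j,H_j)\ge\varepsilon\}$, a set depending only on $M$. For $j\in J$ the identity gives $|y_j|<\rho/\sqrt n$, hence $\|P_Jy\|^2<|J|\rho^2/n\le\rho^2$, i.e.\ $\|y-P_{J^c}y\|<\rho$. Since $P_{J^c}y$ is supported on $J^c$ and $y$ is incompressible, $P_{J^c}y$ cannot be $\theta n$-sparse, so $|J^c|>\theta n$. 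Thus on the bad event $|\{j:\dist(X^j,H_j)<\varepsilon\}|>\theta n$ deterministically, and taking expectations gives exactly $\theta n\,\P(\Event)\le\sum_j\P\{\dist(X^j,H_j)<\varepsilon\}$, with no constant loss, no $\sqrt2$, and (incidentally) no need for a measurable selection of a witness $y$.
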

In view of independence and equi-measurability of the columns of $A$ in our model, the above proposition
yields for any $\varepsilon>0$:
$$\P\bigl\{\inf\limits_{y \in \Incomp(\theta,\rho)} \|Ay\|<\varepsilon \rho n^{-1/2}\bigr\}
\le \frac{1}{\theta}\P\Bigl\{\Bigl|\sum\limits_{i=1}^n X_i^*a_{in}\Bigr| < \varepsilon\Bigr\},$$
where $X^*=(X_1^*,X_2^*,\dots,X_n^*)$ denotes a random normal unit vector to the span of the
first $n-1$ columns of $A$.
Obtaining small ball probability estimates for $\Bigl|\sum\limits_{i=1}^n X_i^*a_{in}\Bigr|$ was
a crucial ingredient of \cite{RV square}.

Given a real-valued random variable $\xi$, define its {\it Levy concentration function} is
$$\Levy(\xi,z):=\sup\limits_{\lambda\in\R}\P\{|\xi-\lambda|\le z\},\;\;z\ge 0.$$
First, let us look at some well known estimates of
$\Levy(\xi,v)$ and then state a stronger bound from \cite{RV square}.

\begin{theor}[{Rogozin, \cite{Rogozin}}]\label{t: Rogozin}
Let $n\in\N$, let $\xi_1,\xi_2,\dots,\xi_n$ be jointly independent random variables
and let $t_1,t_2,\dots,t_n$ be some positive real numbers.
Then for any $t\geq \max\limits_{j}t_j$ we have
$$\Levy\Big(\sum_{j=1}^n\xi_j,t\Big)\leq C_{\ref{t: Rogozin}}\,t\,\Big(\sum_{j=1}^n(1-\Levy(\xi_j)){t_j}^2\Big)^{-1/2},$$
where $C_{\ref{t: Rogozin}}>0$ is a universal constant.
\end{theor}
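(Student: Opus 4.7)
The plan is to attack Theorem~\ref{t: Rogozin} by a Fourier-analytic route, combining Esseen's smoothing inequality with a symmetrization estimate and independence. Write $S=\sum_{j=1}^n\xi_j$ and let $\phi_j(s):=\E e^{is\xi_j}$. Esseen's concentration inequality states that for any random variable $\eta$ and any $t>0$,
$$\Levy(\eta,t)\le C\,t\int_{-1/t}^{1/t}|\phi_\eta(s)|\,ds,$$
so, together with the factorization $\phi_S=\prod_j\phi_j$ coming from independence, the problem reduces to a pointwise upper bound on $\prod_j|\phi_j(s)|$ on the Esseen window $|s|\le 1/t$.

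To obtain such a bound, I would symmetrize: let $\xi_j'$ be an independent copy of $\xi_j$ and set $\eta_j:=\xi_j-\xi_j'$. Then $|\phi_j(s)|^2=\E\cos(s\eta_j)$, and hence
$$|\phi_j(s)|\le\exp\bigl(-\tfrac{1}{2}(1-\E\cos(s\eta_j))\bigr).$$
The heart of the argument is the pointwise lower estimate, valid for $|s|\le 1/t_j$,
$$1-\E\cos(s\eta_j)\ge c\,s^2 t_j^2\bigl(1-\Levy(\xi_j,t_j)\bigr).$$
To establish this, I would first prove that $\P\{|\eta_j|\ge t_j\}\ge c_0\bigl(1-\Levy(\xi_j,t_j)\bigr)$: taking $\lambda_0$ near-optimal for the Levy concentration of $\xi_j$ at scale $t_j$ and considering the independent events that one copy lies at least $t_j/2$ above $\lambda_0$ and the other at least $t_j/2$ below, each has probability $\gtrsim 1-\Levy(\xi_j,t_j)$, and their intersection forces $|\eta_j|\ge t_j$. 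On this event, the elementary bound $1-\cos(s\eta_j)\ge c\,s^2 t_j^2$ holds for $|s|\le 1/t_j$ by Taylor expansion of $\cos$, and integrating gives the displayed inequality.

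Putting everything together: since the hypothesis $t\ge\max_j t_j$ guarantees that every $s$ in the Esseen window $|s|\le 1/t$ also satisfies $|s|\le 1/t_j$ for each $j$, we obtain
$$\prod_{j=1}^n|\phi_j(s)|\le\exp\Bigl(-c\,s^2\sum_{j=1}^n t_j^2\bigl(1-\Levy(\xi_j,t_j)\bigr)\Bigr)=\exp(-c\,s^2\sigma^2),$$
where $\sigma^2:=\sum_{j=1}^n t_j^2\bigl(1-\Levy(\xi_j,t_j)\bigr)$. Extending the Esseen integral to all of $\R$ and evaluating the resulting Gaussian gives $\int_{-1/t}^{1/t}|\phi_S(s)|\,ds\le C/\sigma$, and substituting back into Esseen's inequality yields $\Levy(S,t)\le C\,t/\sigma$, which is precisely the claim.

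The main technical obstacle is the lower bound $1-\E\cos(s\eta_j)\gtrsim s^2 t_j^2(1-\Levy(\xi_j,t_j))$: getting the factor $t_j^2$ on the right-hand side (rather than merely a universal constant) requires exploiting the fact that $\eta_j$ is not just nonzero but of magnitude at least $t_j$ on an event of probability $\gtrsim 1-\Levy(\xi_j,t_j)$, which is what the two-sided symmetrization argument supplies. Everything else is a matter of combining standard ingredients; Esseen's inequality itself has a short independent proof via a kernel smoothing, so I would either cite it or include its one-line derivation.
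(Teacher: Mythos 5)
The paper merely cites this as Rogozin's theorem and gives no proof of its own, so your Fourier route is not in conflict with anything in the paper, and its overall architecture (Esseen's inequality, factorization by independence, symmetrization, Gaussian integral) is the standard one. However, the step you yourself identify as the crux is not correctly justified. You assert that on the event $\{|\eta_j|\ge t_j\}$ and for $|s|\le 1/t_j$ one has $1-\cos(s\eta_j)\ge c\,s^2t_j^2$ ``by Taylor expansion of $\cos$.'' This is false: a Taylor lower bound of the form $1-\cos x\ge c\,x^2$ requires $|x|$ to be bounded above (say $|x|\le\pi$), whereas on $\{|\eta_j|\ge t_j\}$ the quantity $|s\eta_j|$ is only bounded below, and $1-\cos$ vanishes at every multiple of $2\pi$. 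Concretely, if $\eta_j$ charges the point $2\pi t_j$ with positive probability and $s=1/t_j$, then $1-\cos(s\eta_j)=0$ while $cs^2t_j^2=c>0$, so the pointwise estimate $\E[1-\cos(s\eta_j)]\ge c\,s^2t_j^2(1-\Levy(\xi_j,t_j))$ on the whole Esseen window cannot hold. What \emph{is} true is the averaged version $\frac{t_j}{2}\int_{-1/t_j}^{1/t_j}\E[1-\cos(s\eta_j)]\,ds=\E\bigl[1-\tfrac{\sin(\eta_j/t_j)}{\eta_j/t_j}\bigr]\ge c\,\P\{|\eta_j|\ge t_j\}$, using $1-\tfrac{\sin x}{x}\ge c\min(x^2,1)$; but passing from an $L^1$ lower bound on $g(s):=\sum_j\E[1-\cos(s\eta_j)]$ to the required upper bound on $\int_{-1/t}^{1/t}e^{-g(s)/2}\,ds$ is genuinely extra work (it needs, e.g., the doubling inequality $g(2s)\le 4g(s)$ or a different smoothing of the Esseen kernel) that your sketch does not supply.

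There is also a smaller flaw in the preceding substep. You argue that $\P\{|\eta_j|\ge t_j\}\gtrsim 1-\Levy(\xi_j,t_j)$ by claiming that, with $\lambda_0$ a near-optimal concentration center, \emph{both} events $\{\xi_j\ge\lambda_0+t_j/2\}$ and $\{\xi_j\le\lambda_0-t_j/2\}$ have probability $\gtrsim 1-\Levy(\xi_j,t_j)$. This is false for one-sided distributions (one of the two tails can be empty). The conclusion is nonetheless true and has a one-line proof you should use instead: for every fixed $y$, $\P\{|\xi_j-y|>t_j\}\ge 1-\Levy(\xi_j,t_j)$ directly from the definition of the concentration function, so conditioning on $\xi_j'=y$ and integrating gives $\P\{|\eta_j|>t_j\}\ge 1-\Levy(\xi_j,t_j)$ with constant one.
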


Obviously, if $\xi$ is essentially non-constant, there are $v>0$ and $u\in(0,1)$ such that
$\Levy(\xi,v)\le u$. The following lemma is an elementary consequence of Theorem~\ref{t: Rogozin}
(see \cite[Lemma~3.6]{LPRT} and \cite[Lemma~2.6]{RV square} for similar statements proved
under additional moment assumptions on the variable).

\begin{lemma}\label{basic anticonc}
Let $\xi$ be a random variable with $\Levy(\xi,\widetilde v)\le\widetilde u$
for some $\widetilde v>0$ and $\widetilde u\in(0,1)$. Then there are $v'>0$ and $u'\in(0,1)$ depending only on $\widetilde u,\widetilde v$
with the following property: Let $\xi_1,\xi_2,\dots, \xi_n$ be independent copies of $\xi$.
Then for any vector $y\in S^{n-1}$ we have
$$\Levy\Bigl(\sum\limits_{j=1}^n y_j \xi_j, v'\Bigr)\le u'.$$
\end{lemma}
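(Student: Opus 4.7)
The plan is to derive the lemma as a direct consequence of Rogozin's inequality (Theorem~\ref{t: Rogozin}), applied to the rescaled summands $\eta_j := y_j\xi_j$, together with a simple dichotomy based on $M := \max_j |y_j|$. The key observation is that for each $j$ with $y_j\neq 0$, $\Levy(\eta_j,t_j) = \Levy(\xi_j,t_j/|y_j|)$, so choosing $t_j := |y_j|\widetilde v$ guarantees $\Levy(\eta_j,t_j)\le\widetilde u$ by the hypothesis on $\xi$. Consequently
$$\sum_{j=1}^n (1-\Levy(\eta_j,t_j))\,t_j^2 \;\ge\; (1-\widetilde u)\,\widetilde v^{\,2}\sum_{j=1}^n y_j^2 \;=\;(1-\widetilde u)\,\widetilde v^{\,2}.$$
I will fix
$$v' := \frac{\widetilde v\,\sqrt{1-\widetilde u}}{2\,C_{\ref{t: Rogozin}}},\qquad u' := \max(\tfrac{1}{2},\widetilde u),$$
and verify $\Levy(\sum y_j\xi_j,v')\le u'$ in two cases.

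In the first case, when $M\le v'/\widetilde v$, one has $v'\ge\widetilde v M = \max_j t_j$, so Rogozin's inequality applies with $t:=v'$, yielding
$$\Levy\Big(\sum_{j=1}^n y_j\xi_j,\,v'\Big)\;\le\; \frac{C_{\ref{t: Rogozin}}\,v'}{\widetilde v\,\sqrt{1-\widetilde u}}\;=\;\frac{1}{2}.$$
In the second case, when some $|y_{j_0}|>v'/\widetilde v$, I condition on $(\xi_j)_{j\neq j_0}$: the sum becomes $y_{j_0}\xi_{j_0}$ plus a constant, so using monotonicity of $\Levy(\cdot,\cdot)$ in its second argument and $v'/|y_{j_0}|<\widetilde v$,
$$\Levy\Big(\sum_{j=1}^n y_j\xi_j,\,v'\Big)\;\le\;\Levy(\xi_{j_0},\,v'/|y_{j_0}|)\;\le\;\Levy(\xi_{j_0},\widetilde v)\;\le\;\widetilde u.$$
Combining the two cases gives the bound $u'$, with $v'$ and $u'$ depending only on $\widetilde v$ and $\widetilde u$, as required.

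I do not anticipate serious obstacles here: Rogozin's theorem does all the real work, and the only point requiring a small amount of care is the need to separate the ``spread'' case (where the Rogozin bound is sharp) from the ``spiky'' case (where a single coordinate already provides enough anti-concentration via conditioning). The threshold $v'/\widetilde v$ is chosen precisely so that both branches produce a constant strictly less than $1$.
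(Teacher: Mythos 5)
Your proof is correct and follows essentially the same argument as the paper: both apply Rogozin's inequality with $t_j = |y_j|\widetilde v$, set $v' = \widetilde v\sqrt{1-\widetilde u}/(2C_{\ref{t: Rogozin}})$, and split on whether the largest coordinate exceeds $v'/\widetilde v$, handling the spread case via Rogozin and the spiky case via conditioning on a single coordinate. The only cosmetic difference is that you spell out the conditioning step and the rescaling identity $\Levy(y_j\xi_j,t_j)=\Levy(\xi_j,t_j/|y_j|)$ slightly more explicitly than the paper does.
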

\begin{proof}
By Theorem~\ref{t: Rogozin},
for any $y\in S^{n-1}$
and any $h\ge\max\limits_{j}|y_j|\widetilde v$, we have
$$\Levy\Bigl(\sum\limits_{j=1}^n y_j \xi_j,h\Bigr)
\le \frac{C_{\ref{t: Rogozin}} h}{\widetilde v\sqrt{1-\widetilde u}}.$$
Define $v':=\frac{\widetilde v\sqrt{1-\widetilde u}}{2C_{\ref{t: Rogozin}}}$ and consider two cases.

\noindent 1) For every $j = 1, \ldots, n$ we have $|y_j|\le \frac{\sqrt{1-\widetilde u}}{2C_{\ref{t: Rogozin}}}$.
Then $v'\ge\max\limits_{j}|y_j|\widetilde v$, and
we obtain from the above relation
$$\Levy\Bigl(\sum\limits_{j=1}^n y_j \xi_j,v'\Bigr)
\le \frac{1}{2}.$$

\noindent 2) There is $j_0$ such that $|y_{j_0}|>\frac{\sqrt{1-\widetilde u}}{2C_{\ref{t: Rogozin}}}$. Then we get
$$\Levy\Bigl(\sum\limits_{j=1}^n y_j \xi_j,v'\Bigr)
\le \Levy(y_{j_0} \xi_{j_0},v')\le \widetilde u.$$

\noindent Thus, we can take $u':=\max(1/2,\widetilde u)$.
\end{proof}

\begin{lemma}[{``Tensorization lemma'', \cite[Lemma~2.2]{RV square}}]\label{l: RV tensorization}
Let $\alpha_1,\alpha_2,\dots,\alpha_n$ be i.i.d.\ random variables, and let $\varepsilon_0>0$.
\begin{itemize}

\item Assume that
$$\Levy(\alpha_1,\varepsilon)\leq L\varepsilon\quad\quad\mbox{for some $L>0$ and for all }\varepsilon\geq \varepsilon_0.$$
Then
$$\P\Big\{\sum_{j=1}^n {\alpha_j}^2\leq \varepsilon^2 n\Big\}\leq (CL\varepsilon)^n\quad\quad\mbox{for all }\varepsilon\geq \varepsilon_0,$$
where $C>0$ is a universal constant.

\item Assume that $\Levy(\alpha_1,v')\leq u'$ for some $v'>0$ and $u'\in(0,1)$.
Then there are $v>0$ and $u\in(0,1)$ depending only on $u',v'$ such that
$$\P\Big\{\sum_{j=1}^n {\alpha_j}^2\leq v n\Big\}\leq u^n.$$
\end{itemize}
\end{lemma}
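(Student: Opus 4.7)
For both parts the strategy is the same: exponential Markov (Laplace transform) inequality. By independence, for any $\lambda>0$,
$$\P\Big\{\sum_{j=1}^n {\alpha_j}^2 \leq \tau^2 n\Big\}
\leq e^{\lambda \tau^2 n}\,\Exp\exp\Big(-\lambda\sum_{j=1}^n{\alpha_j}^2\Big)
= \big(e^{\lambda \tau^2}\,\Exp e^{-\lambda {\alpha_1}^2}\big)^n,$$
so everything reduces to a one-dimensional estimate for $\Exp e^{-\lambda{\alpha_1}^2}$ in terms of the Levy concentration assumption on $\alpha_1$.

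For the first assertion I would take $\lambda=1/\varepsilon^2$. By the layer-cake formula together with the substitution $s=e^{-h^2}$,
$$\Exp e^{-{\alpha_1}^2/\varepsilon^2}=\int_0^\infty \P\{|\alpha_1|\leq \varepsilon h\}\,2h\,e^{-h^2}\,dh.$$
Split the integral at $h_0:=\varepsilon_0/\varepsilon\leq 1$. For $h\leq h_0$ bound the integrand crudely by $\P\{|\alpha_1|\leq\varepsilon_0\}\leq L\varepsilon_0$ (the hypothesis applied at the threshold $\varepsilon_0$); for $h> h_0$ apply the hypothesis directly to get $\P\{|\alpha_1|\leq \varepsilon h\}\leq L\varepsilon h$. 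A short gaussian moment computation shows the first piece contributes at most $L\varepsilon_0(1-e^{-h_0^2})\leq L\varepsilon_0^{3}/\varepsilon^{2}\leq L\varepsilon$ (using $\varepsilon\geq\varepsilon_0$) and the second at most $2L\varepsilon\int_0^\infty h^2e^{-h^2}\,dh$, yielding $\Exp e^{-{\alpha_1}^2/\varepsilon^2}\leq C_0 L\varepsilon$. Substituting back into the Markov bound gives $(eC_0L\varepsilon)^n$, which is the desired estimate.

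For the second assertion take $\lambda=1$ and split the expectation on $\{|\alpha_1|\leq v'\}$ and its complement:
$$\Exp e^{-{\alpha_1}^2}\leq \P\{|\alpha_1|\leq v'\}\cdot 1+\P\{|\alpha_1|>v'\}\cdot e^{-{v'}^2}\leq u'+(1-u')e^{-{v'}^2}=:\widetilde u,$$
and $\widetilde u<1$ strictly, since $u'<1$ and $v'>0$. Now choose $v=v(u',v')>0$ small enough that $u:=e^v \widetilde u<1$; then
$$\P\Big\{\sum_{j=1}^n{\alpha_j}^2\leq v n\Big\}\leq (e^v\widetilde u)^n=u^n.$$

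The argument is pure Laplace transform bookkeeping; there is no real obstacle. The only decision that has to be made is the location of the cutoff $h_0=\varepsilon_0/\varepsilon$ in the first part, and it is forced by the range on which the hypothesis provides the linear bound on $\Levy(\alpha_1,\cdot)$.
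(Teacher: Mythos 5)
Your argument is correct, and it is essentially the standard proof of this lemma. Note that the paper does not prove the statement itself but cites it as Lemma~2.2 of the Rudelson--Vershynin paper on the Littlewood--Offord problem; the exponential Markov (Laplace transform) argument you give, with the product structure coming from independence, the change of variables $t=h^2$ in the identity $\Exp e^{-Y}=\int_0^\infty e^{-t}\P\{Y\le t\}\,dt$, and the cutoff at $h_0=\varepsilon_0/\varepsilon$, is precisely the route taken there, so there is nothing further to compare.
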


As a consequence of Lemmas~\ref{basic anticonc} and~\ref{l: RV tensorization},
we get

\begin{lemma}\label{tensorization lemma}
Let $\alpha$ be a random variable with $\Levy(\alpha,\widetilde v)\le\widetilde u$
for some $\widetilde v>0$ and $\widetilde u\in(0,1)$. Then there are $v>0$ and $u\in(0,1)$ depending only on $\widetilde u,\widetilde v$
with the following property:
Let $A$ be an $n\times n$ random matrix with i.i.d.\ entries equidistributed with $\alpha$.
Then for any $y\in S^{n-1}$ we have
$$\P\big\{\|Ay\|\le v\sqrt{n}\big\}\le u^n.$$
\end{lemma}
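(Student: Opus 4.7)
The plan is to reduce the statement to the ``scalar'' anticoncentration bound already established in Lemma~\ref{basic anticonc} combined with the tensorization Lemma~\ref{l: RV tensorization}.

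First I would fix $y\in S^{n-1}$ and look at the coordinates of $Ay$. Writing $A^{(1)},A^{(2)},\dots,A^{(n)}$ for the rows of $A$, we have
$$\|Ay\|^2=\sum_{i=1}^n\bigl\langle A^{(i)},y\bigr\rangle^2=\sum_{i=1}^n\beta_i^2,\quad\text{where }\beta_i:=\sum_{j=1}^n a_{ij}\,y_j.$$
Since the entries of $A$ are jointly independent and each row consists of i.i.d.\ copies of $\alpha$, the variables $\beta_1,\beta_2,\dots,\beta_n$ are i.i.d., with each $\beta_i$ distributed as $\sum_{j=1}^n y_j\xi_j$ for independent copies $\xi_1,\dots,\xi_n$ of $\alpha$.

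Next, applying Lemma~\ref{basic anticonc} to the weights $y_1,\dots,y_n$ and the variable $\alpha$, there are $v'>0$ and $u'\in(0,1)$ depending only on $\widetilde v,\widetilde u$ such that $\Levy(\beta_1,v')\le u'$. At this point the second bullet of Lemma~\ref{l: RV tensorization} applies to the i.i.d.\ sequence $\beta_1,\dots,\beta_n$ and yields constants $v>0$ and $u\in(0,1)$ (depending only on $v',u'$, hence only on $\widetilde v,\widetilde u$) such that
$$\P\Bigl\{\sum_{i=1}^n\beta_i^2\le v\,n\Bigr\}\le u^n.$$
Combining this with the identity $\|Ay\|^2=\sum_i\beta_i^2$ gives the desired bound $\P\bigl\{\|Ay\|\le\sqrt{v}\,\sqrt{n}\bigr\}\le u^n$ (after relabeling $\sqrt{v}$ as $v$).

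There is no real obstacle here: the lemma is essentially a clean assembly of two ingredients already available in the preliminaries. The only point worth checking carefully is that the constants $v',u'$ produced by Lemma~\ref{basic anticonc} do not depend on the particular choice of $y\in S^{n-1}$ (which is exactly the content of that lemma), so that the bound is uniform in $y$ as required.
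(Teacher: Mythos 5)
Your proof is correct and follows exactly the route the paper takes: the paper states Lemma~\ref{tensorization lemma} as an immediate consequence of Lemmas~\ref{basic anticonc} and~\ref{l: RV tensorization}, and your argument is precisely the filling-in of that deduction. The key observation that $\|Ay\|^2 = \sum_i \beta_i^2$ with i.i.d.\ rows feeding the second bullet of the tensorization lemma is exactly what is intended.
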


\begin{rem}
Lemma~\ref{tensorization lemma} can be compared with \cite[Proposition~3.4]{LPRT} and \cite[Corollary~2.7]{RV square};
however, those statements were proved with additional assumptions on the entries of $A$.
\end{rem}

\bigskip

To get a stronger estimate than the one obtained in Lemma~\ref{basic anticonc},
the following notion was developed in \cite{RV square} and \cite{RV rectangular}
(see also preceding work \cite{TV inverse LO} by Tao and Vu).
\begin{definition}[Essential least common denominator]
For parameters $r \in (0,1)$ and $h > 0$ and any non-zero vector $x\in\R^n$, define
$$\LCD_{h,r}(x):= \inf\bigl\{t > 0: \dist(t x, \Z^n) < \min(r \|t x\|, h)\bigr\}.$$
\end{definition}
We note that later we shall choose $r$ sufficiently small and $h$ to be a small multiple of $\sqrt{n}$.
Thus, most of the coordinates of $\LCD_{h,r}(x)\cdot x$ are within a small distance to integers.
For a detailed discussion of the above notion, we refer to \cite{Rudelson}.

The next statement is proved in \cite{RV rectangular}.
\begin{theor}[{\cite[Theorem~3.4]{RV rectangular}}]\label{small ball probability}
Let $\xi_1,\xi_2,\dots,\xi_n$ be independent copies of a centered random variable such that
$\Levy(\xi_i,v)\le u$ for some $v>0$ and $u\in(0,1)$. Further,
let $x = (x_1, x_2,\dots, x_n) \in S^{n-1}$ be a fixed vector.
Then for every $h>0$, $r\in(0,1)$ and for every
$$\varepsilon \ge \frac{1}{\LCD_{h,r}(x)},$$
we have
 $$
\Levy\Bigl(\sum\limits_{i=1}^n x_i \xi_{i},\varepsilon v\Bigr)
\le\frac{C_{\text{\tiny\ref{small ball probability}}}\varepsilon}{r\sqrt{1-u}} +C_{\text{\tiny\ref{small ball probability}}} \exp\bigl(-2(1-u) h^2\bigr),
 $$
where $C_{\text{\tiny\ref{small ball probability}}}$ is a universal constant.
\end{theor}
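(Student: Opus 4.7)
The plan is to proceed via an Esseen-type concentration inequality, which bounds the Lévy concentration function of a real random variable $S$ by an integral of the modulus of its characteristic function. Writing $\phi$ for the characteristic function of $\xi_1$, a change of variables gives
$$\Levy(S,\varepsilon v)\le C\varepsilon v\int_{-1/(\varepsilon v)}^{1/(\varepsilon v)}\prod_{i=1}^n|\phi(x_it)|\,dt,$$
so the task reduces to bounding the product of characteristic functions over $[-1/(\varepsilon v),1/(\varepsilon v)]$.

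For a single factor, I would symmetrise: letting $\bar\xi=\xi-\xi'$ with $\xi'$ an independent copy of $\xi_1$, one has $|\phi(s)|^2=\Exp\cos(s\bar\xi)=1-2\,\Exp\sin^2(s\bar\xi/2)$ and hence $|\phi(s)|^2\le\exp(-2\,\Exp\sin^2(s\bar\xi/2))$ by the elementary inequality $1-y\le e^{-y}$. Using $\sin^2(\pi y)\ge 4\,d(y,\Z)^2$ (where $d(\cdot,\Z)$ is the distance to the nearest integer) and taking the product over $i$, one obtains
$$\prod_i|\phi(x_it)|^2\le\exp\Bigl(-8\,\Exp\,\dist\bigl(\tfrac{t\bar\xi}{2\pi}\,x,\Z^n\bigr)^2\Bigr).$$

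The small-ball hypothesis $\Levy(\xi,v)\le u$ implies $\P\{|\bar\xi|\ge v\}\ge 1-u$. Conditioning on the event $\{|\bar\xi|\ge v\}$ and setting $\tau:=|t\bar\xi|/(2\pi)$, so that $\tau\ge|t|v/(2\pi)$, the defining property of the LCD forces $\dist(\tau x,\Z^n)\ge\min(r\tau,h)$ as soon as $\tau\ge\LCD_{h,r}(x)$ (using $\|x\|=1$). This produces a pointwise bound of the form
$$\prod_i|\phi(x_it)|\le\exp\bigl(-c(1-u)\min(r^2t^2v^2,h^2)\bigr).$$

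The last step is to integrate. Over the regime where the minimum is realised by $r^2t^2v^2$, the integrand is a Gaussian in $t$ whose integral contributes $\lesssim\varepsilon/(r\sqrt{1-u})$; over the tail regime where the minimum saturates at $h^2$ the uniform bound $\exp(-2(1-u)h^2)$ multiplied by the length of the integration interval contributes the second term on the right-hand side. The hypothesis $\varepsilon\ge 1/\LCD_{h,r}(x)$ enters precisely to synchronise the integration range with the domain on which the LCD lower bound applies for a typical realisation of $\bar\xi$, keeping the ``bad'' portion of $t$ (where only $|\phi|\le 1$ is available) short enough not to affect the final order. The main technical obstacle is the careful bookkeeping of the symmetrisation step (passing from $\Levy(\xi,v)\le u$ to the anti-concentration for $\bar\xi$) and of the $(1-u)$ factor through each stage, together with the delicate splitting of the integral into the Gaussian and $h$-saturation regimes that yields exactly the two terms in the statement.
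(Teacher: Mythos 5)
The paper does not prove this statement; it imports it verbatim from \cite[Theorem~3.4]{RV rectangular}, so there is no in-paper argument to compare against. Your high-level plan (Esseen's inequality, symmetrization $\bar\xi=\xi-\xi'$, $|\phi(s)|^2=\Exp\cos(s\bar\xi)$, $1-\cos\theta=2\sin^2(\theta/2)$, $|\sin\pi z|\ge 2\,\dist(z,\Z)$, and then the LCD to control $\dist(\tfrac{t\bar\xi}{2\pi}x,\Z^n)$) is indeed the Rudelson--Vershynin strategy, and the derivation of $\prod_i|\phi(x_it)|^2\le\exp\bigl(-8\,\Exp\dist(\tfrac{t\bar\xi}{2\pi}x,\Z^n)^2\bigr)$ is correct.

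The gap is in the step where the LCD is invoked, and it is not cosmetic. First, the inequality is stated backwards: the definition of $\LCD_{h,r}(x)$ gives $\dist(\tau x,\Z^n)\ge\min(r\tau,h)$ for $\tau<\LCD_{h,r}(x)$, not ``as soon as $\tau\ge\LCD_{h,r}(x)$''; at and beyond the LCD the distance can be arbitrarily small. Second, and more seriously, even with the direction fixed, the random variable $\tau=|t\bar\xi|/(2\pi)$ is bounded below on $\{|\bar\xi|\ge v\}$ but has no upper bound: the hypothesis $\Levy(\xi,v)\le u$ gives $\P\{|\bar\xi|\ge v\}\ge 1-u$ and nothing more, so $\tau$ may exceed $\LCD_{h,r}(x)$ with substantial probability, and there $\dist(\tau x,\Z^n)$ can vanish. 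Consequently your claimed pointwise bound $\prod_i|\phi(x_it)|\le\exp\bigl(-c(1-u)\min(r^2t^2v^2,h^2)\bigr)$ is simply false. A concrete counterexample: $n=1$, $x=(1)$, $\xi$ uniform on $\{-50,50\}$, $v=1$, $u=1/2$; then $\LCD_{h,r}(x)\in(\tfrac12,1)$, the point $t=\pi/50$ lies in the Esseen integration range, $\bar\xi/100\in\{-1,0,1\}$ so $\Exp\dist(\tfrac{t\bar\xi}{2\pi},\Z)^2=0$ and $|\phi(t)|=|\cos\pi|=1$, yet your bound would force $|\phi(t)|<1$. The remark that the hypothesis $\varepsilon\ge 1/\LCD_{h,r}(x)$ ``synchronises the integration range for a typical realisation of $\bar\xi$'' gestures at the issue without resolving it: controlling the contribution of large $|\bar\xi|$ (equivalently of $\tau\ge\LCD_{h,r}(x)$) is the real substance of the RV argument, and it is exactly the piece your sketch omits. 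In short, the claimed main difficulty (``bookkeeping of symmetrisation and splitting the integral'') is not the actual obstacle; the unboundedness of $\bar\xi$ is, and it needs a dedicated argument rather than a pointwise Gaussian-type majorant.
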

 
Thus, in order to get a satisfactory small ball probability estimate for the infimum over incompressible vectors,
it is sufficient to show that the random normal $X^*$ has exponentially large $\LCD$ with probability close to one.
This will be done in the second part of Section~\ref{ssv section}. As for the set $\Comp$, our treatment
of the random normal will be based on results of Section~\ref{parallel section}.

\section{The smallest singular value --- proof of Theorem~B}\label{ssv section}

In this section we give a proof of Theorem~B stated in the introduction.
Let us start with a version of Theorem~A more convenient for us:
\begin{theoA'}
Let $\delta\in(0,1/4]$, $n\ge \frac{1}{4\delta}$, $\varepsilon\in(0,1/2]$, $S\subset S^{n-1}$, and let
$\Net\subset S$ be a Euclidean $\varepsilon$-net on $S$. Then there exists a (deterministic) subset $\widetilde\Net\subset S$
with $|\widetilde\Net|~\le~\exp\bigl(13\delta n\ln\frac{2e}{\delta}\bigr)|\Net|$ such that for any
$n\times n$ random matrix $A$ satisfying \eqref{probab model},
with probability at least $1-4\exp(-\delta n/8)$ the set $\widetilde\Net$
is a $(\frac{\varepsilon C_{\star}}{\delta}\sqrt{n})$--net on $S$
with respect to the pseudometric $d(x,y):=\|A(x-y)\|$ ($x,y\in S^{n-1}$), where $C_{\star}>0$ is a universal constant.
\end{theoA'}
\begin{proof}
Fix parameters $n$ and $\delta$, and
let $\mathcal C$ be the collection of parallelepipeds from Theorem\,A covering $B_2^n$.
Define a set
$\widetilde{\mathcal C}:=\big\{\varepsilon P+y:\,P\in\mathcal C,\;y\in\Net,\;S\cap(\varepsilon P+y)\neq \emptyset\big\}$
and
for every $\widetilde P\in\mathcal C$ let $y_{\widetilde P}$ be a point in the intersection $S\cap\widetilde P$.
Finally, set $\widetilde\Net:=\{y_{\widetilde P}:\,\widetilde P\in\widetilde{\mathcal C}\}$.
Informally speaking, $\widetilde{\mathcal C}$ is a ``product'' of the rescaled collection $\varepsilon\cdot\mathcal C$
and the net $\Net$. For each parallelepiped in $\widetilde {\mathcal C}$ having a non-empty intersection with $S$,
we take one (arbitrary) point from this intersection to construct the refined net $\widetilde\Net$.
What remains is to check that with high probability $\widetilde\Net$ is indeed a $(\frac{\varepsilon C}{\delta}\sqrt{n})$--net on $S$
with respect to the pseudometric $d(x,y):=\|A(x-y)\|$.

Observe that
$$|\widetilde\Net|=|\widetilde{\mathcal C}|\leq |\mathcal C|\cdot |\Net|\leq \exp\bigl(13\delta n\ln\frac{2e}{\delta}\bigr)|\Net|.$$
Next, let $A$ be an $n\times n$ random matrix satisfying \eqref{probab model}, and define event $\Event$ as
$$\Event:=\Big\{\forall \;x\in B_2^n\;\;\;\;\exists P\in\mathcal C\mbox{ such that }
x\in P\mbox{ and }A(P)\subset Ax+\frac{C\sqrt{n}}{\delta}B_2^n\Big\}.$$
By Theorem\,A, we have $\P(\Event)\geq 1-4\exp(-\delta n/8)$.

Fix any point $x\in S$. By the definition of $\Net$, there is a vector $y\in\Net$ such that $\varepsilon^{-1}(x-y)\in B_2^n$.
Hence, for any point $\omega\in\Event$ on the probability space, there is a parallelepiped $P=P(\omega)\in\mathcal C$
such that $\varepsilon^{-1}(x-y)\in P$ and
$$A_\omega(P)\subset A_\omega\big(\varepsilon^{-1}(x-y)\big)+\frac{C\sqrt{n}}{\delta}B_2^n.$$
Note that $S\cap(\varepsilon P+y)\supset \{x\}\neq\emptyset$, whence $\widetilde P:=\varepsilon P+y\in\widetilde{\mathcal C}$,
and, from the above relation,
$$A_\omega(\widetilde P)\subset A_\omega x+\frac{\varepsilon C\sqrt{n}}{\delta}B_2^n,$$
whence
$$A_\omega y_{\widetilde P}-A_\omega x\subset\frac{\varepsilon C\sqrt{n}}{\delta}B_2^n,$$
where $y_{\widetilde P}\in\widetilde \Net$.
We have shown that
$$\Event\subset\Big\{\forall x\in S\;\exists y=y(x)\in\widetilde \Net\mbox{ such that }\|A(x-y)\|\leq \frac{\varepsilon C\sqrt{n}}{\delta}\Big\},$$
and the result follows.
\end{proof}
\begin{rem}
Let us note that a weaker version of Theorem~$A^\star$, with condition $\widetilde\Net\subset S$ dropped,
can be proved by applying Corollary~A instead of Theorem~A.
\end{rem}

At this point, a significant part of our argument follows the same scheme as in \cite{RV square}.
In the first part of this section, we are dealing with compressible vectors.

\begin{prop}[Compressible vectors]\label{compressible prop}
Let $\alpha$ be a centered random variable with unit variance such that
$\Levy(\alpha,\widetilde v)\le\widetilde u$ for some $\widetilde v>0$ and $\widetilde u\in(0,1)$.
Then there are numbers $\theta_{\text{\tiny\ref{compressible prop}}},v_{\text{\tiny\ref{compressible prop}}}>0$
and $u_{\text{\tiny\ref{compressible prop}}}\in(0,1)$ depending only on $\widetilde v,\widetilde u$
with the following property:
Let $n\in\N$ and let $A$ be an $n\times n$ random matrix with i.i.d.\ entries equidistributed with $\alpha$.
Then for $\Comp=\Comp_n(\theta_{\text{\tiny\ref{compressible prop}}},\theta_{\text{\tiny\ref{compressible prop}}})$ we have
$$\P\bigl\{\inf\limits_{y\in\Comp}\|Ay\|
<v_{\text{\tiny\ref{compressible prop}}}\sqrt{n}\bigr\}
\le 5\,{u_{\text{\tiny\ref{compressible prop}}}}^n.$$
\end{prop}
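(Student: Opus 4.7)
I would follow the classical $\varepsilon$-net scheme, but -- crucially -- replace the Euclidean net on $\Comp$ by its refinement produced by Theorem~$A^\star$, measured in the pseudometric $d(x,y)=\|A(x-y)\|$. This completely sidesteps the use of any upper bound on $\|A\|_{2\to 2}$, which we do not have at our disposal under only a second-moment hypothesis on the entries, and instead trades it for the mild entropy cost $\exp\bigl(13\delta n\ln\tfrac{2e}{\delta}\bigr)$ of the refinement.

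\textbf{Plan.} First, invoke Lemma~\ref{tensorization lemma} to obtain $v_0>0$, $u_0\in(0,1)$, depending only on $\widetilde v,\widetilde u$, such that $\P\{\|Ay\|\le v_0\sqrt{n}\}\le {u_0}^n$ for every fixed $y\in S^{n-1}$. Next, fix parameters $\theta,\delta\in(0,1/4]$ to be chosen, set $\Comp:=\Comp_n(\theta,\theta)$, and apply Lemma~\ref{estimating comp} to construct a Euclidean $3\theta$-net $\Net\subset\Comp$ with $|\Net|\le(5e/\theta^2)^{\theta n}$. Feed $\Net$ into Theorem~$A^\star$ with $S=\Comp$ and $\varepsilon=3\theta$: this produces a deterministic refinement $\widetilde\Net\subset\Comp\subset S^{n-1}$ with
$$|\widetilde\Net|\le \exp\bigl(13\delta n\ln\tfrac{2e}{\delta}\bigr)\,(5e/\theta^2)^{\theta n}$$
that, on an event of probability at least $1-4\exp(-\delta n/8)$, is a $\bigl(\tfrac{3\theta C_{\star}}{\delta}\sqrt{n}\bigr)$-net on $\Comp$ in the pseudometric $d$. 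Since $\widetilde\Net$ is deterministic, a union bound gives
$$\P\bigl\{\exists\, y\in\widetilde\Net:\ \|Ay\|\le v_0\sqrt{n}\bigr\}\le |\widetilde\Net|\,{u_0}^n.$$
On the intersection of the Theorem~$A^\star$ event with $\{\|Ay\|>v_0\sqrt{n}\ \text{for all }y\in\widetilde\Net\}$, every $x\in\Comp$ admits $y\in\widetilde\Net$ with $\|A(x-y)\|\le\tfrac{3\theta C_{\star}}{\delta}\sqrt{n}$, hence $\|Ax\|\ge\bigl(v_0-\tfrac{3\theta C_{\star}}{\delta}\bigr)\sqrt{n}$. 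Therefore
$$\P\Bigl\{\inf_{y\in\Comp}\|Ay\|\le \bigl(v_0-\tfrac{3\theta C_{\star}}{\delta}\bigr)\sqrt{n}\Bigr\}\le 4\exp(-\delta n/8)+|\widetilde\Net|\,{u_0}^n.$$

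\textbf{Parameter tuning and main obstacle.} The delicate point, and really the only nontrivial step, is to balance $\theta$ and $\delta$ so that simultaneously (i) the ratio $\theta/\delta$ is small enough that $\tfrac{3\theta C_{\star}}{\delta}\le v_0/2$, and (ii) the exponent $13\delta\ln\tfrac{2e}{\delta}+2\theta\ln(5e/\theta^2)+\ln u_0$ appearing in $|\widetilde\Net|{u_0}^n$ is strictly negative. These constraints pull in opposite directions: making $\delta$ small kills the entropy cost of the refinement but threatens to make the pseudometric error $\tfrac{3\theta C_{\star}}{\delta}\sqrt{n}$ blow up unless $\theta$ is also taken proportionally small. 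I would first pick $\delta=\delta(\widetilde v,\widetilde u)\in(0,1/4]$ so small that $13\delta\ln\tfrac{2e}{\delta}\le\tfrac{1}{4}|\ln u_0|$, then set $\theta:=\delta v_0/(12 C_{\star})$, and if necessary shrink $\delta$ further to also enforce $2\theta\ln(5e/\theta^2)\le\tfrac{1}{4}|\ln u_0|$. With these choices, $|\widetilde\Net|\,{u_0}^n\le {u_1}^n$ for some $u_1\in(0,1)$ depending only on $\widetilde v,\widetilde u$. Taking $\theta_{\text{\tiny\ref{compressible prop}}}:=\theta$, $v_{\text{\tiny\ref{compressible prop}}}:=v_0/2$, and $u_{\text{\tiny\ref{compressible prop}}}:=\max(e^{-\delta/8},u_1)\in(0,1)$ bounds the total failure probability by $4\exp(-\delta n/8)+{u_1}^n\le 5\,{u_{\text{\tiny\ref{compressible prop}}}}^n$. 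The finitely many $n$ that do not satisfy the hypothesis $n\ge 1/(4\delta)$ of Theorem~$A^\star$ are absorbed by a final enlargement of $u_{\text{\tiny\ref{compressible prop}}}$ so that the bound $5\,{u_{\text{\tiny\ref{compressible prop}}}}^n\ge 1$ holds trivially there.
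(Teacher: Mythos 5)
Your proposal is correct and follows essentially the same route as the paper: apply Lemma~\ref{tensorization lemma} for the single-vector estimate, take the Euclidean net on $\Comp_n(\theta,\theta)$ from Lemma~\ref{estimating comp}, refine it via Theorem~$A^\star$ in the pseudometric $d(x,y)=\|A(x-y)\|$, and then union-bound over the refined net while tuning $\theta$ and $\delta$ jointly so that the entropy cost is beaten by ${u_0}^n$ and the pseudometric error stays below $v_0\sqrt{n}/2$. The only cosmetic difference is that the paper fixes $\delta$ and $\theta$ as the largest admissible values satisfying explicit inequalities (splitting the budget into thirds rather than quarters), whereas you reach the same compatible constraints by shrinking $\delta$; both arrive at the same conclusion.
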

\begin{proof}
Without loss of generality, we can assume that $n$ is large.
First, note that by Lemma~\ref{tensorization lemma} we have a strong probability
estimate for any fixed unit vector: there are $v>0$ and $u\in(0,1)$
depending on $\widetilde v,\widetilde u$ such that for any $y\in S^{n-1}$ we get
\begin{equation}\label{eq: aux 999}
\P\{\|Ay\|<v\sqrt{n}\}\le u^n.
\end{equation}
In order to obtain a uniform estimate over a set $S = \Comp_n(\theta,\theta)$ for some small parameter $\theta$,
we will take a net $\Net \subset S$ constructed in Lemma~\ref{estimating comp}
and refine it with the help of Theorem~$A^\star$ to get a net $\widetilde\Net$ with respect to pseudometric $\|A(x-y)\|$.
We will apply Theorem~$A^\star$ with parameter $\delta$ defined as the largest number in $(0,1/4]$
so that $ \exp\bigl(13\delta n\ln\frac{2e}{\delta}\bigr) \le u^{-n/3}$.
Let us describe the procedure in more detail.

First, define parameter $\theta\in(0,1/6]$ as the largest number satisfying the inequalities
$$\left(\frac{5 e}{\theta^2}\right)^{\theta n} \le u^{-n/3}\;\;\mbox{and}\;\;\frac{3 \theta C_{\star}}{\delta} \le \frac{v}{2}.$$
Let $S$ be as above. By Lemma~\ref{estimating comp}, there is a $3 \theta$-net $\Net\subset S$
on $S$ (with respect to the usual Euclidean metric) of cardinality $|\Net|\leq (\frac{5e}{\theta^2})^{\theta n}$.
Now, by Theorem~$A^\star$,
there is a {\it deterministic} subset $\widetilde\Net \subset S$ having the following properties:
\begin{itemize}

\item
$|\widetilde\Net| \le  \exp\bigl(13\delta n\ln\frac{2e}{\delta}\bigr)\cdot|\Net|
\le u^{-n/3}\cdot \big(\frac{5 e}{\theta^2}\big)^{\theta n} \le u^{-2n/3};$

\item With probability at least $1 - 4\exp(-\delta n/8)$ for every $y \in S$ there exists $x(y) \in \widetilde\Net$ such that
$$ \|A(x - y)\| \le \frac{3\theta\cdot C_{\star}}{\delta} \sqrt{n} \le \frac{v}{2}\sqrt{n}.$$
\end{itemize}

Applying the union bound over $\widetilde\Net$ to relation \eqref{eq: aux 999}, we get
$$\P\{\|Ay'\| < v\sqrt{n}\mbox{ for some }y'\in\widetilde\Net\}\le |\widetilde\Net|u^n\leq u^{n/3}.$$
On the other hand,
the second property of $\widetilde\Net$ implies that
$$\P\Big\{\inf\limits_{y\in S}\|Ay\| < \inf\limits_{y\in \widetilde\Net}\|Ay\|-\frac{v\sqrt{n}}{2}\Big\}
\leq 4\exp(-\delta n/8).$$
Combining the two estimates, we get
$$\P\{\|Ay\|< v\sqrt{n}/2\mbox{ for some }y\in S\}\le u^{n/3}+4\exp(-\delta n/8),$$
and the result follows with $u_{\text{\tiny\ref{compressible prop}}} := \max\{u^{1/3}, \exp(-\delta/8)\}$.
\end{proof}
\begin{rem}\label{n to n minus 1 rem}
It is not difficult to see that Proposition~\ref{compressible prop} can be stated and proved in the same way for $A$ which is not square, but instead is an $n-1\times n$ matrix with i.i.d.\ entries equidistributed with $\alpha$. Indeed, for $n$ large enough we can assume that $\gamma \cdot n < (n-1) < n$ for $\gamma$ as close to one as we want (the values of $\theta_{\text{\tiny\ref{compressible prop}}}$, $u_{\text{\tiny\ref{compressible prop}}}$
and $v_\text{\tiny{\ref{compressible prop}}}$ may differ in that case). This will
be important for us later.
\end{rem}
\begin{rem}
Proposition~\ref{compressible prop} could be proved by a completely different argument
based on \cite[Proposition~13]{T limit} and not using results of Section~\ref{parallel section} at all.
However, we prefer to have a ``uniform'' treatment of both compressible and incompressible vectors.
\end{rem}

\bigskip

Let us turn to estimating the infimum over
incompressible vectors. As we already discussed in Section~\ref{sec prelim},
it suffices to show that the random unit normal vector to the span of the first $n-1$ columns of
$A$ has exponentially large $\LCD$ with probability very close to one.
This property is verified in Theorem~\ref{random normal} below.
We start with some auxiliary statements.
First, note that Theorem~\ref{small ball probability}
together with Lemma~\ref{l: RV tensorization}
imply that anti-concentration probability for a single vector can be estimated in terms of
the LCD of the vector. Namely, the bigger $\LCD(x)$ is, the less is the probability that the image $Ax$ concentrates in a small ball:
\begin{lemma}[{Small ball probability for a single vector; see \cite[Lemma~5.5]{RV square}}]\label{small ball single vector}
Let $h>0$, $r\in(0,1)$ and let $\alpha$ be a random variable satisfying
$\Levy(\alpha,\widetilde v)\le\widetilde u$ for some $\widetilde v>0$ and $\widetilde u\in(0,1)$. Then there is
$L_{\text{\tiny\ref{small ball single vector}}}\geq 1$ depending only on $\widetilde v,\widetilde u$
with the following property: Let $A'$ be an $n-1\times n$ random matrix with i.i.d.\ elements equidistributed with $\alpha$.
Then for any vector $x\in S^{n-1}$ and any 
$$\varepsilon\ge \widetilde v \cdot \max\Bigl(\frac{1}{\LCD_{h,r}(x)},
\exp(-2(1-\widetilde u) h^2)\Bigr)
$$ 
we have
$$\P\{\|A'x\|< \varepsilon\sqrt{n}\}\le (L_{\text{\tiny\ref{small ball single vector}}}\varepsilon/r)^{n-1}.$$
\end{lemma}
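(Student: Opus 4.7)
The plan is to combine Theorem~\ref{small ball probability}, which supplies a single-coordinate small-ball estimate, with the first bullet of Lemma~\ref{l: RV tensorization} applied to the $n-1$ independent coordinates of $A'x$. First I would write $A'x=(Y_1,Y_2,\dots,Y_{n-1})$, where $Y_j:=\sum_{i=1}^n x_i a'_{ji}$ are jointly independent and each distributed as $Y:=\sum_{i=1}^n x_i\xi_i$ with $\xi_i$ i.i.d.\ copies of $\alpha$. Since the Levy concentration function is translation invariant, I may replace $\alpha$ by a centered shift without affecting the assumption $\Levy(\alpha,\widetilde v)\le \widetilde u$, placing us in the setting required by Theorem~\ref{small ball probability}.

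Next, for every $\varepsilon'\ge 1/\LCD_{h,r}(x)$ Theorem~\ref{small ball probability} yields $\Levy(Y,\varepsilon'\widetilde v)\le \frac{C_{\text{\tiny\ref{small ball probability}}}\,\varepsilon'}{r\sqrt{1-\widetilde u}}+C_{\text{\tiny\ref{small ball probability}}}\exp(-2(1-\widetilde u)h^2)$. If I additionally require $\varepsilon'\ge \exp(-2(1-\widetilde u)h^2)$, the exponential term is absorbed by $\varepsilon'$, and the bound becomes $\Levy(Y,\varepsilon'\widetilde v)\le L'\varepsilon'/r$ for some $L'=L'(\widetilde u)$. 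Rescaling to $t=\widetilde v\,\varepsilon'$ gives a pointwise Levy bound $\Levy(Y,t)\le L''\,t/r$ valid for every $t\ge t_0:=\widetilde v\max\bigl(1/\LCD_{h,r}(x),\,\exp(-2(1-\widetilde u)h^2)\bigr)$, where $L''=L''(\widetilde v,\widetilde u)$.

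With this single-variable bound in hand, I would apply the first bullet of Lemma~\ref{l: RV tensorization} to the i.i.d.\ sequence $Y_1,\dots,Y_{n-1}$, with threshold $\varepsilon_0=t_0$ and Levy constant $L''/r$, to obtain $\P\bigl\{\sum_{j=1}^{n-1}Y_j^2\le \varepsilon^2(n-1)\bigr\}\le (CL''\varepsilon/r)^{n-1}$ for every $\varepsilon\ge t_0$. To convert the normalisation from $\sqrt{n-1}$ to the $\sqrt{n}$ appearing in the statement, I would simply observe that $\|A'x\|<\varepsilon\sqrt{n}$ implies $\|A'x\|<\sqrt{2}\,\varepsilon\sqrt{n-1}$ for $n\ge 2$, and absorb the factor $\sqrt{2}$ into the final constant $L_{\text{\tiny\ref{small ball single vector}}}$.

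I do not anticipate a serious obstacle: the argument is a direct concatenation of results already cited in the preceding section. The only bookkeeping point worth emphasising is that the threshold on $\varepsilon$ in the statement precisely reflects the two terms in the Theorem~\ref{small ball probability} bound --- one arising from $1/\LCD_{h,r}(x)$ and one from $\exp(-2(1-\widetilde u)h^2)$ --- so the condition on $\varepsilon$ must be chosen to dominate both simultaneously, which is exactly what the hypothesis does.
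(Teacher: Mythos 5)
Your proposal is correct and follows essentially the same route as the paper's proof: apply Theorem~\ref{small ball probability} to each coordinate $Y_j$ of $A'x$ to obtain a linear L\'evy bound $\Levy(Y_j,t)\le L''t/r$ for $t\ge t_0$, then tensorize over the $n-1$ independent coordinates via the first bullet of Lemma~\ref{l: RV tensorization}. You are in fact a bit more careful than the printed proof on two bookkeeping points the paper glosses over --- reducing to a centered variable (needed to invoke Theorem~\ref{small ball probability} verbatim) and converting $\sqrt{n-1}$ to $\sqrt{n}$ at the cost of a $\sqrt{2}$ absorbed into $L_{\text{\tiny\ref{small ball single vector}}}$.
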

\begin{proof}
Fix any vector $x\in S^{n-1}$ and denote $Y=(Y_1,Y_2,\dots,Y_{n-1}):=A'x$.
Note that, in view of Theorem~\ref{small ball probability}, we have
$$
\Levy\bigl(Y_i,\varepsilon\bigr)
\leq \frac{C_{\text{\tiny\ref{small ball probability}}}\varepsilon}{r\sqrt{1-\widetilde u}}
+C_{\text{\tiny\ref{small ball probability}}} \exp\bigl(-2(1-\widetilde u) h^2\bigr)
\leq\frac{C_{\text{\tiny\ref{small ball probability}}}(1+\widetilde v^{-1})\varepsilon}{r\sqrt{1-\widetilde u}},\quad\quad i\leq n,
$$
for any $\varepsilon$ satisfying conditions of the lemma.
Hence, by Lemma~\ref{l: RV tensorization},
$$\P\Big\{\sum_{i=1}^{n-1} {Y_i}^2\leq \varepsilon^2 (n-1)\Big\}
\leq \Big(\frac{C'(1+\widetilde v^{-1})\varepsilon}{r\sqrt{1-\widetilde u}}\Big)^{n-1}.$$
\end{proof}

The above statement is useful for incompressible vectors:
the following Lemma~\ref{incompressible lcd}
shows that incompressible vectors have $\LCD$ at least of order $\sqrt{n}$.
The lemma is taken from papers \cite{RV square, RV rectangular}, and its proof is included for completeness.
\begin{lemma}[see {\cite[Lemma~3.6]{RV rectangular}}]\label{incompressible lcd}
For every $\theta,\rho\in(0,1)$ there are
$q_{\text{\tiny\ref{incompressible lcd}}}=q_{\text{\tiny\ref{incompressible lcd}}}(\theta,\rho)>0$ and
$r_{\text{\tiny\ref{incompressible lcd}}}=r_{\text{\tiny\ref{incompressible lcd}}}(\theta,\rho)>0$ such that for
every $h>0$ any vector $x\in\Incomp_n(\theta,\rho)$ satisfies
$$\LCD_{h,r_{\text{\tiny\ref{incompressible lcd}}}}(x) \ge q_{\text{\tiny\ref{incompressible lcd}}} \sqrt{n}.$$
\end{lemma}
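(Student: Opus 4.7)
The plan is to show that if $\LCD_{h,r}(x)$ were smaller than an appropriate multiple of $\sqrt{n}$, then the ``spread'' coordinates of $x$ would keep $tx$ uniformly far from $\Z^n$, contradicting the defining inequality of the LCD. Fix $x\in\Incomp_n(\theta,\rho)$ and invoke Lemma~\ref{incompressible are spread} to obtain a set $\sigma\subset\{1,\dots,n\}$ with $|\sigma|\ge\frac{1}{2}\rho^2\theta n$ such that
$$\frac{\rho}{\sqrt{2n}}\le|x_i|\le\frac{1}{\sqrt{\theta n}}\qquad(i\in\sigma).$$

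I would set
$q_{\text{\tiny\ref{incompressible lcd}}}:=\sqrt{\theta}/4$ and $r_{\text{\tiny\ref{incompressible lcd}}}:=\rho^2\sqrt{\theta}/4$,
and argue by contradiction. Assume some $t\in(0,\,q_{\text{\tiny\ref{incompressible lcd}}}\sqrt{n})$ realizes the inequality $\dist(tx,\Z^n)<\min(rt,h)$ defining the LCD (note $\|tx\|=t$, since $x\in S^{n-1}$). For every $i\in\sigma$ the upper bound on $|x_i|$ yields $|tx_i|\le t/\sqrt{\theta n}<q_{\text{\tiny\ref{incompressible lcd}}}/\sqrt{\theta}=1/4<1/2$, so the nearest integer to $tx_i$ is $0$ and $\dist(tx_i,\Z)=|tx_i|$.

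Combining this with the lower bound on $|x_i|$ on $\sigma$,
$$\dist(tx,\Z^n)^2\ge\sum_{i\in\sigma}|tx_i|^2\ge|\sigma|\cdot\frac{t^2\rho^2}{2n}\ge\frac{\rho^4\theta}{4}\,t^2,$$
hence $\dist(tx,\Z^n)\ge\frac{\rho^2\sqrt{\theta}}{2}\,t>r_{\text{\tiny\ref{incompressible lcd}}}\,t\ge\min(rt,h)$, a contradiction. Thus $\LCD_{h,r_{\text{\tiny\ref{incompressible lcd}}}}(x)\ge q_{\text{\tiny\ref{incompressible lcd}}}\sqrt{n}$, as required.

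The only subtle point is the calibration of $q_{\text{\tiny\ref{incompressible lcd}}}$: one must choose it small enough (in terms of $\theta$) to force $|tx_i|<1/2$ on $\sigma$, so that the $\sigma$-coordinates of $tx$ cannot be ``absorbed'' by nonzero integer translates and the Euclidean distance picks them up in full. Once that is in place, the rest is a one-line comparison of $\|\cdot\|_2$ lower bounds. Observe that the parameter $h$ plays no role whatsoever, since $\min(rt,h)\le rt$; in particular, the bound on $\LCD_{h,r}$ is uniform in $h>0$.
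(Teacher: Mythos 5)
Your proof is correct and follows essentially the same strategy as the paper's: invoke the ``spread'' property of incompressible vectors (Lemma~\ref{incompressible are spread}), then lower-bound $\dist(tx,\Z^n)$ using only the spread coordinates, after arguing that on those coordinates the nearest integer is $0$. The one mild simplification you make is worth noting: the paper allows $\lambda x_i$ to be as large as $q/\sqrt{\theta}$ and only controls $|\lambda x_i - p_i|$ on a further subset $\widetilde\sigma$ (deduced from $\|\lambda x - p\|$ being small), intersecting $\sigma\cap\widetilde\sigma$ to conclude $p_i=0$ there; you instead choose $q$ small enough so that $|tx_i|<1/2$ holds on \emph{all} of $\sigma$, making $p_i=0$ automatic and eliminating the auxiliary set. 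This costs a slightly smaller constant ($q=\sqrt{\theta}/4$ vs.\ the paper's $\sqrt{\theta}/3$), which is immaterial since only the dependence on $\theta,\rho$ matters, and buys a cleaner argument. Your observation that $h$ plays no role because $\min(rt,h)\le rt$ is also correct and is implicit in the paper.
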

\begin{proof}
Set $a:=\frac{1}{2}\rho^2\theta$ and $b:=\rho/\sqrt{2}$.
We choose
$r=r_{\text{\tiny\ref{incompressible lcd}}}:=
b \sqrt{\frac{a}{2}}=\frac{1}{2}\rho^2\sqrt{\theta}$
and $q=q_{\text{\tiny\ref{incompressible lcd}}} := \big(1/\sqrt{\theta}
+ \frac{2 r}{a}\big)^{-1}=\sqrt{\theta}/3$.  
   
Let $x\in\Incomp_n(\theta,\rho)$, $h>0$ and assume that
$\LCD_{h,r}(x) < q \sqrt{n}$. Then, by definition of least common denominator,
there exist $p \in \Z^n$ and $\lambda \in (0, q \sqrt{n})$ such that
\begin{equation}\label{lcd}
\|\lambda x - p\| < r \lambda < r q \sqrt{n}=\frac{1}{6}\rho^2\theta\sqrt{n}=\frac{1}{3}a\sqrt{n}.
\end{equation}
It is easy to check that for a vector with such norm the set
$$\widetilde\sigma(x)
:= \big\{i \leq n: | \lambda x_i - p_i| < 2/3 \big\}$$
has a cardinality at least $(1 - \frac{a^2}{4}) n$.
Further, by Lemma \ref{incompressible are spread}, the set of ``spread'' coordinates $\sigma(x)$ has cardinality at least $a n$.
Hence, the set $I(x) := \sigma(x) \cap \widetilde\sigma(x)$ is non-empty, and $|I(x)| > \frac{a}{2}n$.
For any $i\in I(x)$ we have
$$ |p_i| < \lambda |x_i| + \frac{2}{3} < \frac{q}{\sqrt{\theta}}
+ \frac{2 r q}{a} = 1
$$
(in the last step we used our definition of $q$). Since $p \in \Z^n$, we get that $p_i = 0$ for all $i\in I(x)$. 

Finally, due to the definition of $I(x)$ and our choice of $r$, denoting by $P_J$ the coordinate projection on a span $\{i\in J:e_i\}$,
we obtain
$$
\|\lambda x - p\|^2 \ge\| \lambda P_{I}(x)\|^2
> \lambda^2 |I(x)| \frac{\rho^2}{2n}=\lambda^2\frac{\rho^2 a}{4}=(r \lambda)^2,$$
which contradicts \eqref{lcd} and, hence, the assumption that $\LCD_{h,r}(x) < q \sqrt{n}$.
\end{proof}

Let $n\in\N$, $h>0$, $\theta,\rho\in(0,1)$, and
let $q_{\text{\tiny\ref{incompressible lcd}}}$ and $r_{\text{\tiny\ref{incompressible lcd}}}$
be as in the above statement.
Following \cite{RV square}, we consider the ``level sets'' $S_k$ of $\Incomp_n(\theta,\rho)$ defined as
$$S_k=S_k(\theta,\rho,h):= \bigl\{x \in \Incomp_n(\theta,\rho)\,:\,k \le \LCD_{h,r_{\text{\tiny\ref{incompressible lcd}}}} (x) < 2k\bigr\},
\;\;k\ge 0.$$
In the proof of the theorem below we will partition $\Incomp_n(\theta,\rho)$ into subsets of vectors having $\LCD$'s of the same order:
\begin{equation}\label{eq: partition}
\Incomp_n(\theta,\rho)= \bigsqcup_{k = 2^i,  i \geq i_0} S_k,
\end{equation}
where, using Lemma~\ref{incompressible lcd},
we introduce the lower bound $i_0: = \log_2(q_{\text{\tiny\ref{incompressible lcd}}} \sqrt{n}/2)$
(we have $S_k=\emptyset$ for all $k < q_{\text{\tiny\ref{incompressible lcd}}} \sqrt{n}/2$).
Following \cite{RV square}, we are going to combine estimates for individual sets $S_k$.  

A principal observation made in \cite{RV rectangular} and \cite{RV square} is that the sets $S_k$ admit Euclidean $\varepsilon$-nets of relatively small cardinality.
We give both the formal statement and its proof from \cite{RV rectangular} below for the sake of completeness:
\begin{lemma}[{\cite[Lemma~4.8]{RV rectangular}}]\label{cardinality lemma}
For any $\theta,\rho\in(0,1)$ there is $L=L(\theta,\rho)>0$ such that for every $h\ge 1$ and
$k> 0$ the set $S_k$ admits a Euclidean $(4h/k)$-net of
cardinality at most $\bigl(k L/\sqrt{n}\bigr)^n$.
\end{lemma}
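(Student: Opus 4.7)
The plan is to invert the defining property of the LCD in order to list, for each $x \in S_k$, a short explicit formula for a point close to $x$, and then count those formulas. Unfolding the definition, the condition $k \le \LCD_{h, r_{\text{\tiny\ref{incompressible lcd}}}}(x) < 2k$ gives some $t = t(x) \in [k, 2k)$ and $p = p(x) \in \Z^n$ with $\|tx - p\| \le h$, so $\|x - p/t\| \le h/k$ and $\|p\| \le 2k + h$. Thus $p/t$, for $p$ ranging over the integer points in $(2k + h) B_2^n$ and $t$ over the interval $[k, 2k)$, provides a natural ``continuous'' family of approximants; the task is to discretize $t$, control the count, and, if required, round each approximant to an actual point of $S_k$.

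Before doing so I would dispose of the easy regime $h \ge k/2$: there $4h/k \ge 2$ already exceeds the diameter of $S^{n-1}$, so taking a single point of $S_k$ (if nonempty) produces a trivial net, and the stated bound holds since Lemma~\ref{incompressible lcd} gives $k \ge q_{\text{\tiny\ref{incompressible lcd}}}\sqrt n/2$, making $(kL/\sqrt n)^n \ge 1$ for $L$ large. So I may assume $h < k/2$. Then I would place a uniform grid $T \subset [k, 2k)$ of mesh $h$, so $|T| \le 2k/h$, and set
$$\mathcal{N}_0 := \bigl\{p/t' : t' \in T,\; p \in \Z^n \cap (2k + h) B_2^n\bigr\}.$$
Given $x \in S_k$ with associated $t, p$, choose $t' \in T$ with $|t - t'| \le h$; then
$$\|x - p/t'\| \le \|x - p/t\| + \|p\|\,\frac{|t - t'|}{t t'} \le \frac{h}{k} + \frac{(2k + h)h}{k(k - h)} \le \frac{4h}{k},$$
so $\mathcal{N}_0$ is a $(4h/k)$-net on $S_k$ in the ambient space.

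To count $|\mathcal N_0|$ I would invoke the standard volume estimate
$$|\Z^n \cap R B_2^n| \le (R + \sqrt n/2)^n \Vol(B_2^n) \le \bigl(\sqrt{2\pi e/n}\,(R + \sqrt n/2)\bigr)^n,$$
applied with $R = 2k + h \le 3k$. Combining with $|T| \le 2k/h$ and the lower bound $k \ge q_{\text{\tiny\ref{incompressible lcd}}}\sqrt n/2$, I get
$$|\mathcal N_0| \le \frac{2k}{h}\,\Bigl(\tfrac{\sqrt{2\pi e}(3k + \sqrt n/2)}{\sqrt n}\Bigr)^n \le \Bigl(\frac{Lk}{\sqrt n}\Bigr)^n$$
for a suitable $L = L(\theta, \rho)$; the additive $\sqrt n/2$ is absorbed using $k/\sqrt n \ge q_{\text{\tiny\ref{incompressible lcd}}}/2$, and the polynomial prefactor $k/h \le k$ is absorbed into the $n$-th power using the same lower bound on $k/\sqrt n$. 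Finally, if one requires the net to be a subset of $S_k$ in the strict sense of Section~1's convention, I would replace each $\hat y \in \mathcal N_0$ whose distance to $S_k$ does not exceed $4h/k$ by a single actual point of $S_k$ within distance $4h/k$; the cardinality is unchanged and the net constant doubles, after which adjusting $L$ recovers the advertised bound.

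The main technical obstacle is precisely the bookkeeping in the last step: absorbing the polynomial $k/h$ and the additive $\sqrt n$ corrections into a clean exponential of the form $(Lk/\sqrt n)^n$. This hinges entirely on the lower bound $k \gtrsim_{\theta,\rho} \sqrt n$ provided by Lemma~\ref{incompressible lcd}, without which no such compression would be possible; everything else is a routine combination of volumetric integer-point counting and discretization of a one-dimensional parameter.
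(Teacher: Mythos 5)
Your overall strategy — trace back from the LCD to an integer point $p$ and count — is the right one, and coincides with the paper's starting point. But the way you handle the continuous scaling parameter $t\in[k,2k)$ introduces a genuine gap. You discretize $t$ onto a grid $T$ of mesh $\sim h$ and multiply the integer-point count by $|T|\le 2k/h$. You then assert this polynomial prefactor is ``absorbed into the $n$-th power using the lower bound on $k/\sqrt n$.'' That absorption does not go through for general $k$: after using $k/\sqrt n\ge q_{\text{\tiny\ref{incompressible lcd}}}/2$ to bound the volumetric factor by $(C'k/\sqrt n)^n$, what remains is the inequality $2k/h\le (L/C')^n$. Since $h\ge 1$, the best you can say is $2k/h\le 2k$, and $2k\le(L/C')^n$ fails for all $k$ larger than some threshold depending on $n$. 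The lemma is stated for every $k>0$ (and $S_k$ is indeed nonempty for arbitrarily large $k$), so the extra factor is not harmless. It would suffice for the application in Theorem~\ref{random normal}, where $k\le\exp(qn)$, but that is not what the lemma claims. (There is also a minor arithmetic slip: with mesh $h$ and $h<k/2$ your estimate gives $\|x-p/t'\|\le h/k+5h/k=6h/k$, not $4h/k$; this is easily repaired by refining the mesh, but that only worsens $|T|$.)

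The paper's proof avoids the continuous parameter entirely. Having $\|\LCD_{h,r}(x)\,x-p\|\le h$ with $\LCD_{h,r}(x)\ge k$ gives $\|x-p/\LCD_{h,r}(x)\|\le h/k<1/2$, and the elementary planimetric fact that normalizing a point at distance $<1/2$ from $x\in S^{n-1}$ onto the sphere at most doubles the distance yields $\|x-p/\|p\|\,\|\le 2h/k$. The approximant $p/\|p\|$ depends only on the integer vector $p$, so the net $\{p/\|p\|:p\in\Z^n\cap 3kB_2^n\}$ has cardinality exactly the integer-point count $(1+Ck/\sqrt n)^n$, with no extra multiplicative factor; the lower bound $k\gtrsim\sqrt n$ then absorbs the additive $1$ cleanly for all $k$. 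Replacing your grid on $t$ with this normalization step closes the gap.
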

\begin{proof}
In view of Lemma~\ref{incompressible lcd}, we can assume that $k\geq q_{\text{\tiny\ref{incompressible lcd}}} \sqrt{n}/2$.
Further, without loss of generality $\frac{4h}{k} < 2$; otherwise a one-point net works.

Fix for a moment a point $x \in S_k$. Then, by definition of the ``level sets'',
$k \le \LCD_{h,r_{\text{\tiny\ref{incompressible lcd}}}} (x) < 2k$. By definition of LCD, there exists $p=p(x) \in \Z^n$
such that
$$\| \LCD_{h,r_{\text{\tiny\ref{incompressible lcd}}}} (x) \cdot x - p \| \leq h.$$
Hence,
$$\Big\| x - \frac{p}{\LCD_{h,r_{\text{\tiny\ref{incompressible lcd}}}} (x) } \Big\| \leq \frac{h}{k} < \frac{1}{2}.$$
It is a simple planimetric observation that if we normalize the vector
$p/\LCD_{h,r_{\text{\tiny\ref{incompressible lcd}}}} (x)$, the distance to the unit vector $x$ cannot increase more than twice:
$$ \Big\| x - \frac{p}{\|p\|}\Big\| \leq \frac{2h}{k}.$$
Thus, the set
$$\Net_{int}:=\Big\{\frac{p}{\|p\|}:\,p=p(x)\mbox{ for some }x\in S_k\Big\}$$
is a $2h/k$-net for $S_k$.
How many different $p \in \Z^n$ we have to consider?
Note that for any $x\in S_k$, the norm of $p(x)$ cannot be too large:
since $\|x\| = 1$, $\LCD_{h,r_{\text{\tiny\ref{incompressible lcd}}}} (x) < 2k$ and $4h/k < 2$, we get
$$ \|p(x)\| \leq \LCD_{h,r_{\text{\tiny\ref{incompressible lcd}}}} (x) + h < 3k.$$
Hence, all vectors $p\in\Z^n$ in the definition of $\Net_{int}$
belong to the Euclidean ball of radius $3k$ centered at the origin.
Standard volumetric argument shows that there are at most $(1 + Ck/\sqrt{n})^{n}$ integer points in this ball
for a sufficiently large constant $C>0$.
Recall that $k \ge q_{\text{\tiny\ref{incompressible lcd}}} \sqrt{n}/2$, whence
$$|\Net_{int}| \le \Big(1 + \frac{Ck}{\sqrt{n}}\Big)^{n} \le \Bigl(\frac{kL}{\sqrt{n}}\Bigr)^n$$
for an appropriate number $L=L(\theta,\rho)>0$.
The net $\Net_{int}$ does not have to be contained in $S_k$. But, by a standard argument,
we can ``replace'' $\Net_{int}$ with a $4h/k$-net of the same cardinality, and with elements from the set $S_k$.
\end{proof}

Together with Theorem~$A^\star$, the above lemma gives
\begin{lemma}\label{net on level sets}
For any $\theta,\rho\in(0,1)$ there is $L_{\text{\tiny\ref{net on level sets}}}=L_{\text{\tiny\ref{net on level sets}}}(\theta,\rho)\geq 1$
such that for every $h\ge 1$ and $k> 0$ there is a finite subset $\Net\subset S_k$
of cardinality at most $\bigl(k L_{\text{\tiny\ref{net on level sets}}}/\sqrt{n}\bigr)^n$ with the following property.
The event
$$\bigl\{\mbox{For every $y\in S_k$ there is $y'=y'(y)\in\Net$ such that $\|A(y-y')\|
\le h L_{\text{\tiny\ref{net on level sets}}}\sqrt{n}/k$}\bigr\}$$
has probability at least $1-4\exp(-n/32)$.
\end{lemma}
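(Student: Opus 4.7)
The plan is to combine the two tools directly at hand: Lemma~\ref{cardinality lemma} produces a plain Euclidean net on $S_k$ of the right cardinality, and Theorem~$A^\star$ inflates this net, at the cost of a multiplicative factor of the form $\exp(c\delta n\log(e/\delta))$ in the cardinality, into a net with respect to the pseudometric $d(x,y)=\|A(x-y)\|$ with a controlled radius. The parameter $\delta$ in Theorem~$A^\star$ will be chosen as a small fixed constant independent of $h, k, n$, and the probability estimate in our lemma ($1-4\exp(-n/32)$) dictates $\delta=1/4$.

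More precisely, I would first use Lemma~\ref{cardinality lemma} to obtain a Euclidean $(4h/k)$-net $\Net_0\subset S_k$ of cardinality at most $(kL(\theta,\rho)/\sqrt{n})^n$. Under the (harmless) assumption $4h/k\le 1/2$, Theorem~$A^\star$ applied with $\delta=1/4$, $\varepsilon=4h/k$, and $S=S_k$ produces a deterministic subset $\Net\subset S_k$ of cardinality at most
$$
\exp\bigl(\tfrac{13}{4}n\ln(8e)\bigr)\cdot\bigl(kL(\theta,\rho)/\sqrt{n}\bigr)^n
\le \bigl(kL_{\text{\tiny\ref{net on level sets}}}/\sqrt{n}\bigr)^n
$$
(absorbing the exponential factor by enlarging $L_{\text{\tiny\ref{net on level sets}}}$) which, with probability at least $1-4\exp(-\delta n/8)=1-4\exp(-n/32)$, is a $(4h/k\cdot 4C_\star)\sqrt{n}$-net on $S_k$ in the pseudometric $d(x,y)=\|A(x-y)\|$. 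Taking $L_{\text{\tiny\ref{net on level sets}}}$ large enough to dominate $16C_\star$ as well, this radius is at most $hL_{\text{\tiny\ref{net on level sets}}}\sqrt{n}/k$, which is exactly the required bound.

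The one minor obstruction is the range of $\varepsilon$ imposed by Theorem~$A^\star$: we need $\varepsilon=4h/k\in(0,1/2]$. When $4h/k>1/2$, i.e.\ $k<8h$, I would apply Theorem~$A^\star$ with $\varepsilon=1/2$ starting from any standard $(1/2)$-net on $S^{n-1}$ of cardinality at most $5^n$; since in this regime $k/\sqrt{n}$ is bounded below by a positive constant depending only on $\theta,\rho$ (using $k\ge q_{\text{\tiny\ref{incompressible lcd}}}\sqrt{n}/2$ from Lemma~\ref{incompressible lcd}), both the cardinality bound $(kL_{\text{\tiny\ref{net on level sets}}}/\sqrt{n})^n$ and the radius bound $hL_{\text{\tiny\ref{net on level sets}}}\sqrt{n}/k\gtrsim \sqrt{n}$ are automatically satisfied by adjusting $L_{\text{\tiny\ref{net on level sets}}}$. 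I expect no essential difficulty beyond bookkeeping of constants; the real content of the statement comes from the two previously proved results and their compatibility.
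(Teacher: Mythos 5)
Your proposal is correct and takes essentially the same route as the paper, which (after stating Lemma~\ref{cardinality lemma}) says only that Lemma~\ref{net on level sets} follows ``together with Theorem~$A^\star$'': start from the Euclidean $(4h/k)$-net on $S_k$ of cardinality $(kL/\sqrt{n})^n$ provided by Lemma~\ref{cardinality lemma}, refine it via Theorem~$A^\star$ with $\delta=1/4$ so that the probability becomes $1-4\exp(-n/32)$, and absorb the factor $\exp\bigl(\tfrac{13}{4}n\ln(8e)\bigr)$ and the constant $16C_\star$ into the enlarged $L_{\text{\tiny\ref{net on level sets}}}$. Your handling of the edge case $4h/k>1/2$ (using a $1/2$-net on $S_k$ of cardinality $\le 9^n$, say, and noting $k/\sqrt{n}$ is bounded below by $q_{\text{\tiny\ref{incompressible lcd}}}/2$ whenever $S_k\ne\emptyset$) is a correct bookkeeping fix; just take care that the net used in Theorem~$A^\star$ must itself lie in $S_k$, so first take a finer net on $S^{n-1}$ and snap its points to $S_k$.
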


Now, we can prove
\begin{theor}\label{random normal}
Let $\alpha$ be a centered random variable of unit variance such that
$\Levy(\alpha,\widetilde v)\le\widetilde u$ for some $\widetilde v>0$ and $\widetilde u\in(0,1)$.
Then there exist $q,s,w,r> 0$ depending only on
$\widetilde v,\widetilde u$ with the following property: let
$X^1, X^2\dots, X^{n-1}$ be random $n$-dimensional vectors whose coordinates are jointly independent copies of $\alpha$.
Consider any random unit vector $X^*$ orthogonal to $\{X^1,X^2,\dots,X^{n-1}\}$.
Then
$$\P\big\{\LCD_{s\sqrt{n},r}(X^*)<\exp(q n)\big\} \le 2\exp(-w n).$$
\end{theor}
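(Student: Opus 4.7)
The plan is to prove Theorem~\ref{random normal} following the standard Rudelson--Vershynin scheme, with the crucial modification that the nets on the level sets are produced by Lemma~\ref{net on level sets} (which uses Theorem~A) rather than by a direct volumetric argument combined with the operator norm bound $\|A\|_{2\to 2}\le C\sqrt{n}$, since in our heavy-tailed setting no such bound is available. Write $A'$ for the $(n-1)\times n$ matrix with rows $X^1,\dots,X^{n-1}$, so that $A'X^*=0$ deterministically.

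First, I would reduce to incompressible vectors. By (the $(n-1)\times n$ version of) Proposition~\ref{compressible prop} noted in Remark~\ref{n to n minus 1 rem}, we may pick $\theta,\rho\in(0,1)$ depending only on $\widetilde v,\widetilde u$ such that the event $\Event_0:=\{\inf_{y\in\Comp_n(\theta,\rho)}\|A'y\|\geq v\sqrt n\}$ has probability at least $1-5u_0^n$ for some constants $v>0$, $u_0\in(0,1)$. On $\Event_0$, since $\|A'X^*\|=0$, the vector $X^*$ must lie in $\Incomp_n(\theta,\rho)$. Fix these $\theta,\rho$ and set $r:=r_{\text{\tiny\ref{incompressible lcd}}}(\theta,\rho)$, $q_0:=q_{\text{\tiny\ref{incompressible lcd}}}(\theta,\rho)$ as in Lemma~\ref{incompressible lcd}, and pick $h:=s\sqrt n$ with a small $s=s(\widetilde v,\widetilde u)>0$ to be tuned. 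With these choices it then suffices, by the partition \eqref{eq: partition}, to bound
$$\P\bigl\{X^*\in S_k\bigr\}\;\;\;\mbox{for all dyadic }k\in\bigl[q_0\sqrt n/2,\;\exp(qn)\bigr],$$
where $S_k=S_k(\theta,\rho,h)$, and then take a union bound over $O(n)$ dyadic scales.

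To control a single scale, I would apply Lemma~\ref{net on level sets}: with probability at least $1-4\exp(-n/32)$ there is a set $\Net_k\subset S_k$ of cardinality at most $(kL_{\text{\tiny\ref{net on level sets}}}/\sqrt n)^n$ such that every $y\in S_k$ has a point $y'\in\Net_k$ with $\|A'(y-y')\|\le hL_{\text{\tiny\ref{net on level sets}}}\sqrt n/k$. On the intersection of this event with $\{X^*\in S_k\}$, there is $y'\in\Net_k$ with $\|A'y'\|=\|A'(y'-X^*)\|\le hL_{\text{\tiny\ref{net on level sets}}}\sqrt n/k$. A union bound over $\Net_k$ combined with Lemma~\ref{small ball single vector} (applied with $\varepsilon:=hL_{\text{\tiny\ref{net on level sets}}}/k$, which satisfies the required lower bounds since $y'\in S_k$ gives $\LCD\ge k$, and since $\varepsilon\gtrsim s/k$ dominates $\widetilde v\exp(-2(1-\widetilde u)s^2 n)$ as long as $k\le\exp(qn)$ with $q<2(1-\widetilde u)s^2$) then yields
$$\P\bigl\{X^*\in S_k,\;\mbox{net property holds}\bigr\}\le \Bigl(\frac{kL_{\text{\tiny\ref{net on level sets}}}}{\sqrt n}\Bigr)^n\Bigl(\frac{L_{\text{\tiny\ref{small ball single vector}}} h L_{\text{\tiny\ref{net on level sets}}}}{rk}\Bigr)^{n-1}\le C\cdot k\cdot\Bigl(\frac{Cs}{r}\Bigr)^{n-1}$$
after the telescoping cancellation $(k)^n(1/k)^{n-1}=k$ and using $h=s\sqrt n$. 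For $k\le\exp(qn)$ this is at most $\exp(qn)(C's/r)^{n-1}$, which is exponentially small once $s$ is chosen so that $C's/r\le\tfrac12$, say, and then $q$ is taken small enough (and smaller than $2(1-\widetilde u)s^2$, enforcing the second condition of Lemma~\ref{small ball single vector}).

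Finally, summing over the $O(n)$ dyadic values of $k$ up to $\exp(qn)$, combining with the $4\exp(-n/32)$ failure probability of the nets on each level set, and adding the $5u_0^n$ bound from the compressible step, yields a total bound of the form $2\exp(-wn)$ for some $w=w(\widetilde v,\widetilde u)>0$. The main subtle point, and the step I expect to be the real obstacle, is keeping the parameters $(s,q,r,w)$ mutually consistent: $s$ must be small enough so that $C's/r$ beats the net cardinality exponent, $q$ must be strictly smaller than $2(1-\widetilde u)s^2$ so that Lemma~\ref{small ball single vector} applies throughout the range, and at the same time all constants must depend only on $\widetilde v,\widetilde u$. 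Once this tuning is done, the union over dyadic $k$ and the addition of the compressible-event failure give the stated estimate.
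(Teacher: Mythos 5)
Your proposal follows exactly the same route as the paper: reduce to incompressible vectors via Proposition~\ref{compressible prop}/Remark~\ref{n to n minus 1 rem}, fix $r$, $h=s\sqrt n$ via Lemma~\ref{incompressible lcd}, decompose $\Incomp$ into the dyadic level sets $S_k$, use Lemma~\ref{net on level sets} (i.e., Theorem~$A^\star$) to refine the nets, union-bound with Lemma~\ref{small ball single vector}, exploit the telescoping $(k/\sqrt n)^n(\sqrt n/k)^{n-1}=k/\sqrt n$, and sum over the $O(n)$ scales. The only differences are cosmetic: you carry a slightly different intermediate $\varepsilon$ and leave the choices of $s$ and $q$ implicit as constraints (rather than fixing $s:=\widetilde v r/(4L_{\text{\tiny\ref{net on level sets}}}^2 L_{\text{\tiny\ref{small ball single vector}}})$ and $q:=2s^2(1-\widetilde u)$ outright as the paper does), but the constraints you list are precisely the ones the paper's choices satisfy.
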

\begin{proof}
Without loss of generality, we can assume that $n$ is a large number and that $\widetilde v\leq 1$.
Denote by $A'$ the $n-1\times n$ matrix with rows $X^1,X^2,\dots,X^{n-1}$. Then, by the definition of $X^*$, we have $A'X^*=0$
almost surely.
Let $\theta_{\text{\tiny\ref{compressible prop}}}$ and $u_{\text{\tiny\ref{compressible prop}}}$
be defined as in Remark~\ref{n to n minus 1 rem} (with $A'$ replacing $A$).
Then, by Proposition~\ref{compressible prop} and Remark~\ref{n to n minus 1 rem}, we have
$$\P\bigl\{X^*\in\Comp_n(\theta_{\text{\tiny\ref{compressible prop}}},
\theta_{\text{\tiny\ref{compressible prop}}})\bigr\}\le 5{u_{\text{\tiny\ref{compressible prop}}}}^n \leq \exp(-w n)
$$
for $w>0$ such that, say,
$\exp(-2w) > u_{\text{\tiny\ref{compressible prop}}}$, and provided that $n$ is large. Thus, it is enough to prove that
$$\P\big\{\LCD_{s\sqrt{n},r}(X^*)<\exp(q n), X^* \in \Incomp_n(\theta_{\text{\tiny\ref{compressible prop}}},
\theta_{\text{\tiny\ref{compressible prop}}}) \big\} \le \exp(-w n)$$
for small enough $r,w,s,q$ depending only on $\widetilde v,\widetilde u$.
We start by defining $r:=r_{\text{\tiny\ref{incompressible lcd}}}
(\theta_{\text{\tiny\ref{compressible prop}}},\theta_{\text{\tiny\ref{compressible prop}}})$.
Note that, by Lemma~\ref{incompressible lcd}, we have
$$\Incomp_n(\theta_{\text{\tiny\ref{compressible prop}}},
\theta_{\text{\tiny\ref{compressible prop}}})\subset \big\{x\in S^{n-1}:\,\LCD_{s\sqrt{n},r}(x)
\geq q_{\text{\tiny\ref{incompressible lcd}}}\sqrt{n}\big\}$$
for {\it any} $s>0$, and, in particular for $s$ defined by
$s:=\frac{\widetilde v r}{4L^2_{\text{\tiny\ref{net on level sets}}}L_{\text{\tiny\ref{small ball single vector}}}}$,
where $L_{\text{\tiny\ref{net on level sets}}}=L_{\text{\tiny\ref{net on level sets}}}(\theta_{\text{\tiny\ref{compressible prop}}},
\theta_{\text{\tiny\ref{compressible prop}}})$ and $L_{\text{\tiny\ref{small ball single vector}}}$
are taken from Lemmas~\ref{net on level sets} and~\ref{small ball single vector}, respectively,
and $q_{\text{\tiny\ref{incompressible lcd}}}=q_{\text{\tiny\ref{incompressible lcd}}}
(\theta_{\text{\tiny\ref{compressible prop}}},\theta_{\text{\tiny\ref{compressible prop}}})$.
Let us emphasize that no vicious cycle is created here in regard to interdependence between $s$ and $r$.
Finally, we let $q:=2s^2(1-\widetilde u)$ ($w$ will be defined at the very end of the proof).

We will make use of representation \eqref{eq: partition} of the set
$\Incomp_n(\theta_{\text{\tiny\ref{compressible prop}}},\theta_{\text{\tiny\ref{compressible prop}}})$.
Denote
$$\K :=  \big\{2^i: \;i\in[\log_2(q_{\text{\tiny\ref{incompressible lcd}}}\sqrt{n})-1,qn/\ln 2]\cap \N\big\}.$$
Then, in view of Lemma~\ref{incompressible lcd}, we have
$$\big\{x\in \Incomp_n(\theta_{\text{\tiny\ref{compressible prop}}},\theta_{\text{\tiny\ref{compressible prop}}}):\,
\LCD_{s\sqrt{n},r}(x)<\exp(q n)\big\}\subset\bigsqcup_{k\in \K}S_k.$$
It is sufficient to prove that 
\begin{equation}\label{on_one_level}
\P\{X^* \in S_k \} \le 5\exp(-n/32) \quad \text{ for all } k \in \K.
\end{equation}
Indeed, since $|\K| < qn$, the union bound over $\K$ will conclude the theorem.

In turn, \eqref{on_one_level} will follow as long as we show that
$$\P\{A'x=0\mbox{ for some }x\in S_k \} \le 5\exp(-n/32) \quad \text{ for all } k \in \K.$$

Fix for a moment any $k\in\K$ and let 
$\Net_k$ be the subset of $S_k$ of cardinality at most $(kL_{\text{\tiny\ref{net on level sets}}}/\sqrt{n})^n$,
constructed in Lemma~\ref{net on level sets} (with $h:=s\sqrt{n}$).
Further, take $\eps := \frac{\widetilde v r \sqrt{n}}
{2k L_{\text{\tiny\ref{net on level sets}}}L_{\text{\tiny\ref{small ball single vector}}}}$.
Note that, in view of the definition of $q$ and $\K$, we have
$k\leq \exp(2s^2(1-\widetilde u)n)$. Hence, for
$n$ large enough,
$\eps$ satisfies the condition of Lemma~\ref{small ball single vector}:
$$\varepsilon\geq\widetilde v \cdot \max\Bigl(\frac{1}{k}, \exp\big(-2s^2(1-\widetilde u)n\big)\Bigr)
\geq\widetilde v \cdot \max\Bigl(\frac{1}{\LCD_{h,r}(x)}, \exp(-2(1-\widetilde u) h^2)\Bigr).$$
Hence, 
\begin{align*}
\P\big\{\|A'y\|\ge \varepsilon\sqrt{n}\mbox{ for all }y\in\Net_k\big\}
&\ge 1-|\Net_k|(L_{\text{\tiny\ref{small ball single vector}}}\varepsilon/r)^{n-1}\\
&\ge 1-\Bigl(\frac{k L_{\text{\tiny\ref{net on level sets}}}}{\sqrt{n}}\Bigr)^n
\Big(\frac{L_{\text{\tiny\ref{small ball single vector}}}\varepsilon}{r}\Big)^{n-1}\\
& \ge 1 - \frac{k L_{\text{\tiny\ref{net on level sets}}}}{\sqrt{n}} \cdot
\Bigl(\frac{\widetilde v}{2}\Bigr)^{n-1} \\
&\ge 1 - 2^{-n}\exp(2s^2(1-\widetilde u)n),
\end{align*}
where the last relation follows by the assumption $\widetilde v\leq 1$.
Finally, note that, since $s\leq 1/4$, the last quantity is bounded from below by $1-2^{-n/2}$.
Applying the definition of $\Net_k$ in Lemma~\ref{net on level sets} and noticing that
$hL_{\text{\tiny\ref{net on level sets}}}\sqrt{n}/k\leq \varepsilon\sqrt{n}/2$, we get
$$\P\big\{\|A'y\|\ge \varepsilon\sqrt{n}/2\mbox{ for all }y\in S_k\big\}
\geq 1-4\exp(-n/32)-2^{-n/2}\geq 1-5\exp(-n/32).$$
This proves \eqref{on_one_level} and implies the result.
\end{proof}

\begin{proof}[Proof of Theorem~B]
Without loss of generality, the dimension $n$ is large.
Let $A=(a_{ij})$ be an $n\times n$ random matrix with i.i.d.\ centered entries with unit variance
such that for some $\widetilde v>0$ and $\widetilde u\in(0,1)$ we have $\Levy(a_{ij},\widetilde v)\leq\widetilde u$.
We define $\theta:=\theta_{\text{\tiny\ref{compressible prop}}}(\widetilde v,\widetilde u)$
and $v:=v_{\text{\tiny\ref{compressible prop}}}(\widetilde v,\widetilde u)$,
where $\theta_{\text{\tiny\ref{compressible prop}}},v_{\text{\tiny\ref{compressible prop}}}$
are taken from Proposition~\ref{compressible prop}, and let
$q,s,w,r$ be as in Theorem~\ref{random normal} (with respect to $\widetilde v,\widetilde u$).
We will prove a small ball probability bound for $s_n(A)$.

It is sufficient to consider the parameter domain
$\varepsilon\in\big(\theta\widetilde v\exp(-qn),1\big]$.
We have
\begin{align*}
\P\bigl\{s_n(A)
<\varepsilon n^{-1/2}\bigr\}
&\leq
\P\bigl\{\inf\limits_{y\in\Comp_n(\theta,\theta)}\|Ay\|
<v\sqrt{n}\bigr\}
+\P\bigl\{\inf\limits_{y\in\Incomp_n(\theta,\theta)}\|Ay\|
<\varepsilon n^{-1/2}\bigr\}\\
&\leq 5{u_{\text{\tiny\ref{compressible prop}}}}^n
+\P\bigl\{\inf\limits_{y\in\Incomp_n(\theta,\theta)}\|Ay\|
<\varepsilon n^{-1/2}\bigr\},
\end{align*}
where we have applied Proposition~\ref{compressible prop}.
Further, by Proposition~\ref{invertibility via distance}, we have
$$\P\bigl\{\inf\limits_{y\in\Incomp_n(\theta,\theta)}\|Ay\|
<\varepsilon n^{-1/2}\bigr\}\leq \frac{1}{\theta}
\P\Bigl\{\Bigl|\sum\limits_{i=1}^n X_i^*a_{in}\Bigr| < \frac{\varepsilon}{\theta}\Bigr\},
$$
where $X^*$ denotes a random unit normal vector to the span of the first $n-1$ columns of $A$.
In view of Theorem~\ref{small ball probability}, this last relation implies
\begin{align*}
\P\bigl\{\inf\limits_{y\in\Incomp_n(\theta,\theta)}\|Ay\|
<\varepsilon n^{-1/2}\bigr\}&\leq
\theta^{-1}\P\bigl\{\LCD_{s\sqrt{n},r}(X^*)<\theta\widetilde v \varepsilon^{-1}\bigr\}\\
&+\frac{C_{\text{\tiny\ref{small ball probability}}}\varepsilon}{\theta\widetilde v r\sqrt{1-\widetilde u}}
+C_{\text{\tiny\ref{small ball probability}}} \exp\bigl(-2s^2(1-\widetilde u) n\bigr).
\end{align*}
Finally, noticing that $\theta\widetilde v \varepsilon^{-1}\leq \exp(qn)$
and applying Theorem~\ref{random normal},
we get
$$\P\bigl\{\inf\limits_{y\in\Incomp_n(\theta,\theta)}\|Ay\|
<\varepsilon n^{-1/2}\bigr\}\leq 2\theta^{-1}\exp(-wn)+\frac{C_{\text{\tiny\ref{small ball probability}}}\varepsilon}{\theta\widetilde v r\sqrt{1-\widetilde u}}
+C_{\text{\tiny\ref{small ball probability}}} \exp\bigl(-2s^2(1-\widetilde u) n\bigr).$$
Together with an estimate for the compressible vectors, this implies the result.
\end{proof}

\section{Acknowledgements}

We would like to thank N. Tomczak-Jaegermann and
R.~Vershynin for valuable suggestions that helped improve the presentation of the work.
The second named author is grateful to A.~Litvak for inspiring discussions.
Both authors are indebted to the referee for very useful remarks and suggestions.

\address


\begin{thebibliography}{99}

\bibitem{ALPT}{R. Adamczak, A. E. Litvak, A. Pajor, N. Tomczak-Jaegermann,
Quantitative estimates of the convergence of the empirical covariance matrix in Log-concave Ensembles,
Journal of AMS, 234 (2010), 535--561.}

\bibitem{ALPT2}{R. Adamczak, A. E. Litvak, A. Pajor, N. Tomczak-Jaegermann,
Sharp bounds on the rate of convergence of empirical covariance matrix,
C.R. Math. Acad. Sci. Paris, 349 (2011), 195--200.}

\bibitem{BY}
{
Z. D. Bai\ and\ Y. Q. Yin, Limit of the smallest eigenvalue of a large-dimensional sample covariance matrix, Ann. Probab. 21 (1993), no.~3, 1275--1294.
}

\bibitem{Edelman}
{
A. Edelman, Eigenvalues and condition numbers of random matrices, SIAM J. Matrix Anal. Appl. 9 (1988), no.~4, 543--560. 
}

\bibitem{GLPT}{O. Guedon, A. E. Litvak, A. Pajor, N. Tomczak-Jaegermann,
On the interval of fluctuation of the singular values of random matrices,
Journal of the European Mathematical Society, to appear.}

\bibitem{GNT}{F. G\"otze, A. Naumov, and A. Tikhomirov.
On minimal singular values of random matrices with correlated entries.
Random Matrices:
Theory Appl. 04, 1550006 (2015), DOI: 10.1142/S2010326315500069}

\bibitem{Hoeffding}{W.~Hoeffding, Probability inequalities for sums of bounded random variables,
Journal of the American Statistical Association 58 (1963), No.~301, 13--30.}

\bibitem{KM}{V. Koltchinskii; S. Mendelson,
Bounding the Smallest Singular Value of a Random Matrix Without Concentration,
Int Math Res Notices (2015), doi: 10.1093/imrn/rnv096.}

\bibitem{Latala}{R.~Latala, Some estimates of norms of random matrices, Proc. Amer. Math. Soc. 133 (2005), 1273--1282.}

\bibitem{LV}{C. Le, R. Vershynin, Concentration and regularization of random graphs, preprint. arXiv:1506.00669.}

\bibitem{LPRT}{Litvak,~A. E.; Pajor,~A.; Rudelson,~M.; Tomczak-Jaegermann,~N.
Smallest singular value of random matrices and geometry of random polytopes. Adv. Math. 195 (2005), no. 2, 491--523.}

\bibitem{MP}{S. Mendelson; G. Paouris,
On the singular values of random matrices, Journal of EMS, 16 (2014), no 4, 823--834.}

\bibitem{MS}{V.D.~Milman, G.~Schechtman, Asymptotic theory of finite-dimensional normed spaces, Lecture Notes
in Math., vol. 1200, Springer, Berlin, 1986.}

\bibitem{NR}{Hoi H. Nguyen and Sean O'Rourke. The Elliptic Law.
Int Math Res Notices, 2014, doi:10.1093/imrn/rnu174}

\bibitem{RW}{Rempala, Grzegorz; Wesolowski, Jacek.
Asymptotics for products of independent sums with an application to Wishart determinants.
Statist. Probab. Lett. 74 (2005), no. 2, 129--138.}

\bibitem{Rogozin}{Rogozin, B. A. On the increase of dispersion of sums of independent random variables.
(Russian) Teor. Verojatnost. i Primenen 6 (1961), 106--108.}

\bibitem{Rudelson}{M. Rudelson, Lecture notes on non-aymptotic random matrix theory,
Notes from the AMS Short Course on Random Matrices, 2013.}

\bibitem{RV congress}{Rudelson,~M.; Vershynin,~R. Non-asymptotic theory of random matrices: extreme singular values.
Proceedings of the International Congress of Mathematicians. Volume III, 1576--1602, Hindustan Book Agency, New Delhi, 2010.}

\bibitem{RV rectangular}{Rudelson,~M.; Vershynin,~R. Smallest singular value of a random rectangular matrix.
Comm. Pure Appl. Math. 62 (2009), no. 12, 1707--1739.}

\bibitem{RV square}{Rudelson,~M.; Vershynin,~R. The Littlewood-Offord problem and invertibility of random matrices.
Adv. Math. 218 (2008), no. 2, 600--633.}

\bibitem{SV}{N. Srivastava, R. Vershynin, Covariance estimation for distributions with $2+\varepsilon$ moments,
Annals of Probability 41 (2013), 3081--3111.}

\bibitem{TV inverse LO}{T. Tao, V. Vu,
Inverse Littlewood-Offord theorems and the condition number of
random discrete matrices, Ann. Math., 169 (2009), 595--632.}

\bibitem{Tao Vu Circular law}{T. Tao, V. Vu, Random matrices: the circular law, Commun. Contemp. Math. 10
(2008), no.~2, 261--307.}

\bibitem{TV distribution of smin}
{
T. Tao\ and\ V. Vu, Random matrices: the distribution of the smallest singular values,
Geom. Funct. Anal. 20 (2010), no.~1, 260--297.
}

\bibitem{TV smooth analysis}{Tao,~T.; Vu,~V. Smooth analysis of the condition number and the least singular value.
Math. Comp. 79 (2010), no. 272, 2333--2352.}

\bibitem{Thorisson}
{
H. Thorisson, {\it Coupling, stationarity, and regeneration}, Probability and its Applications (New York), Springer, New York, 2000.
}

\bibitem{T limit}{K. Tikhomirov, The limit of the smallest singular value of random matrices with i.i.d. entries,
Adv. Math. 284 (2015), 1--20.}

\bibitem{Tikhomirov}{K. E. Tikhomirov, The smallest singular value of random rectangular matrices with
no moment assumptions on entries, Israel J. Math. 212 (2016), no.~1, 289--314.}

\bibitem{Vershynin}{Vershynin,~R. Introduction to the non-asymptotic analysis of random matrices. In:
Compressed Sensing: Theory and Applications, Yonina Eldar and Gitta Kutyniok
(eds), 210--268, Cambridge University Press, 2012.}

\bibitem{Yas14}{P. Yaskov, Lower bounds on the smallest eigenvalue of a sample covariance matrix,
Electron. Commun. Probab. 19 (2014), no. 83, 1--10.}

\bibitem{YBK}{Y. Q. Yin, Z. D. Bai\ and\ P. R. Krishnaiah, On the limit of the largest eigenvalue
of the large-dimensional sample covariance matrix,
Probab. Theory Related Fields {\bf 78} (1988), no.~4, 509--521.}

\end{thebibliography}
\end{document}